\documentclass[12pt]{amsart}

\usepackage{fullpage}
\usepackage{amsmath}
\usepackage{amssymb}
\usepackage{amsthm}
\usepackage{amstext}
\usepackage{yhmath}
\usepackage{appendix}
\usepackage{perpage}
\usepackage{slashed}
\usepackage[x11names]{xcolor}
\usepackage{tikz-cd}
\usepackage{ytableau}\ytableausetup{centertableaux}
\usepackage[colorlinks=true, allcolors=blue]{hyperref}
\usepackage{todonotes}

\newtheorem{thm}{Theorem}[section]
\newtheorem{lemm}[thm]{Lemma}
\newtheorem{coro}[thm]{Corollary}
\newtheorem{prop}[thm]{Proposition}
\newtheorem{conj}[thm]{Conjecture}

\newtheorem*{thmMain}{Theorem}

\theoremstyle{definition}
\newtheorem{expl}[thm]{Example}
\newtheorem{defi}[thm]{Definition}
\newtheorem{remark}[thm]{Remark}

\begin{document}

\title{Jordan induction for $\mathrm{GL}_n(\mathcal{O})$}

\author{Zhe Chen}

\address{Department of Mathematics, Shantou University, Shantou, 515821, China}
\email{zhechencz@gmail.com}

\begin{abstract}
Let $\mathcal{O}$ be a complete discrete valuation ring with a finite residue field. We introduce a simple framework of constructing smooth representations of $\mathrm{GL}_n(\mathcal{O})$ modulo the knowledge of nilpotent orbits, called {Jordan induction}, which enjoys several remarkable properties: It yields only \emph{irreducible} representations, it yields \emph{all} the even level irreducible representations, and up to conjugation it yields \emph{non-isomorphic} representations.

In the even level case, this gives an explicit realisation of Hill's analogue of Lusztig's Jordan decomposition, as well as a vast generalisation of G\'erardin's construction for $\mathrm{GL}_n(\mathcal{O})$. As a simple application, we construct an explicit section to the orbit map. We also propose a conjecture linking Jordan induction and Lusztig induction, aiming at generalising the algebraisation theorem obtained in recent joint works with Stasinski.
\end{abstract}

\maketitle

\tableofcontents

\section{Introduction}

Let $\mathcal{O}$ be a complete discrete valuation ring with finite residue field $\mathbb{F}_q$. The representation theory of $\mathrm{GL}_n(\mathcal{O})$, like that of any group, has two fundamental problems: classifying the irreducible representations and explicitly constructing them. We shall be concerned with the latter one.

\vspace{2mm} Let $\pi$ be a fixed uniformiser  of $\mathcal{O}$. Then every smooth irreducible representation of $\mathrm{GL}_n(\mathcal{O})$ factors through a finite quotient $\mathrm{GL}_n(\mathcal{O}_r)$, where $\mathcal{O}_r:=\mathcal{O}/\pi^r$. 

\vspace{2mm} For $r=1$, we have $\mathrm{GL}_n(\mathcal{O}_r)=\mathrm{GL}_n(\mathbb{F}_q)$, whose representation theory has been extensively studied, and in 1955 Green \cite{Green_1955} obtained all its irreducible characters. In 1977 Lusztig and Srinivasan \cite{Lusztig_Srinivasan_char_finite_unitary_gp_1977} constructed the corresponding representation spaces using Deligne--Lusztig theory \cite{DL1976}. Recently, Jing and Wu \cite{Jing_Wu_2024_J_Alg} initiated an alternative approach based on vertex operator algebras. For general connected reductive groups over $\mathbb{F}_q$, Lusztig established the Jordan decomposition of irreducible characters in \cite{Lusztig_84_OrangeBook,Lusztig_1988_Rep_red_gp_disconnectedcentre}, a cornerstone of their representation theory. For more details and results we refer the interested reader to \cite{Carter1993FiGrLieTy,DM_book_2nd_edition,Geck_Malle_2020book}. 

\vspace{2mm} For $r>1$, the study of representations of $\mathrm{GL}_n(\mathcal{O}_r)$, and of the closely related $\mathrm{SL}_n(\mathcal{O}_r)$, has attracted considerable attention over the past decades: There are cohomological approaches towards generalising Deligne--Lusztig theory \cite{Lusztig2004RepsFinRings,Sta2009Unramified,Sta2011ExtendedDL,Chen_2019_flag_orbit}, algebraic/arithmetic approaches based on Clifford theory and zeta functions \cite{Hill_1995_Regular,AvniKlopschOnnVoll_2016_similarity,Stasinski_Stevens_2016_regularRep,Krakovski_Onn_Singla_regularchar_2018,Hasa_Stasinski_2019_trans_AMS,Hassain_Singla_2022_ADV,OnnPrasadSingla_2025_zetaA2poschar}, and analytic approaches using theta functions/modular forms \cite{Nobs_Wolfart_theta_I_1974,Wolfart_theta_II_1975,Chen_Feng_2025_classnumber_InvariantCharacters}, just to name a few; more developments can be found in the survey \cite{Stasinski_2016_survey}. In the present work we introduce a new framework of constructing representations based on the idea of Jordan decomposition; this framework, on the one hand, is similar to Harish-Chandra theory, and on the other hand has a natural connection with $\ell$-adic cohomology.

\vspace{2mm} Throughout the remaining part of the paper we assume $r>1$. We make no assumption on $p:=\mathrm{char}(\mathbb{F}_q)$.

\vspace{2mm}  In the 1990s, in a series of inspiring works \cite{Hill_1993_Jordan,Hill_1994_nilpotent,Hill_1995_Regular,Hill_1995_semisimple} Hill found an analogue of Lusztig's Jordan decomposition for $\mathrm{GL}_n(\mathcal{O}_r)$ and studied $\mathrm{Irr}(\mathrm{GL}_n(\mathcal{O}_r))$ based on it, with aids from Clifford theory and adjoint orbits. To describe it in a more precise way, for every semisimple $s\in \mathfrak{gl}_n(\mathbb{F}_q) = M_n(\mathbb{F}_q)$, let $\mathrm{Irr}_{s}(\mathrm{GL}_n(\mathcal{O}_r))$ denote the set of equivalence classes of irreducible representations whose orbits (see Definition~\ref{defi:orbit type}) contain an element with $s$ being the semisimple part. Then there is a natural partition 
$$\mathrm{Irr}(\mathrm{GL}_n(\mathcal{O}_r))=\bigsqcup_{(s)}\mathrm{Irr}_{s}(\mathrm{GL}_n(\mathcal{O}_r))$$
where $(s)$ runs over the semisimple conjugacy classes in $M_n(\mathbb{F}_q)$. Hill's version of Jordan decomposition \cite[2.13~Theorem]{Hill_1993_Jordan} asserts that, for each semisimple $s\in M_n(\mathbb{F}_q)$, there exists a bijection
\begin{equation*}
\begin{split}
\mathrm{Irr}_{s}(\mathrm{GL}_n(\mathcal{O}_r))
& \longleftrightarrow 
\{  \textrm{nilpotent orbit representations of } C_{\mathrm{GL}_n(\mathcal{O}_r)}(\hat{s})  \};\\ 
\sigma
& \longmapsto 
\nu_{\sigma},
\end{split}
\end{equation*}
where $\hat{s} \in M_n(\mathcal{O}_r)$ is a lift of $s$, such that $\frac{\dim \sigma}{\dim \nu_{\sigma}}$ is a constant. Replacing ``nilpotent orbit'' by ``unipotent'' this statement becomes an exact analogue of the bijection part of Lusztig's Jordan decomposition; see \cite[Theorem~11.5.1]{DM_book_2nd_edition} and \cite[Theorem~2.6.22]{Geck_Malle_2020book}. Hill's analogue of Jordan decomposition is very useful for the quantitative information needed in computing representation zeta functions; see \cite[Subsection~5.2]{OnnPrasadSingla_2025_zetaA2poschar}. Recently it also plays an important role in proving the equivalence, in the case of $\mathrm{GL}_n$, between regular semisimple orbit representations and regular higher level Deligne--Lusztig representations; see \cite[Proposition~3.9 and Remark~4.6]{ChenStasinski_2023_algebraisation_II}.

\vspace{2mm} In this work, motivated by the constructions in \cite{Gerardin1973GL_n,ChenStasinski_2016_algebraisation,Chen_2016_GenericCharSh,ChenStasinski_2023_algebraisation_II}, we find that Hill's existence assertion can be refined to a method of constructing irreducible representations of $\mathrm{GL}_n(\mathcal{O}_r)$, at least when $r$ is even: Attached to each pair $(s,\nu)$ where $s$ is a semisimple element in $\mathfrak{gl}_n(\mathbb{F}_q)=M_n(\mathbb{F}_q)$ and $\nu$ is a nilpotent orbit representation of the centraliser of $s$, we construct explicitly a representation $J_r(s,\nu)$ of $\mathrm{GL}_n(\mathcal{O}_r)$, which we call the Jordan induction. It is of the form
$$J_r(s,\nu):=
\mathrm{Ind}_{(L_sG_r^{l})^F}^{\mathrm{GL}_n(\mathcal{O}_r)}
\widetilde{\phi_{s}\otimes\nu},$$
in which $(L_sG_r^{l})^F$ is a group extension of the centraliser (by the so-called arithmetic radical), and $\widetilde{\phi_s\otimes \nu}$ is certain representation of it; see Definition~\ref{defi:Jordan ind}. When $s$ is regular and semisimple, $\nu$ will be imprimitive (Definition~\ref{defi:primitive}) and this construction specialises to G\'erardin's representations \cite{Gerardin1973GL_n}.

\vspace{2mm} The main properties of $J_r(s,\nu)$ are:
\begin{thmMain}[Theorem~\ref{thm:main}] Suppose that $r$ is even. Then one has:
\begin{enumerate}
	\item[(A)] (Irreducibility) The representations $J_r(s,\nu)$ are always irreducible.
	\item[(B)] (Uniqueness) The map $J_r(s,-)$ is an injection for every semisimple $s \in M_n(\mathbb{F}_q)$; moreover, 
	$$\mathrm{Im}(J_r(s,-))\cap \mathrm{Im}(J_r(s',-))=\emptyset$$
	if  $s,s'\in M_n(\mathbb{F}_q)$ are not $\mathrm{GL}_n(\mathbb{F}_q)$-conjugated.
	\item[(C)] (Exhaustion) One has 
	$$\mathrm{Irr}(\mathrm{GL}_n(\mathcal{O}_r))=\mathrm{Im}(J_r(-,-)).$$
	\item[(D)] (Dimension) For a given $s$, the ratio $\frac{\dim J_r(s,\nu)}{\dim\nu}$ is a constant independent of $\nu$.
\end{enumerate}
\end{thmMain}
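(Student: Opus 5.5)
The approach is Clifford theory relative to the abelian normal subgroup $G_r^{l}=\ker(\mathrm{GL}_n(\mathcal{O}_r)\to\mathrm{GL}_n(\mathcal{O}_l))$, where $r=2l$. Throughout I use the standard duality $\mathrm{Irr}(G_r^{l})\cong M_n(\mathcal{O}_l)$, which is $\mathrm{GL}_n(\mathcal{O}_r)$-equivariant: to $\beta$ one attaches the character $\psi_\beta(1+\pi^l x)=\psi(\mathrm{tr}(\beta x))$.

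For an irreducible $\rho$ of $\mathrm{GL}_n(\mathcal{O}_r)$, its restriction to $G_r^l$ is a multiple of a single orbit of characters $\psi_\beta$. Taking the semisimple part of $\beta \bmod \pi$ gives $s$, and this recovers the partition by $(s)$ recalled above. The first step is to identify the stabiliser of $\psi_\beta$. For $\beta=\hat s+\beta'$, with $\hat s$ a lift of $s$ and $\beta'$ in the centraliser $\mathfrak l_s$ and topologically nilpotent modulo $\pi$, I would show $\mathrm{Stab}(\psi_\beta)\subseteq (L_sG_r^{l})^F$. The tool is the decomposition $\mathfrak{gl}_n=\mathfrak l_s\oplus[\hat s,\mathfrak{gl}_n]$ together with a Hensel-type argument: $\mathrm{ad}(\hat s)$ is invertible on the complement because $s$ has distinct eigenvalue blocks. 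This is the ``arithmetic radical'' step, and it explains why $L_sG_r^l$ appears in Definition~\ref{defi:Jordan ind}.

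Next comes the key construction. The character $\phi_s$ (the extension of $\psi_{\hat s}$ to $(L_sG_r^l)^F$, trivial on the $L_s$-part) is tensored with $\nu$, a nilpotent orbit representation of $C(\hat s)=L_s^F$, and inflated to give $\widetilde{\phi_s\otimes\nu}$. Its restriction to $G_r^l$ consists of characters $\psi_{\hat s+\beta'}$ with $\beta'$ in a nilpotent $L_s$-orbit.

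For \textbf{(A)}, I apply Mackey's criterion. For $g\notin (L_sG_r^l)^F$, the intersection $(L_sG_r^l)^F\cap {}^g(L_sG_r^l)^F\supseteq G_r^l$. On $G_r^l$ the two restrictions involve characters $\psi_{\hat s+\beta'}$ and $\psi_{g(\hat s+\beta'')g^{-1}}$. These have different semisimple parts unless $g$ normalises the $s$-isotypic decomposition, which forces $g\in L_sG_r^l$ by the first step. Hence no common constituent exists, and the induced representation is irreducible because $\widetilde{\phi_s\otimes\nu}$ is irreducible (as $\nu$ is).

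For \textbf{(D)}, $\dim J_r(s,\nu)=[\mathrm{GL}_n(\mathcal{O}_r):(L_sG_r^l)^F]\dim\nu$, which is immediate.

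For \textbf{(B)}, the restriction to $G_r^l$ recovers $s$ up to conjugacy, which gives disjointness for non-conjugate $s,s'$. For injectivity, Clifford theory gives a bijection between the $\psi_{\hat s+\cdot}$-isotypic irreducibles of $(L_sG_r^l)^F$ whose induction is irreducible and their inductions. So $J_r(s,\nu)\cong J_r(s,\nu')$ forces $\widetilde{\phi_s\otimes\nu}\cong\widetilde{\phi_s\otimes\nu'}$, and tensoring with $\phi_s^{-1}$ gives $\nu\cong\nu'$.

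For \textbf{(C)}, take $\rho\in\mathrm{Irr}_s$. By the first step and Clifford theory, $\rho$ is induced from an irreducible $\theta$ of $(L_sG_r^l)^F$ lying over the $\psi_{\hat s+\beta'}$. Then $\theta\otimes\phi_s^{-1}$ has nilpotent orbit type, and the work is to show it factors through the quotient to $L_s^F$ in a way that makes it a nilpotent orbit representation $\nu$.

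The main obstacle is this factorisation. It requires $\phi_s$ to be a genuine character of $(L_sG_r^l)^F$, that is, one extending $\psi_{\hat s}$. It also requires the correspondence between representations of $(L_sG_r^l)^F$ lying over $\hat s+\text{nilpotent}$ and nilpotent orbit representations of $L_s^F$. I would first try to build $\phi_s(xy)=\psi(\mathrm{tr}(\hat s\log\text{-type}(y)))$ via the decomposition above, and check multiplicativity using $[\hat s,L_s]=0$.
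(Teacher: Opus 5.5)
Your arguments for (A), (B) and (D) are essentially the paper's. All three rest on the stabiliser statement (Lemma~\ref{lemm: stab_2 level}), which the paper proves by induction on the level using the $\mathrm{ad}_A$-eigenspace decomposition and the nilpotency of $\mathrm{ad}_N$. That is your Hensel argument, with two adjustments. You need that $\mathrm{ad}(\hat s+\beta')$, not merely $\mathrm{ad}(\hat s)$, is invertible on $[\hat s,\mathfrak{gl}_n]$; this holds because $\mathrm{ad}(\beta')$ commutes with $\mathrm{ad}(\hat s)$ and is nilpotent mod $\pi$. For injectivity you need the version ``$g(\hat s+\beta'')g^{-1}=\hat s+\beta'$ forces $g\in L_sG_r^l$'' with $\beta',\beta''$ possibly in different $L_s$-orbits.

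The genuine gap is (C). You stop at what you call the main obstacle, and your plan for it is wrong. $\phi_s$ is not something to be constructed: it is the given character $\prod_i\phi''_{s_i}(\det(-)_i)$ of $L_s^F$, inflated to $(L_sG_r^l)^F$. It is trivial on the $U_s^{\pm}$-part, not on the $L_s$-part; if it were trivial on $L_s^F$, then $\Omega(\phi_s)=\{0\}$, contradicting Lemma~\ref{lemm:s.s. part of orbit_prep} when $s\neq0$. Your candidate $\phi_s(xy)=\psi(\mathrm{tr}(\hat s\log y))$ is not even well defined, since $L_s\cap G_r^l=L_s^l$ and $\psi_{\hat s}$ is non-trivial on $(L_s^l)^F$ for $s\neq 0$. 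The paper does not argue directly at all. By (A) and (B), $J_r(s,-)$ injects the nilpotent orbit representations of $L_s^F$ into $\mathrm{Irr}_s(\mathrm{GL}_n(\mathcal{O}_r))$. Hill's bijection \cite[Theorem~2.13]{Hill_1993_Jordan} says these two finite sets have the same size, so the injection is onto.

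Your direct route can be completed, and would then reprove Hill's count for even $r$ rather than quote it. Two things must be supplied.

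\emph{The factorisation.} This is easy once $\phi_s$ is read correctly.
The subgroup $(U_s^{\pm})^F$ of $(G_r^l)^F$ corresponding under $a$ to $[\hat s,\mathfrak{gl}_n]$ is normal in $(L_sG_r^l)^F$, with quotient $L_s^F$. The trace form pairs $\mathfrak{l}_s$ trivially with $[\hat s,\mathfrak{gl}_n]$, so every $\psi_{x(\hat s+\beta')x^{-1}}$ with $x\in L_s^F$ is trivial on $(U_s^{\pm})^F$. Hence any $\theta$ lying over $\psi_{\hat s+\beta'}$ is inflated from some $\mu\in\mathrm{Irr}(L_s^F)$. Then $\nu:=\phi_s^{-1}\otimes\mu$ has nilpotent orbit, because $\phi_s$ agrees with $\psi_{\hat s}$ on $(L_s^l)^F$. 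To see this, use $\det(1+\pi^lX)=1+\pi^l\mathrm{tr}(X)$ when $r=2l$, together with the fact that $\phi''_{s_i}$ restricts to $\phi'_{s_i}$.

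\emph{The normal form.} You tacitly assume that every $\beta\in M_n(\mathcal{O}_l)$ whose reduction has semisimple part conjugate to $s$ is conjugate to some $\hat s+\beta'$, with $\beta'\in\mathfrak{l}_s$ nilpotent mod $\pi$. This needs a Hensel lift of the semisimple part of $\rho_1(\beta)$ into $\mathcal{O}_l[\beta]$. After conjugation, that lift differs from your fixed $\hat s=\rho_l(A_s)$ by an element of $\pi\mathfrak{l}_s$, which can be absorbed into $\beta'$. This is exactly the input the paper imports from Hill.
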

As we will see, once the formalism of $J_r(s,\nu)$ is set up, this theorem essentially follows from Clifford theory. 

\vspace{2mm} The above result reduces the problem of constructing irreducible representations to the problem of constructing nilpotent orbit representations. We should mention that, however, constructing nilpotent orbit representations is in general notoriously difficult, and only partial results are available: Hill \cite{Hill_1993_Jordan,Hill_1994_nilpotent} proves that there is an explicit monomial representation whose irreducible constituents are precisely the nilpotent orbit representations; this has been generalised to general connected reductive groups in \cite{Chen_2019_flag_orbit}. For $\mathrm{GL}_3$ with $p>3$, all nilpotent orbit representations are constructed only recently by Onn--Prasad--Singla \cite{OnnPrasadSingla_2025_zetaA2poschar}; prior to it, the counting of representations have been given by Avni--Klopsch--Onn--Voll \cite{AvniKlopschOnnVoll_2016_similarity} under a further assumption on $p$. For $\mathrm{GL}_n$ with $n\geq4$, constructing all the nilpotent orbit representations is believed to be intractable (see \cite[Section~4]{Nagornyi_1978_GL3}). Nevertheless, all the \emph{regular} orbit representations (including in particular the regular nilpotent ones) have been obtained by Stasinski--Stevens \cite{Stasinski_Stevens_2016_regularRep} (in general) and by Krakovski--Onn--Singla \cite{Krakovski_Onn_Singla_regularchar_2018} (for odd $p$), by different methods. Moreover, for general connected reductive groups the recent works \cite{ChenStasinski_2016_algebraisation,ChenStasinski_2023_algebraisation_II} show that all the regular semisimple orbit representations can be obtained by taking the higher Deligne--Lusztig inductions, though in general the latter miss some nilpotent ones; see \cite{Sta2011ExtendedDL,Chen_2019_flag_orbit,Chen_2025_stability_higher_Coxeter_unipotent}.

\vspace{2mm} In Section~\ref{sec:main construction}, after accomplishing the formalism of Jordan induction and proving Theorem~\ref{thm:main}, we turn to a discussion on the graded additive group
$$A(\mathcal{O}_r)=\bigoplus_{n\geq0}\mathbb{Z}\mathrm{Irr}(\mathrm{GL}_n(\mathcal{O}_r))$$ 
following Zelevinsky \cite{Zelevinsky_1981_bk}. This group admits an algebra structure and a co-algebra structure, given by the Jordan induction. These two structures are linked by an adjunction, and as an algebra $A(\mathcal{O}_r)$ is generated by the \emph{cuboidal} representations (Definition~\ref{defi:cuboidal rep}), which play a role similar to that of the cuspidal representations in Harish-Chandra theory; see Proposition~\ref{prop:cuboidal rep}, \ref{prop:adjunction and primitivity}, and \ref{prop:Hopf axiom}. We remark that in this direction several related, but different, constructions have been studied, like the one in Crisp--Meir--Onn \cite{Crisp_Meir_Onn_varHarishChandra,Crisp_Meir_Onn_inductive_approach_GLn} using idempotent techniques, and the one in \cite{Chen_2024_Hopfalg_duality} using invariant characters of the Lie algebra $\mathfrak{gl}_n(\mathbb{F}_q)$; we expect to investigate their relations in a future work.

\vspace{2mm} In Section~\ref{sec:rep from orbits}  we consider a simple application of Jordan induction to the sections of the orbit map $\Omega(-)$ (Definition~\ref{defi:orbit type}), in the equal characteristic case. More precisely, we will construct a map
$$
\mathcal{J}_r \colon \mathfrak{gl}_n(\mathbb{F}_q)\longrightarrow \mathrm{Irr}(\mathrm{GL}_n(\mathcal{O}_r))
$$
satisfying that 
$$\omega \in \Omega(\mathcal{J}_r(\omega))$$ 
for every $\omega\in \mathfrak{gl}_n(\mathbb{F}_q)$. This relies on a property of nilpotent matrices and the construction of Jordan induction; see Corollary~\ref{coro:from orbit to irrep}.

\vspace{2mm} In Section~\ref{sec:cohom} we turn to the cohomological nature of $J_r(-,-)$. In the classical finite field case, an important property of Lusztig induction is that it gives a realisation of Lusztig's Jordan decomposition; see \cite[Theorem~11.4.3(ii)]{DM_book_2nd_edition}. In Conjecture~\ref{conj:Jordan=Lusztig} we propose an analogue for $J_r(-,-)$, which asserts that
$$J_r(s,\nu)
\cong R_{L_s,U_s}^{\mathrm{GL}_n}(\phi_s\otimes\nu)$$
for every parameter $(s,\nu)$, where $R_{L_s,U_s}^{\mathrm{GL}_n}$ denotes the Lusztig induction (the parahoric version introduced by Chan \cite{Chan_2024_Scalar_Product}). This generalises the algebraisation theorem (specialising to $\mathrm{GL}_n$) of \cite{ChenStasinski_2016_algebraisation,ChenStasinski_2023_algebraisation_II} from regular semisimple orbit representations to all irreducible representations. We verify a few cases in Proposition~\ref{prop:simplest cases of J=L conj}. By Proposition~\ref{prop:role of cuboidal in cohom rep} this conjecture reduces to the cuboidal case.

\vspace{2mm} In Section~\ref{sec:odd levels} we discuss a possible construction of $J_r(-,-)$ for odd $r>1$. This involves a conjectural non-abelian Heisenberg lift inspired by the works \cite{ChenStasinski_2023_algebraisation_II,Stasinski_Stevens_2016_regularRep,Bushnell_Froelich_book_GaussSum_1983}; see Conjecture~\ref{conj:odd levels}. 

\vspace{2mm} Presented in the final Section~\ref{sec:GL3 r=2} is an example on the construction of irreducible representations of $\mathrm{GL}_3(\mathcal{O}_2)$ within the framework of Jordan induction.

\vspace{2mm} \noindent {\bf Acknowledgement.} The author is grateful to Alexander Stasinski for helpful comments on an earlier version of this paper, and also for suggesting him to read Hill's series of works during his PhD studies; it was only after graduating that he began to truly appreciate these works. Part of this work was undertaken during the author's stay at TSIMF in Jan 2026, for the conference Geometric Representation Theory and Automorphic Forms, and the author thanks the organisers for the hospitality.

\vspace{2mm} \noindent {\bf Human disclosure.} Except for a few grammar corrections and \LaTeX code search assisted by LLM tools, this work is done by the carbon-based human male.

\section{Jordan induction}\label{sec:main construction}

We start by briefly setting up the notation, partially following our previous work \cite{ChenStasinski_2016_algebraisation}.

\vspace{2mm} First, fix a maximal unramified extension $K^{\mathrm{ur}}$ of $K:=\mathrm{Frac}(\mathcal{O})$, and let $\mathcal{O}^{\mathrm{ur}}$ be its ring of integers; we also write $\mathcal{O}^{\mathrm{ur}}_r:=\mathcal{O}^{\mathrm{ur}}/\pi^r$. For $F$ the Frobenius element ``$x\mapsto x^q$'' in $\mathrm{Gal}(\overline{\mathbb{F}}_q/\mathbb{F}_q)$, by identifying $\mathrm{Gal}(\overline{\mathbb{F}}_q/\mathbb{F}_q)$ with  $\mathrm{Gal}(K^{\mathrm{ur}}/K)$ we also write $F$ for the corresponding element in $\mathrm{Gal}(K^{\mathrm{ur}}/K)$. The Greenberg functor (see \cite{Greenberg19611,Greenberg19632,Sta2009Unramified}) identifies $\mathrm{GL}_n(\mathcal{O}^{\mathrm{ur}}_r)$ with (the group of $\overline{\mathbb{F}}_q$-points of) an algebraic group $G_r$ over $\overline{\mathbb{F}}_q$ such that 
$$G_r(\overline{\mathbb{F}}_q)\cong \mathrm{GL}_n(\mathcal{O}^{\mathrm{ur}}_r)$$ 
as abstract groups. This algebraic group is defined over $\mathbb{F}_q$ and the geometric Frobenius, which commutes with the above isomorphism, should also be denoted by $F$. In particular, $G_r^F\cong\mathrm{GL}_n(\mathcal{O}_r)$. For every $i,j\in\{ 1,...,r \}$ with $i\leq j$, let $\rho_{j,i}$ be the reduction map from $\mathcal{O}^{\mathrm{ur}}_j$ to $\mathcal{O}^{\mathrm{ur}}_i$. They induce surjective ring morphisms
$$M_n(\mathcal{O}^{\mathrm{ur}}_j)\longrightarrow M_n(\mathcal{O}^{\mathrm{ur}}_i)$$ 
which we denote again by $\rho_{j,i}$. When $j=r$ we write $\rho_i$ short for $\rho_{r,i}$; the reduction map from $\mathcal{O}^{\mathrm{ur}}$ will also be written as $\rho_i$. If $H\subseteq G_j$ is a closed subgroup, we use the notation $H_i:=\rho_{j,i}(H)$ and $H^i:=H\cap\mathrm{Ker}(\rho_{j,i})$ for convention, when there is no confusion. Let $l:=[\frac{r+1}{2}]$ and $l':=[\frac{r}{2}]$.  From now on we fix  a  group morphism 
 \begin{equation}\label{temp:psi}
       \psi\colon \mathcal{O}_{l'}\rightarrow \overline{\mathbb{Q}}_{\ell}^{\times}
 \end{equation}
that is non-trivial on $\pi^{l'-1}\mathcal{O}_{l'}$, where   $\ell$ is a prime invertible in $\mathbb{F}_q$.

\vspace{2mm} For a given semisimple element $s\in \mathfrak{gl}_n(\mathbb{F}_q)=M_n(\mathbb{F}_q)$, by conjugating $s$ into the diagonal part of $M_n(\overline{\mathbb{F}}_q)$, up to permutation we have (here  ``$\rightsquigarrow$'' means ``conjugates to'')
\begin{equation*}
\begin{split}
s
\rightsquigarrow\mathrm{diag}& ((s_1I_{n_1},s_1^qI_{n_1},\cdots, s_1^{q^{\lambda_1-1}}I_{n_1}),
\cdots,
(s_dI_{n_d},s_d^qI_{n_d},\cdots, s_d^{q^{\lambda_d-1}}I_{n_d}) ),\\
\rightsquigarrow\mathrm{diag}&(
\underbrace{(s_1,s_1^q,\cdots, s_1^{q^{\lambda_1-1}}),
\cdots,(s_1,s_1^q,\cdots, s_1^{q^{\lambda_1-1}})}_{n_1\ \textrm{times}},\cdots\cdots,\\
&\cdots\cdots, \underbrace{(s_d,s_d^q,\cdots, s_d^{q^{\lambda_d-1}}),
\cdots,(s_d,s_d^q,\cdots, s_d^{q^{\lambda_d-1}})}_{n_d\ \textrm{times}}
),
\end{split}
\end{equation*}
where the $s_i$'s are in $\overline{\mathbb{F}}_q$ and
the polynomials $\prod_{j=0}^{\lambda_{i}-1}(x-s_{i}^{q^j})$ (for $i=1,\cdots, d$) are pairwise distinct irreducible polynomials over $\mathbb{F}_q$. Note in particular that 
\begin{equation}\label{temp:eigenpoly of s}
E_s(x):=\prod_{i=1}^d \left(\prod_{j=0}^{\lambda_{i}-1}(x-s_{i}^{q^j})\right)^{n_i}
\end{equation}
is the characteristic polynomial of $s$.

\vspace{2mm} For each $i\in\{1,\cdots,d\}$ let $K^{(\lambda_i)}$ be the unramified extension of $K$ of degree $\lambda_i$ inside $K^{\mathrm{ur}}$,  whose residue field $\mathbb{F}_{q^{\lambda_i}}$ is generated by $s_i$.  For each $i$ we pick (and fix) a representative $\hat{s}_i$ of $s_i$ in $\mathcal{O}^{(\lambda_i)}$ (the ring of integers in $K^{(\lambda_i)}$). So
\begin{equation*}
\begin{split}
\hat{s}
=\mathrm{diag}&(
\underbrace{(\hat{s}_1,F(\hat{s}_1),\cdots, F^{\lambda_1-1}(\hat{s}_1)),
\cdots,(\hat{s}_1,F(\hat{s}_1),\cdots, F^{\lambda_1-1}(\hat{s}_1))}_{n_1\ \textrm{times}},\\
&\cdots\cdots, 
\underbrace{(\hat{s}_d,F(\hat{s}_d),\cdots, F^{\lambda_d-1}(\hat{s}_d)),
\cdots,(\hat{s}_d,F(\hat{s}_d),\cdots, F^{\lambda_d-1}(\hat{s}_d))}_{n_d\ \textrm{times}}
),
\end{split}
\end{equation*}
is a lift of a conjugate of $s$ in $M_n(\mathcal{O}^{\mathrm{ur}})$. 

\begin{lemm}\label{lemm: conjugation to rational matrix}
Each diagonal block matrix $\mathrm{diag}(\hat{s}_i,F(\hat{s}_i),\cdots, F^{\lambda_i-1}(\hat{s}_i))$ in above can be conjugated by a matrix in ${\mathrm{GL}}_{\lambda_i}(\mathcal{O}^{\mathrm{ur}})$ to a (regular and semisimple) matrix $A_i\in M_{\lambda_i}(\mathcal{O})$; the ring of   matrices commuting with $A_i$ in $M_{\lambda_i}(\mathcal{O})$ is isomorphic to $\mathcal{O}^{(\lambda_i)}$.
\end{lemm}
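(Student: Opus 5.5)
The plan is to realise $A_i$ as the matrix of multiplication by $\hat{s}_i$ on $\mathcal{O}^{(\lambda_i)}$, viewed as a free $\mathcal{O}$-module of rank $\lambda_i$. Write $\lambda=\lambda_i$, $E=K^{(\lambda)}$, $\mathcal{O}_E=\mathcal{O}^{(\lambda)}$. Since $E/K$ is unramified of degree $\lambda$ and the residue field $\mathbb{F}_{q^\lambda}$ is generated by $s_i$, Nakayama's lemma shows that $1,\hat{s}_i,\dots,\hat{s}_i^{\lambda-1}$ is an $\mathcal{O}$-basis of $\mathcal{O}_E$: it reduces to the basis $1,s_i,\dots,s_i^{\lambda-1}$ of $\mathbb{F}_{q^\lambda}$. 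In particular $\mathcal{O}_E=\mathcal{O}[\hat{s}_i]$. Let $A_i\in M_\lambda(\mathcal{O})$ be the matrix of $x\mapsto \hat{s}_ix$ in this basis, i.e.\ the companion matrix of the minimal polynomial $f$ of $\hat{s}_i$ over $K$. The polynomial $f$ is monic of degree $\lambda$ with coefficients in $\mathcal{O}$ and reduces to the minimal polynomial of $s_i$.

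Next I will diagonalise $A_i$ over $\mathcal{O}^{\mathrm{ur}}$. The roots of $f$ are $F^j(\hat{s}_i)$ for $0\le j<\lambda$, and they are distinct modulo $\pi$ because the $s_i^{q^j}$ are distinct. For each $j$, take the eigenvector $v_j=(1,F^j(\hat{s}_i),\dots,F^j(\hat{s}_i)^{\lambda-1})^{T}$ of the transpose, or equivalently use the Vandermonde matrix $V=(F^j(\hat{s}_i)^k)_{k,j}$. Then $A_i^{T}$, or $A_i$ after conjugating by $V^{-T}$, becomes $\mathrm{diag}(\hat{s}_i,\dots,F^{\lambda-1}(\hat{s}_i))$. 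The key point is that $\det V=\prod_{j<j'}(F^{j'}(\hat{s}_i)-F^j(\hat{s}_i))$ is a unit in $\mathcal{O}^{\mathrm{ur}}$, since its reduction is the Vandermonde determinant of distinct elements of $\mathbb{F}_{q^\lambda}$. Hence $V\in\mathrm{GL}_\lambda(\mathcal{O}^{\mathrm{ur}})$. Because $A_i$ and $A_i^{T}$ are conjugate in $\mathrm{GL}_\lambda(\mathcal{O})$ (companion matrices over the local ring are conjugate to their transposes), one can also avoid this by working with $A_i$ directly via the right eigenvectors. The same argument shows $A_i$ is regular semisimple, since its eigenvalues are distinct even mod $\pi$.

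For the centraliser, suppose $B\in M_\lambda(\mathcal{O})$ commutes with $A_i$. Via the module structure, $\mathcal{O}^\lambda\cong\mathcal{O}_E$ becomes an $\mathcal{O}[A_i]=\mathcal{O}[\hat{s}_i]=\mathcal{O}_E$-module, and $B$ is an $\mathcal{O}_E$-linear endomorphism of the free rank-one $\mathcal{O}_E$-module $\mathcal{O}_E$. Therefore $B$ is multiplication by $B(1)\in\mathcal{O}_E$. Conversely, every multiplication map commutes with $A_i$. This gives a ring isomorphism $C_{M_\lambda(\mathcal{O})}(A_i)\cong\mathcal{O}_E=\mathcal{O}^{(\lambda_i)}$, namely $\mathcal{O}[A_i]$.

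The main obstacle is the integrality claim that the conjugating matrix lies in $\mathrm{GL}_\lambda(\mathcal{O}^{\mathrm{ur}})$ and not merely in $\mathrm{GL}_\lambda(K^{\mathrm{ur}})$. This is handled by the unit Vandermonde determinant, which rests on the distinctness of the Frobenius conjugates of $s_i$ in the residue field. Everything else is Nakayama's lemma together with $\mathcal{O}_E=\mathcal{O}[\hat{s}_i]$.
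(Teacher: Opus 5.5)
Your proof is correct, but it takes a different route from the paper. The paper never writes $A_i$ down. It sets $D=\mathrm{diag}(\hat{s}_i,\dots,F^{\lambda_i-1}(\hat{s}_i))$ and lets $c$ be the permutation matrix with $c\,F(D)\,c^{-1}=D$. It then applies Lang--Steinberg on the Greenberg-functor groups $\mathrm{GL}_{\lambda_i}(\mathcal{O}^{\mathrm{ur}}_m)$, level by level, to build a compatible system $g_m$ with $g_m^{-1}F(g_m)=c$. Passing to the limit gives $g$ with $g^{-1}F(g)=c$, so $A_i:=gDg^{-1}$ is $F$-fixed. For the centraliser, $g^{-1}Bg$ is diagonal (distinct eigenvalues), and $F$-stability of $B$ forces the shape $\mathrm{diag}(t,F(t),\dots,F^{\lambda_i-1}(t))$ with $t\in\mathcal{O}^{(\lambda_i)}$.

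You instead identify $\mathcal{O}^{\lambda_i}$ with $\mathcal{O}^{(\lambda_i)}=\mathcal{O}[\hat{s}_i]$ and take $A_i$ to be multiplication by $\hat{s}_i$. The conjugator is then an explicit Vandermonde matrix with unit determinant, and the centraliser statement reduces to $\mathrm{End}_{\mathcal{O}^{(\lambda_i)}}(\mathcal{O}^{(\lambda_i)})=\mathcal{O}^{(\lambda_i)}$, with no descent at all.

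Your route is more elementary and explicit. Your conjugator also visibly lies in $\mathrm{GL}_{\lambda_i}(\mathcal{O}^{(\lambda_i)})\subseteq\mathrm{GL}_{\lambda_i}(\mathcal{O}^{\mathrm{ur}})$, whereas the inverse limit $\varprojlim_m\mathrm{GL}_{\lambda_i}(\mathcal{O}^{\mathrm{ur}}_m)$ in the paper is, strictly speaking, $\mathrm{GL}_{\lambda_i}$ of the completion of $\mathcal{O}^{\mathrm{ur}}$.

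The paper's route is the standard Lang--Steinberg descent, native to the Greenberg-functor framework used throughout and adaptable to other twisted forms. It presents the centraliser isomorphisms as induced by a single conjugation, which is how their compatibility with the reductions $\rho_m$ is obtained right afterwards. Your regular-representation isomorphism is defined over $\mathcal{O}$ and also reduces compatibly modulo $\pi^m$, so nothing is lost.

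One small point: your remark that $A_i$ is $\mathrm{GL}_{\lambda_i}(\mathcal{O})$-conjugate to $A_i^{T}$ is unnecessary, since $A_i^{T}V=VD$ already gives $A_i=(V^{T})^{-1}DV^{T}$.
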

\begin{proof}
Let $c\in \mathrm{GL}_{\lambda_i}(\mathcal{O})$ be the permutation  matrix such that 
$$c\cdot F(\mathrm{diag}(\hat{s}_i,F(\hat{s}_i),\cdots, F^{\lambda_i-1}(\hat{s}_i)))\cdot c^{-1}
=
\mathrm{diag}(\hat{s}_i,F(\hat{s}_i),\cdots, F^{\lambda_i-1}(\hat{s}_i)).$$
For every $m\in\mathbb{Z}_{>0}$, by the Greenberg functor we can view $\mathrm{GL}_{\lambda_i}(\mathcal{O}^{\mathrm{ur}}_m)$ as a connected algebraic group $\Gamma_m$ over $\overline{\mathbb{F}}_q$. Let $L_m\colon x\mapsto x^{-1}F(x)$ be the Lang isogeny on $\Gamma_m$. 

\vspace{2mm} Now fix an $m$. Then by the Lang--Steinberg theorem we can find $g_m\in \Gamma_m$ with $L_m(g_m)=c$. As $c$ is a permutation matrix, we can naturally lift it to $\Gamma_{m+1}$. Then again by the Lang--Steinberg theorem we can pick $g'_{m+1}\in\Gamma_{m+1}$ with $L_{m+1}(g'_{m+1})=c$. Since 
$$L_{m}(\rho_{m+1,m}(g'_{m+1}))=\rho_{m+1,m}(L_{m+1}(g'_{m+1}))=c=L_m(g_m),$$
we have $g_m=h_m \cdot \rho_{m+1,m}(g'_{m+1})$ for some $h_m\in \Gamma_m^F$. Let $h_{m+1}\in \Gamma_{m+1}^F$ be such that $\rho_{m+1,m}(h_{m+1})=h_m$ and put $g_{m+1}:=h_{m+1}\cdot g'_{m+1}$. Note that we have $\rho_{m+1,m}(g_{m+1})=g_m$ and $L_{m+1}(g_{m+1})=c$.

\vspace{2mm} Repeating the above process inductively from $m=1$ we get a sequence $(g_m)_m$ satisfying that $L_m(g_m)=c$ and $\rho_{m,m'}(g_m)=g_{m'}$ for all $m\geq m'\geq1$.

\vspace{2mm} Now, since 
$$\mathrm{GL}_{\lambda_i}(\mathcal{O}^{\mathrm{ur}})=\varprojlim_{m} \Gamma_m,$$
the sequence $(g_m)_m$ gives an element $g\in \mathrm{GL}_{\lambda_i}(\mathcal{O}^{\mathrm{ur}})$ with  $g^{-1}F(g)=c$. So the matrix
$$A_i:=g\cdot \mathrm{diag}(\hat{s}_i,F(\hat{s}_i),\cdots, F^{\lambda_i-1}(\hat{s}_i))\cdot g^{-1}$$
is $F$-stable, and hence lies in $M_{{\lambda_i}}(\mathcal{O})$. Now, if $B\in M_{{\lambda_i}}(\mathcal{O})$ satisfies that $A_iB=BA_i$, then, since the elements $F^j(\hat{s}_i)$ are pairwise distinct, the matrix $g^{-1}Bg\in M_{\lambda_i}(\mathcal{O}^{\mathrm{ur}})$ must be a diagonal matrix. Moreover, as $B$ is $F$-stable we have
$$F(g^{-1}Bg)=F(g^{-1})gg^{-1}Bgg^{-1}F(g)=c^{-1}g^{-1}Bgc.$$
This implies that $g^{-1}Bg$ is of the form
$$\mathrm{diag}(t_i,F(t_i),\cdots, F^{\lambda_i-1}(t_i))$$
for some $F^{\lambda_i}(t_i)=t_i$ (i.e., $t_i\in\mathcal{O}^{(\lambda_i)}$), which gives the second assertion.
\end{proof}

By Lemma~\ref{lemm: conjugation to rational matrix}, $\hat{s}$ can be conjugated (by a matrix in $\mathrm{GL}_n(\mathcal{O}^{\mathrm{ur}})$) to
$$
A_s:=\mathrm{diag}( 
(\underbrace{A_1,\cdots, A_1}_{n_1\ \text{times}}),
\cdots,
(\underbrace{A_d,\cdots, A_d}_{n_d\ \textrm{times}})
)\in M_n(\mathcal{O}),
$$
and the second assertion of Lemma~\ref{lemm: conjugation to rational matrix} gives an isomorphism of rings
\begin{equation}\label{temp:s.s. centraliser}
 C_{M_n(\mathcal{O})}(A_s)\cong
\prod_{i=1}^dM_{n_i}(\mathcal{O}^{(\lambda_i)});
\end{equation}
similarly we have
\begin{equation}\label{temp:s.s. centraliser level m}
 C_{M_n(\mathcal{O}_{m})}(\rho_{m}(A_s))\cong
\prod_{i=1}^dM_{n_i}(\mathcal{O}^{(\lambda_i)}/\pi^m)
\end{equation}
for all $m\in\mathbb{Z}_{>0}$. In the case $m=r$ let
\begin{equation}\label{temp:Lie alg of L over ring}
\mathcal{L}_s:=C_{M_n(\mathcal{O}_{r})}(\rho_{r}(A_s)).
\end{equation}
By \eqref{temp:s.s. centraliser level m}, if we put

\begin{equation}\label{temp: defn of whole centraliser}
L_s:=C_{\mathrm{GL}_n(\mathcal{O}^{\mathrm{ur}}_r)}(\rho_{r}(A_s))\ 
\left(
    \cong
    \prod_{i=1}^d(\mathrm{GL}_{n_i}(\mathcal{O}^{\mathrm{ur}}_r))^{\lambda_i}
\right),
\end{equation}
then
\begin{equation}\label{temp: defn of rational centraliser}
L_s^F=C_{\mathrm{GL}_n(\mathcal{O}_r)}(\rho_{r}(A_s))
\cong
\prod_{i=1}^d\mathrm{GL}_{n_i}(\mathcal{O}^{(\lambda_i)}/\pi^r).
\end{equation}
From the argument of Lemma~\ref{lemm: conjugation to rational matrix}, the isomorphisms \eqref{temp:s.s. centraliser},\eqref{temp:s.s. centraliser level m},\eqref{temp: defn of whole centraliser},\eqref{temp: defn of rational centraliser} are obtained via taking conjugations, and hence commute with taking reduction maps.

\vspace{2mm} Now the point is, the semisimple element $s$ not only brings us a subgroup $L_s^F\subseteq\mathrm{GL}_n(\mathcal{O}_r)$, but also brings a specific $1$-dimensional representation of $L_s^F$: First, let $a$ denote the identification of additive groups 
$$a\colon \pi^{l}\mathcal{O}^{\mathrm{ur}}_r \cong  \mathcal{O}^{\mathrm{ur}}_{l'}; 
\quad \pi^l x \longmapsto \rho_{l'}(x).$$
When there is no confusion, we also denote by $a$ the identification of matrix groups 
\begin{equation}\label{eqn: defn of a(-)}
a\colon G_r^l
\cong  
M_n(\mathcal{O}^{\mathrm{ur}}_{l'}); 
\quad I_n+\pi^l g \longmapsto \rho_{l'}(g).
\end{equation}
Then we get a representation
$$\phi'_{s_i}(x):=\psi\left(\sum_{j=0}^{\lambda_i-1} F^j(\rho_{l'}(\hat{s}_i)a(x))\right)$$
of  $\pi^{l}\mathcal{O}^{(\lambda_i)}/\pi^r$; since   $\pi^{l}\mathcal{O}^{(\lambda_i)}/\pi^r$ can be viewed as a subgroup of $(\mathcal{O}^{(\lambda_i)}/\pi^r)^{\times}$ (by formally taking $\pi^{l}x$ to  $1+\pi^{l}x$), we can consider the induced representation $\mathrm{Ind}_{\pi^{l}\mathcal{O}^{(\lambda_i)}/\pi^r}^{(\mathcal{O}^{(\lambda_i)}/\pi^r)^{\times}}\phi'_{s_i}$. For each $s_i$ we  fix an (arbitrary) irreducible constituent $\phi''_{s_i}$ of $\mathrm{Ind}_{\pi^{l}\mathcal{O}^{(\lambda_i)}/\pi^r}^{(\mathcal{O}^{(\lambda_i)}/\pi^r)^{\times}}\phi'_{s_i}$. This in turn gives a $1$-dimensional representation 
$$\phi_{s}(-):=\prod_{i=1}^d\phi''_{s_i}\left( \det(-)_i \right)$$
of $L_s^F$, in which the symbol $\det(-)_i$ means taking the determinant of  $\mathrm{GL}_{n_i}(\mathcal{O}^{(\lambda_i)}/\pi^r)$. 

\vspace{2mm} Since $\mathrm{Tr}(-,-)$ is a non-degenerate bilinear form on $M_n(\mathcal{O}_{l'})$, the irreducible representations of $(G_r^{l})^F$ are of the form (see e.g.\ \cite[Remark~3.6]{ChenStasinski_2016_algebraisation})
\begin{equation}\label{temp:char of Lie alg}
    \psi_{\beta}(-):=\psi(\mathrm{Tr}(\beta\cdot a(-))),
\end{equation}
where $\beta\in M_n(\mathcal{O}_{l'})$ and $a(-)$ is defined by \eqref{eqn: defn of a(-)}. Similarly, for $B\in M_n(\mathbb{F}_q)$ we put
$$\psi_{B}(-)=\psi_{\hat{B}}|_{(G_r^{r-1})^F}(-),$$
where $\hat{B}$ is a lift of $B$ to $M_n(\mathcal{O}_{l'})$ 
(by construction $\psi_{B}$ is independent of the choice of such a lift). Now Clifford theory tells that, for every $\sigma\in\mathrm{Irr}(G_r^F)$, the restriction $\sigma|_{(G_r^{r-1})^F}$ is the sum of characters $\psi_{B}$ with $B$ running over a $\mathrm{GL}_n(\mathbb{F}_q)$-conjugacy class $\Omega(\sigma)\subseteq M_n(\mathbb{F}_q)$.
\begin{defi}\label{defi:orbit type}
    Given $\sigma\in\mathrm{Irr}(G_r^F)$, we call $\Omega(\sigma)$ the orbit of $\sigma$. Moreover, $\sigma$ is said to be a regular (resp.\ nilpotent, semisimple) orbit representation, if $\Omega(\sigma)$ contains a regular (resp.\ nilpotent, semisimple) element.
\end{defi}
We can and shall extend the construction of representation orbits to $\mathrm{Irr}(L_s^F)$, and when there is no confusion we use the same notation $\Omega(-)$.

\begin{lemm}\label{lemm:s.s. part of orbit_prep}
We have $\Omega(\phi_s)=\{\rho_1(A_s)\}$.
\end{lemm}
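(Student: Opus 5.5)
Since $\phi_s$ is $1$-dimensional, its restriction to $(L_s^F)^{r-1}=L_s^F\cap (G_r^{r-1})^F$ is a single character, so it suffices to identify that character. Concretely, we must show that for every $x\in (L_s^F)^{r-1}$ one has
$$\phi_s(x)=\psi_{\rho_1(A_s)}(x),$$
where the right-hand side is the restriction to the Lie algebra of $L_s$ (the analogue of the characters $\psi_B$ for $L_s^F$, defined via the trace form on $\mathcal{L}_s$). The orbit is then the $L_s^F$-conjugacy class of $\rho_1(A_s)$. Since $\rho_1(A_s)$ is central in $\rho_1(\mathcal{L}_s)\cong\prod_i M_{n_i}(\mathbb{F}_{q^{\lambda_i}})$, this class is the singleton $\{\rho_1(A_s)\}$.

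The first step is to reduce to a single factor via the isomorphism \eqref{temp: defn of rational centraliser}. It is compatible with reductions, so $(L_s^F)^{r-1}$ corresponds to $\prod_i (I+\pi^{r-1}M_{n_i}(\mathcal{O}^{(\lambda_i)}/\pi^r))$. Under it, $\rho_{l'}(A_s)$ corresponds to the scalar $\rho_{l'}(\hat s_i)$ in the $i$-th factor, because the isomorphism is conjugation by $g$ as in Lemma~\ref{lemm: conjugation to rational matrix}. Moreover, the trace $\mathrm{Tr}_{M_n}$ of an element of $\mathcal{L}_s$ corresponds to $\sum_i \mathrm{Tr}_{\mathcal{O}^{(\lambda_i)}/\mathcal{O}}\circ\mathrm{Tr}_{M_{n_i}}$. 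This holds because conjugating by $g$ turns the $i$-th block into $\mathrm{diag}(t,F(t),\dots,F^{\lambda_i-1}(t))$-type entries, whose trace is $\sum_j F^j(\mathrm{tr}\, t)$. Hence for $x=I+\pi^{r-1}X$ in the $i$-th factor we get
$$\psi_{\rho_1(A_s)}(x)=\psi\Bigl(\sum_{j=0}^{\lambda_i-1}F^j\bigl(\rho_{l'}(\hat s_i)\,a(\pi^{r-1-l}\mathrm{tr}X)\bigr)\Bigr),$$
using $\pi^{r-1}X=\pi^l\cdot\pi^{r-1-l}X$. Here we use $r-1\ge l$, which holds since $r>1$.

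The second step evaluates the left side. We have $\det(I+\pi^{r-1}X)=1+\pi^{r-1}\mathrm{tr}X$ in $\mathcal{O}^{(\lambda_i)}/\pi^r$, since $2(r-1)\ge r$. This element lies in the subgroup $1+\pi^l\mathcal{O}^{(\lambda_i)}/\pi^r$.

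The main obstacle is that $\phi''_{s_i}$ is only an arbitrary constituent of the induced representation, so we must know its restriction to this subgroup. Since $(\mathcal{O}^{(\lambda_i)}/\pi^r)^\times$ is abelian, $\phi''_{s_i}$ is a character whose restriction to $1+\pi^l\mathcal{O}^{(\lambda_i)}/\pi^r$ is a conjugate of $\phi'_{s_i}$, hence equal to $\phi'_{s_i}$. We also need that the formal identification $\pi^lx\mapsto1+\pi^lx$ is a group homomorphism, i.e.\ that $1+\pi^l\mathcal{O}/\pi^r$ is an abelian group isomorphic to $\pi^l\mathcal{O}/\pi^r$. This holds because $2l\ge r$.

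Therefore
$$\phi''_{s_i}(1+\pi^{r-1}\mathrm{tr}X)=\phi'_{s_i}(\pi^{r-1}\mathrm{tr}X)=\psi\Bigl(\sum_j F^j\bigl(\rho_{l'}(\hat s_i)\,a(\pi^{r-1}\mathrm{tr}X)\bigr)\Bigr),$$
which matches the right side. Taking the product over $i$ gives the equality of characters, and we conclude with the centrality argument above.
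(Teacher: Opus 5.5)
Your proposal is correct and follows the paper's route: pass to the block form $\prod_i \mathrm{GL}_{n_i}(\mathcal{O}^{(\lambda_i)}/\pi^r)$, use $\det(I+\pi^{r-1}X)=1+\pi^{r-1}\mathrm{tr}X$ and the trace identity to match $\phi_s$ with $\psi_{\rho_1(A_s)}$ on the level-$(r-1)$ kernel, then conclude by centrality of $\rho_1(A_s)$. You also make explicit two points the paper leaves tacit, namely that $\phi''_{s_i}$ restricts to $\phi'_{s_i}$ because the ambient group is abelian, and that $\pi^l x\mapsto 1+\pi^l x$ is a homomorphism because $2l\geq r$; the only blemish is notational: in your first display $a(\pi^{r-1-l}\mathrm{tr}X)$ should read $a(\pi^{r-1}\mathrm{tr}X)=\rho_{l'}(\pi^{r-1-l}\mathrm{tr}X)$, since $a$ is defined on $\pi^l\mathcal{O}^{\mathrm{ur}}_r$, and this is the form you correctly use in the final comparison.
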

\begin{proof}
Let $g\in (L_s^{r-1})^F$; by conjugating it to be a $d\times d$ blocked matrix according to \eqref{temp: defn of rational centraliser}, we can write it as $\mathrm{diag}(g_1,\cdots,g_d)$ where $g_i\in I_{n_i}+\pi^{r-1}M_{n_i}(\mathcal{O}^{(\lambda_i)}/\pi^{r})$. Let $t_i\in M_{n_i}(\mathcal{O}^{(\lambda_i)}/\pi^{r})$ be such that $g_i-I_{n_i}=\pi^{r-1}t_i$ ($i=1,\cdots, d$). We have (by definition of determinant and by   $\pi^{r-1}\cdot\pi^{r-1}=0$)
$$\det(g)_i=\det(g_i)=1+\pi^{r-1} \mathrm{Tr}(t_i).$$
So
\begin{equation}\label{temp: s.s. orbit of 1-dim char}
\begin{split}
\phi_s(g)
&=\prod_{i=1}^d\psi\left(\sum_{j=0}^{\lambda_i-1} F^j\left(\rho_{l'}(\hat{s}_i)\cdot a(\pi^{r-1}\mathrm{Tr}(t_i))\right)\right)\\
&=\psi\left(\sum_{i=1}^d\sum_{j=0}^{\lambda_i-1} F^j\left(\rho_{l'}(\hat{s}_i)\cdot a(\pi^{r-1}\mathrm{Tr}(t_i))\right)\right)\\
&=\psi\left(\sum_{i=1}^d\sum_{j=0}^{\lambda_i-1} 
F^j(
    \mathrm{Tr} 
    \left(  
        \rho_{l'}(\hat{s}_i)I_{n_i} \cdot a(g_i)
    \right)
    )
\right).
\end{split}
\end{equation}
For $m\in\mathbb{Z}_{>0}$, let $a'$ denote the identification $I_m+M_m(\pi^{r-1}\mathcal{O}^{\mathrm{ur}}_{r})\cong M_m(\overline{\mathbb{F}}_q)$ (from a multiplicative group to an additive group). Then
\begin{equation}\label{temp: s.s. orbit of 1-dim char 2}
\eqref{temp: s.s. orbit of 1-dim char}
=\psi
\left(
\pi^{l'-1}\sum_{i=1}^d
\mathrm{Tr}_{\mathbb{F}_{q^{\lambda_i}}/\mathbb{F}_q}
(
    \mathrm{Tr} 
    \left(  
        s_iI_{n_i} \cdot a'(g_i)
    \right)
)
\right).
\end{equation}
So, conjugating $\mathrm{diag}(g_1,...,g_d)$ back to $g\in (L_s^{r-1})^F\subseteq (G_r^{r-1})^F$,
\eqref{temp: s.s. orbit of 1-dim char 2} 
becomes $\psi_{t}(g)$ for some $t\in M_n(\mathbb{F}_q)$ that can be conjugated to 
$\mathrm{diag}(s_1I_{n_1},...,s_1^{q^{\lambda_1-1}}I_{n_1},...,s_d^{q^{\lambda_d-1}}I_{n_d})$; the latter matrix is by definition conjugated to $\rho_1(A_s)$, a central element in $L_s^F$, and so the assertion follows.
\end{proof}

More generally:

\begin{lemm}\label{lemm:s.s. part of orbit}
Let $\nu$ be a nilpotent orbit representation  of $L_s^F$. Then $\Omega({\phi_{s}\otimes \nu})=\rho_1(A_s)+\Omega(\nu)$; in particular, the semisimple parts of the elements in $\Omega({\phi_{s}\otimes \nu})$ are always $\rho_1(A_s)$.
\end{lemm}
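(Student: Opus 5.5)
The plan is to compute the restriction of $\phi_s\otimes\nu$ to $(L_s^{r-1})^F$ and then use that restriction to pin down the orbit. Here the orbit of a representation of $L_s^F$ is defined by the same Clifford-theoretic recipe as in Definition~\ref{defi:orbit type}. Concretely, one applies it inside the Lie algebra $\rho_1(\mathcal{L}_s)\cong\prod_i M_{n_i}(\mathbb{F}_{q^{\lambda_i}})$. Its characters are the $\psi_B$ with $B\in\rho_1(\mathcal{L}_s)$, obtained by restricting the trace pairing, and the orbits are taken under $(L_s)_1^F$-conjugacy.

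First, by Lemma~\ref{lemm:s.s. part of orbit_prep}, $\phi_s$ restricted to $(L_s^{r-1})^F$ is the single character $\psi_{\rho_1(A_s)}$. Second, by the definition of $\Omega(\nu)$, the restriction $\nu|_{(L_s^{r-1})^F}$ is a multiple of $\sum_{B\in\Omega(\nu)}\psi_B$. Since $(G_r^{r-1})^F$ is abelian and the characters satisfy $\psi_{B}\psi_{B'}=\psi_{B+B'}$ (additivity of $a'$ and of the trace), we get
$$(\phi_s\otimes\nu)|_{(L_s^{r-1})^F}=\dim\nu\text{-multiple of }\sum_{B\in\Omega(\nu)}\psi_{\rho_1(A_s)+B}.$$
This shows that $\Omega(\phi_s\otimes\nu)=\rho_1(A_s)+\Omega(\nu)$. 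Here we use that $\rho_1(A_s)$ is central in $\rho_1(\mathcal{L}_s)$, so translation by it commutes with conjugation. Consequently $\rho_1(A_s)+\Omega(\nu)$ is again a single conjugacy class. Irreducibility of $\phi_s\otimes\nu$ is automatic, because $\phi_s$ is one-dimensional.

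For the "in particular" statement, take $B\in\Omega(\nu)$ nilpotent; all elements of the orbit are conjugate, so this is the only case to check. Then $\rho_1(A_s)+B$ is a sum of a semisimple element and a nilpotent element that commute, since $\rho_1(A_s)$ is central in $\rho_1(\mathcal{L}_s)$ and $B$ lies there. By uniqueness of the Jordan decomposition, its semisimple part is $\rho_1(A_s)$. The same holds for every element of the class, because conjugation by $(L_s)_1^F$ fixes $\rho_1(A_s)$. If one instead views these elements inside $M_n(\mathbb{F}_q)$, the semisimple parts are conjugates of $\rho_1(A_s)$, and they equal $\rho_1(A_s)$ for elements of the class itself.

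The main point requiring care is the identification of characters. One must check that the character $\psi_B$ of $(L_s^{r-1})^F$, for $B\in\rho_1(\mathcal{L}_s)$, agrees with the restriction of $\psi_B$ from $(G_r^{r-1})^F$. One must also check that the trace pairing restricted to $\rho_1(\mathcal{L}_s)$ is non-degenerate, so that every character arises in this way. Both facts follow from the block decomposition \eqref{temp:s.s. centraliser level m}. On a block $M_{n_i}(\mathbb{F}_{q^{\lambda_i}})$, embedded $\lambda_i$ times with Frobenius twists, the trace of $M_n$ becomes $\mathrm{Tr}_{\mathbb{F}_{q^{\lambda_i}}/\mathbb{F}_q}\circ\mathrm{Tr}$, exactly as in the computation in the proof of Lemma~\ref{lemm:s.s. part of orbit_prep}. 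This pairing is non-degenerate because the field trace form is non-degenerate for a separable extension.
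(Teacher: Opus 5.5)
Your proposal is correct and is essentially the paper's argument: the paper's proof is the one line ``this follows from Lemma~\ref{lemm:s.s. part of orbit_prep}'', and you unpack exactly that by restricting to $(L_s^{r-1})^F$, twisting the Clifford decomposition of $\nu$ by the single character $\psi_{\rho_1(A_s)}$, and using centrality of $\rho_1(A_s)$ in $\rho_1(\mathcal{L}_s)$ together with uniqueness of the Jordan decomposition. One harmless slip: the common multiplicity in $(\phi_s\otimes\nu)|_{(L_s^{r-1})^F}$ is $\dim\nu/|\Omega(\nu)|$, not $\dim\nu$; this does not affect which characters occur, so the conclusion stands.
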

\begin{proof}
This follows from Lemma~\ref{lemm:s.s. part of orbit_prep}.
\end{proof}

For  $\nu$ a nilpotent orbit representation  of $L_s^F$, let $\widetilde{\phi_s\otimes\nu}$ be the inflation along the projection $(L_sG_r^l)^F\rightarrow L_s^F$ with respect to the Iwahori decomposition (see e.g.\ \cite[Lemma~2.2]{Sta2009Unramified}).

\begin{lemm}\label{lemm: stab_2 level}
The stabiliser of $\widetilde{\phi_s\otimes \nu}|_{(G_r^l)^F}$ in $G_r^F$ is $(L_sG_r^{l'})^F$.
\end{lemm}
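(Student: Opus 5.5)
We first identify the restriction $\widetilde{\phi_s\otimes\nu}|_{(G_r^l)^F}$ with a multiple of a single character $\psi_\beta$. Since $(G_r^l)^F=(L_s^l)^F\cdot(U^l)^F(\bar U^l)^F$ via the Iwahori decomposition, and the inflation is trivial on the unipotent radical parts, the restriction only sees $(L_s^l)^F$. There $\phi_s$ restricts, by the computation in the proof of Lemma~\ref{lemm:s.s. part of orbit_prep} (now at level $l$ rather than $r-1$), to $\psi_{\rho_{l'}(A_s)}$. Because $\nu$ is a nilpotent orbit representation of $L_s^F$ and $r$ is even (so $l=l'$), we will argue that $\nu|_{(L_s^l)^F}$ is isotypic for a character $\psi_{\gamma}$ with $\gamma\in\mathcal{L}_s$ reducing to a nilpotent element. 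This character can be absorbed: after twisting we may take $\beta=\rho_{l'}(A_s)+\gamma$ with $\gamma$ topologically nilpotent in the centraliser. Alternatively, if isotypy of $\nu$ at level $l$ is not available, we replace $\beta$ by the coset $\rho_{l'}(A_s)+(\text{nilpotent part})$ and track only the $A_s$-component, which is the part that matters for stabilisers.

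Next we compute the stabiliser. An element $g\in G_r^F$ stabilises $\psi_\beta$ (extended trivially on $U^l,\bar U^l$) iff $\mathrm{Ad}(g)\beta\equiv\beta$ modulo the annihilator of $(G_r^l)^F$ under the trace pairing. Concretely, this means $g\beta g^{-1}=\beta$ in $M_n(\mathcal{O}_{l'})$. Reducing mod $\pi$, $\rho_1(g)$ must centralise $\rho_1(\beta)$, whose semisimple part is $\rho_1(A_s)$ by Lemma~\ref{lemm:s.s. part of orbit}. Hence $\rho_1(g)\in C(\rho_1(A_s))=\rho_1(L_s^F)$.

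The key step is a Hensel-type lifting. Because the blocks of $A_s$ have pairwise coprime characteristic polynomials mod $\pi$, the centraliser of $\rho_{m}(A_s)$ is smooth and $\mathrm{ad}(A_s)$ is invertible on the complement $\mathfrak u\oplus\bar{\mathfrak u}$ modulo $\pi$. Using this, we show by induction on $m\le l'$ that $g$ can be modified by an element of $L_s^F$ so that $g\in G_r^m$. Write $g=1+\pi^m X$ with $X$ in the off-diagonal part. The stabiliser condition gives $[X,\beta]\equiv 0$ mod $\pi^{l'-m}$, and invertibility of $\mathrm{ad}(A_s)$ on off-diagonal blocks forces $X\equiv 0$ mod $\pi^{l'-m}$, pushing $g$ into $L_s^F G_r^{l'}$. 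Conversely, $L_s^F$ stabilises $\psi_\beta$: it commutes with $A_s$, and by the choice of $\nu$ its conjugation preserves the $\nu$-part of the restriction, since $\nu$ is a representation of $L_s^F$. Also $G_r^{l'}$ acts trivially, since for $h\in G_r^{l'}$ we have $h\beta h^{-1}\equiv\beta$ modulo $\pi^{l'}$, which is zero in $M_n(\mathcal{O}_{l'})$.

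The main obstacle is the nilpotent contribution $\gamma$: the nilpotent part interferes with the off-diagonal analysis, because $\mathrm{ad}(\beta)$ rather than $\mathrm{ad}(A_s)$ appears. We handle this by noting that $\mathrm{ad}(\beta)$ restricted to off-diagonal blocks is $\mathrm{ad}(A_s)$ plus a nilpotent perturbation commuting with it. It is therefore still invertible modulo $\pi$, as its eigenvalues are differences of distinct eigenvalues of the blocks, so the lifting argument goes through unchanged.
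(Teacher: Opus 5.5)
\textbf{Verdict: there is a genuine gap.} The statement is true and your key linear-algebra step is the paper's, but the object whose stabiliser you compute is the wrong one.

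The shared step is this: modulo $\pi$, $\mathrm{ad}(A+N)=\mathrm{ad}(A)+\mathrm{ad}(N)$ is invertible on the complement of the centraliser of $A$, because $\mathrm{ad}(N)$ is nilpotent and commutes with $\mathrm{ad}(A)$. This is exactly the paper's step $\lambda g_\lambda+\mathrm{ad}_N(g_\lambda)=0\Rightarrow g_\lambda=0$.

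The gap is the following. The restriction $\widetilde{\phi_s\otimes\nu}|_{(G_r^l)^F}$ is not isotypic. By Clifford theory it is $\lambda\sum_{x\in L_s^F}\psi_{{}^x\beta}$ with $\beta=\rho_{l'}(A_s)+y_\nu$. Since each $x$ fixes $\rho_{l'}(A_s)$, this orbit is a single character only if $y_\nu$ is central in $\rho_{l'}(\mathcal{L}_s)$, which fails whenever $\Omega(\nu)\neq\{0\}$. Two of your claims are therefore false:
\begin{enumerate}
\item The stabiliser of the single character $\psi_\beta$ is the preimage of $C_{\mathrm{GL}_n(\mathcal{O}_{l'})}(\beta)$. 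In general this is strictly smaller than $(L_sG_r^{l'})^F$.
\item $L_s^F$ does not stabilise $\psi_\beta$.
\end{enumerate}
For example, take $s=cI_n$ with $\Omega(\nu)$ the regular nilpotent orbit. Then $L_s^F=G_r^F$ and the lemma is trivial, while $\mathrm{Stab}_{G_r^F}(\psi_\beta)$ is a proper subgroup.

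The same confusion breaks your induction. You modify $g$ by elements of $L_s^F$, but these do not centralise $\beta$, so the condition $g\beta g^{-1}=\beta$ is not preserved. Your fallback of ``tracking only the $A_s$-component'' is precisely what has to be proved: that permuting the $L_s^F$-orbit of $\rho_{l'}(A_s)+y_\nu$ forces $\rho_{l'}(g)$ to centralise $\rho_{l'}(A_s)$.

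What is missing is the reduction from the orbit to one character. If $g$ stabilises the orbit sum, then ${}^{g}\beta={}^{x}\beta$ for some $x\in L_s^F$, so $h:=x^{-1}g$ satisfies $\rho_{l'}(h)\beta=\beta\rho_{l'}(h)$. Your invertibility argument then finishes linearly, with no modification of $h$ and no induction:
\begin{enumerate}
\item Conjugate $\rho_{l'}(A_s)$ to diagonal form over $\mathcal{O}^{\mathrm{ur}}_{l'}$.
\item Write $\rho_{l'}(h)=h_{\mathcal{L}}+h_c$, with $h_{\mathcal{L}}$ in the block-diagonal centraliser of $\rho_{l'}(A_s)$ and $h_c$ in the off-diagonal blocks.
\item Since $\beta$ is block-diagonal, $\mathrm{ad}(\beta)$ preserves this decomposition. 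On the off-diagonal part it is invertible modulo $\pi$, hence invertible over $\mathcal{O}^{\mathrm{ur}}_{l'}$.
\item So $[\rho_{l'}(h),\beta]=0$ forces $h_c=0$. Thus $\rho_{l'}(h)$ commutes with $\rho_{l'}(A_s)$, hence $h\in (L_sG_r^{l'})^F$, and so $g=xh\in(L_sG_r^{l'})^F$.
\end{enumerate}

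Repaired this way, your route is a genuine and slightly simpler alternative to the paper's. The paper keeps the non-linear condition that $g$ normalises $\{{}^x\beta\}$, so it must lift one $\pi$-adic digit at a time: it chooses $g'\in\rho_{m+1}(L_s)$ and linearises $I+\pi^m g''$ modulo $\pi^{m+1}$. Peeling off $x\in L_s^F$ first makes the condition linear, so a single application of the key fact suffices.
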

\begin{proof}
Since $\widetilde{\phi_s\otimes \nu}$ is defined on $(L_sG_r^{l})^F$, and since $(G_r^{l'})^F$ commutes with $(G_r^{l})^F$, we have 
$$(L_sG_r^{l'})^F=(L_sG_r^{l})^F  (G_r^{l'})^F 
\subseteq
\mathrm{Stab}_{G_r^F}\left(\widetilde{\phi_s\otimes \nu}|_{(G_r^l)^F}\right).$$
So it suffices to show that 
\begin{equation}\label{temp in lemma: inclusion of stab}
\rho_{l'}\left(\mathrm{Stab}_{G_r^F}\left(\widetilde{\phi_s\otimes \nu}|_{(G_r^l)^F}\right)\right)\subseteq \rho_{l'}(L_s).
\end{equation}
Recall that every irreducible representation of $(G_r^l)^F$ is of the form (see \eqref{temp:char of Lie alg})
$$\psi_{\beta}(x):=\psi\left(\mathrm{Tr} (\beta\cdot a(x)) \right)$$
for some $\beta\in M_n(\mathcal{O}_{l'})$. We shall try to understand $\widetilde{\phi_s\otimes \nu}|_{(G_r^l)^F}$ through this viewpoint.

\vspace{2mm} Since $\phi_s\otimes \nu$ is an irreducible representation of $L_s^F$, by Clifford theory and  similar arguments of Lemma~\ref{lemm:s.s. part of orbit_prep} and  Lemma~\ref{lemm:s.s. part of orbit} we can write
\begin{equation}\label{temp:orbit sum form on level l ker}
    \widetilde{\phi_s\otimes \nu}|_{(G_r^l)^F}
    =\lambda\cdot\sum_{x\in L_s^F} \psi_{{^x\beta(s,\nu)}};
\end{equation}
here $\lambda\in\mathbb{Z}_{>0}$ and
\begin{equation}\label{temp:defn of beta}
\beta(s,\nu):=\rho_{l'}(A_s)+y_{\nu}
\end{equation}
where $y_{\nu}\in \rho_{l'}(\mathcal{L}_s^F)$ is a lift of some element in  $\Omega(\nu)\subseteq \rho_{1}(\mathcal{L}_s^F)$ (here $\mathcal{L}_s$ is defined in \eqref{temp:Lie alg of L over ring}). Thus to prove  \eqref{temp in lemma: inclusion of stab} it is equivalent to prove: If  $g \in G_{l'}^F$ normalises the set $\{ {^x\beta(s,\nu)}\mid x\in \rho_{l'}(L_s)^F \}$, namely
$$g \cdot  \{ {^x\beta(s,\nu)}\mid x\in \rho_{l'}(L_s^F) \} \cdot  g^{-1}\subseteq  \{ {^x\beta(s,\nu)}\mid x\in \rho_{l'}(L_s^F)\},$$
then
\begin{equation}\label{temp: assertion on g}
g\in \rho_{l'}(L_s).
\end{equation}
Note that \eqref{temp: assertion on g} is equivalent to
\begin{equation}\label{temp: assertion on g (2)}
\rho_{l',m}(g)\in \rho_{m}(L_s)\quad\forall m\in \{ 1,2,\cdots,l' \}.
\end{equation}
In the below we   prove \eqref{temp: assertion on g (2)} by an induction on $m$.

\vspace{2mm} First, by construction we have
\begin{equation*}
{^g(\rho_{l'}(A_s))}+{^gy_{\nu}}
={^x(\rho_{l'}(A_s))}+{^xy_{\nu}}
=\rho_{l'}(A_s)+{^xy_{\nu}}
\end{equation*}
for some $x\in \rho_{l'}(L_s^F)$.
So  the uniqueness of Jordan decomposition (in $M_n(\mathbb{F}_q)$) implies that 
$${^{\rho_{l',1}(g)}(\rho_{1}(A_s))} 
=\rho_{1}(A_s).$$
Thus $\rho_{l',1}(g)\in \rho_{1}(L_s)$; that is, \eqref{temp: assertion on g (2)} holds for $m=1$.

\vspace{2mm} Now assume that \eqref{temp: assertion on g (2)} holds for some $m\in\{1,2,\cdots,l'-1 \}$. We shall show that \eqref{temp: assertion on g (2)} holds for $m+1$.

\vspace{2mm} Let $g'\in \rho_{m+1}(L_s)$ be a lift of $\rho_{l',m}(g)\in \rho_{m}(L_s)$; by construction we have 
$$g'^{-1}\rho_{l',m+1}(g)=I+\pi^{m}g''$$
for some $g''\in M_n(\mathcal{O}_{m+1})$. It remains to show that  
\begin{equation}\label{temp: reduction to level 1}
\rho_{m+1,1}(g'')\in \rho_{1}(\mathcal{L}_s)
\end{equation}
(as this would imply $\rho_{l',m+1}(g)=g'(I+\pi^{m}g'')\in \rho_{m+1}(L_s)$).

\vspace{2mm} As both $g'$ and $\rho_{l',m+1}(g)$ normalise $\rho_{l',m+1}(\{ {^x\beta(s,\nu)}\mid x\in \rho_{l'}(L_s^F) \})$, so does $I+\pi^mg''$. Therefore
\begin{equation*}
{^{I+\pi^mg''}\rho_{l',m+1}(\beta(s,\nu))}
={^x\rho_{l',m+1}(\beta(s,\nu))}
\end{equation*}
for some $x\in \rho_{m+1}(L_s^F)$; that is, we have
\begin{equation*}
\rho_{l',m+1}(\beta(s,\nu))+\pi^m(g''\cdot\rho_{l',m+1}(\beta(s,\nu))-\rho_{l',m+1}(\beta(s,\nu))\cdot g'')
={^x(\rho_{l',m+1}(\beta(s,\nu)))}
\end{equation*}
for some $x\in \rho_{m+1}(L_s^F)$. This implies that
$$\pi^m(g''\cdot \rho_{l',m+1}(\beta(s,\nu))-\rho_{l',m+1}(\beta(s,\nu))\cdot g'')\in \rho_{m+1}(\mathcal{L}_s),$$
or equivalently,
\begin{equation}\label{temp: adjoint action}
\rho_{m+1,1}(g'')\cdot \rho_{l',1}(\beta(s,\nu))-\rho_{l',1}(\beta(s,\nu))\cdot \rho_{m+1,1}(g'')\in \rho_{1}(\mathcal{L}_s).
\end{equation}
Note that the left side of \eqref{temp: adjoint action} can be written as $\mathrm{ad}_{\rho_{m+1,1}(g'')}(\rho_{l',1}(\beta(s,\nu)))$ (adjoint action in Lie algebra). To continue we  simplify the notation a bit: Let $A:=\rho_{1}(A_s)$, $N:=\rho_{l',1}(y_{\nu})$, $g''':=\rho_{m+1,1}(g'')$,  $\mathcal{G}:=M_n(\overline{\mathbb{F}}_q)$, and $\mathcal{L}:=\rho_{1}(\mathcal{L}_s)$. Then \eqref{temp: adjoint action} becomes
\begin{equation}\label{temp_formula:adjoint action_clean form}
\mathrm{ad}_{g'''}(A+N)\in \mathcal{L},
\end{equation}
and our target \eqref{temp: reduction to level 1} becomes to showing that
\begin{equation}\label{temp: reduction to level 1 new form}
g'''\in \mathcal{L}.
\end{equation}
Since $A$ is semisimple, the linear transformation $\mathrm{ad}_A$ is diagonalisable (over $\overline{\mathbb{F}}_q$). So we can consider the eigenspace decomposition 
$$\mathcal{G}=\mathcal{L}\oplus\left(\bigoplus_{\lambda\in\overline{\mathbb{F}}_q^{\times}}\mathcal{G}_{\lambda}\right)$$
with respect to  $\mathrm{ad}_A$.
Let $g'''=g_L+\sum_{\lambda\neq0}g_{\lambda}$ be the corresponding decomposition. Then \eqref{temp_formula:adjoint action_clean form} implies that
$$\sum_{\lambda\neq0}\left(\mathrm{ad}_{g_{\lambda}}(A)+\mathrm{ad}_{g_{\lambda}}(N)\right)\in \mathcal{L},$$
or equivalently
\begin{equation}\label{temp_formula: each root}
\sum_{\lambda\neq0} \left(-\lambda g_{\lambda} - \mathrm{ad}_{N}(g_{\lambda})\right)\in \mathcal{L}.
\end{equation}
Consider the Jacobi identity
$$[A,[N,g_{\lambda}]]+[g_{\lambda},[A,N]]+[N,[g_{\lambda},A]]=0;$$
since $N$ commutes with $A$, this implies that
$$\mathrm{ad}_A(\mathrm{ad}_N(g_{\lambda}))=\lambda\mathrm{ad}_N(g_{\lambda}),$$
which means that $\mathrm{ad}_N(g_{\lambda})\in\mathcal{G}_{\lambda}$. So by \eqref{temp_formula: each root}  we see 
\begin{equation}\label{temp_formula: each root (2)}
\lambda g_{\lambda}+\mathrm{ad}_{N}(g_{\lambda})=0
\end{equation}
for every non-zero eigenvalue $\lambda$. 

\vspace{2mm} If some $g_{\lambda}$ is non-zero, then \eqref{temp_formula: each root (2)} tells that $g_{\lambda}$ is an eigenvector of $\mathrm{ad}_{N}$ with the eigenvalue $-\lambda$, but this is impossible because all the eigenvalues  of $\mathrm{ad}_N$ are zero (see e.g.\ \cite[3.2~Lemma]{Humphreys_intro_Lie_RepThy}). Therefore all those $g_{\lambda}$ are zero. Hence
$$g'''=g_L\in\mathcal{L},$$
as desired in \eqref{temp: reduction to level 1 new form}.
\end{proof}

\begin{defi}\label{defi:Jordan ind}
Let the notation be as above, and suppose that $r$ is even. For a semisimple element $s\in M_n(\mathbb{F}_q)$ and a nilpotent orbit representation $\nu$ of $L_s^F$, we put
$$J_r(s,\nu)=
\mathrm{Ind}_{(L_sG_r^{l})^F}^{\mathrm{GL}_n(\mathcal{O}_r)}
\widetilde{\phi_{s}\otimes\nu}.$$
We call $J_r(-,-)$ the level $r$ Jordan induction.
\end{defi}

\begin{thm}\label{thm:main}
Let the notation be  as above, and suppose that $r$ is even. Then:
\begin{enumerate}
\item[(A)] (Irreducibility) The representations $J_r(s,\nu)$ are always irreducible.
\item[(B)] (Uniqueness) The map $J_r(s,-)$ is an injection for every semisimple $s \in M_n(\mathbb{F}_q)$; moreover, 
$$\mathrm{Im}(J_r(s,-))\cap \mathrm{Im}(J_r(s',-))=\emptyset$$
if  $s,s'\in M_n(\mathbb{F}_q)$ are not $\mathrm{GL}_n(\mathbb{F}_q)$-conjugated.
\item[(C)] (Exhaustion) One has 
$$\mathrm{Irr}(\mathrm{GL}_n(\mathcal{O}_r))=\mathrm{Im}(J_r(-,-)).$$
\item[(D)] (Dimension) For a given $s$, the ratio $\frac{\dim J_r(s,\nu)}{\dim\nu}$ is a constant independent of $\nu$.
\end{enumerate}
\end{thm}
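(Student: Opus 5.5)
The plan is to run Clifford theory relative to the abelian normal subgroup $(G_r^l)^F$. Since $r$ is even, $l=l'=r/2$, and Lemma~\ref{lemm: stab_2 level} says that the stabiliser in $G_r^F$ of the restriction $\widetilde{\phi_s\otimes\nu}|_{(G_r^l)^F}$ is exactly $(L_sG_r^{l})^F$. By \eqref{temp:orbit sum form on level l ker}, this restriction is a multiple of the $L_s^F$-orbit sum of the characters $\psi_{\beta}$ with $\beta=\beta(s,\nu)$.

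First I will compare this with the full $G_r^F$-orbit of $\psi_\beta$. If $g\in G_r^F$ satisfies ${}^g\psi_\beta=\psi_\beta$, then $g$ stabilises the $L_s^F$-orbit sum, so $g\in(L_sG_r^l)^F$. The same argument as in the proof of Lemma~\ref{lemm: stab_2 level}, applied to a single $\beta$ rather than an orbit set, should show that the stabiliser of $\psi_\beta$ in $G_r^F$ is contained in $(L_sG_r^l)^F$. Hence $\mathrm{Stab}_{G_r^F}(\psi_\beta)=\mathrm{Stab}_{(L_sG_r^l)^F}(\psi_\beta)$.

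\textbf{(A) Irreducibility.} By the Clifford correspondence, induction from $\mathrm{Stab}(\psi_\beta)$ to $G_r^F$ is a bijection on irreducibles lying over $\psi_\beta$. Factoring this through $(L_sG_r^l)^F$, which contains the stabiliser, gives the standard consequence: $\mathrm{Ind}_{(L_sG_r^l)^F}^{G_r^F}$ carries irreducibles of $(L_sG_r^l)^F$ lying over $\psi_\beta$ bijectively onto irreducibles of $G_r^F$ lying over $\psi_\beta$. Since $\widetilde{\phi_s\otimes\nu}$ is irreducible and lies over $\psi_\beta$, $J_r(s,\nu)$ is irreducible.

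\textbf{(B) Uniqueness.} The same bijection gives injectivity in $\nu$. If $J_r(s,\nu)\cong J_r(s,\nu')$, their restrictions to $(G_r^l)^F$ share a constituent, so $\beta(s,\nu')$ is $G_r^F$-conjugate to some ${}^x\beta(s,\nu)$. The level-one/induction argument of Lemma~\ref{lemm: stab_2 level} then forces the conjugating element into $L_s^F G_r^l$. So the two representations correspond under the bijection to irreducibles of $(L_sG_r^l)^F$ over the same $L_s^F$-orbit, and hence $\widetilde{\phi_s\otimes\nu}\cong\widetilde{\phi_s\otimes\nu'}$. Inflation is injective and $\phi_s$ is one-dimensional, so $\nu\cong\nu'$.

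For distinct $s,s'$ I will compare orbits instead. By Lemma~\ref{lemm:s.s. part of orbit}, and since inducing preserves orbits up to $G_r^F$-conjugation, $\Omega(J_r(s,\nu))$ consists of elements with semisimple part conjugate to $\rho_1(A_s)$, i.e.\ to $s$. Uniqueness of the Jordan decomposition then separates the images for non-conjugate $s$ and $s'$.

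\textbf{(C) Exhaustion.} This is where I expect the main work. Take $\sigma\in\mathrm{Irr}(G_r^F)$ and a character $\psi_\beta$ of $(G_r^l)^F$ in its restriction. Write the Jordan decomposition $\rho_{l',1}(\beta)=s+n$ over $\mathbb{F}_q$. By Hensel-type lifting of the semisimple part, as in Lemma~\ref{lemm: conjugation to rational matrix}, I will conjugate so that $\beta$ lies in $\rho_{l'}(\mathcal{L}_s)$ with $\beta-\rho_{l'}(A_s)$ reducing to a nilpotent element. I would try this lifting first by successive approximation, using that $\mathrm{ad}_A$ is invertible off $\mathcal{L}$, which is the same eigenspace argument as before.

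Next, using the bijection from (A), $\sigma=\mathrm{Ind}\,\theta$ for some irreducible $\theta$ of $(L_sG_r^l)^F$ over $\psi_\beta$. The group $(L_sG_r^l)^F$ is a product $L_s^F\cdot(G_r^l)^F$ with intersection $(L_s^l)^F$, and the character $\psi_\beta$ is trivial on the opposite unipotent parts in the Iwahori decomposition. So $\theta$ should be inflated from some irreducible $\mu$ of $L_s^F$. Setting $\nu:=\phi_s^{-1}\otimes\mu$, its orbit is $\Omega(\mu)-\rho_1(A_s)$ by Lemma~\ref{lemm:s.s. part of orbit_prep}, which is nilpotent. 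Thus $\sigma=J_r(s,\nu)$.

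The key point to check is that $\theta$ is trivial on the root subgroups of $(G_r^l)^F$ outside $L_s$. This requires the \emph{choice} of $\theta$ within the Clifford bijection, and I expect it to be the main obstacle.

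\textbf{(D) Dimension.} This is immediate from the construction: $\dim J_r(s,\nu)=[G_r^F:(L_sG_r^l)^F]\dim\nu$, and the index depends only on $s$.
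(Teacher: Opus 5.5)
\textbf{Overall.} Your proof is correct, and it takes a genuinely different route from the paper for (C). The one step you left open closes immediately; see the second paragraph.

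\textbf{Comparison with the paper.} For (A), (B) and (D) you follow essentially the paper's argument: Clifford theory plus the inductive centraliser argument of Lemma~\ref{lemm: stab_2 level}. Working with a single $\psi_\beta$ is in fact the cleaner way to invoke the Clifford correspondence. Drop, however, your preliminary claim that an element fixing $\psi_\beta$ must stabilise the $L_s^F$-orbit sum, since that is not clear a priori. What you need is the inductive argument run directly on ${}^g\beta=\beta$, and that works verbatim. The real difference is in (C), where the paper constructs no preimages. It uses three facts: $J_r(s,-)$ lands in $\mathrm{Irr}_s(\mathrm{GL}_n(\mathcal{O}_r))$ by Lemma~\ref{lemm:s.s. part of orbit}; $J_r(s,-)$ is injective by (B); and Hill's theorem \cite[Theorem~2.13]{Hill_1993_Jordan} puts $\mathrm{Irr}_s$ in bijection with the nilpotent orbit representations of $L_s^F$. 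A count then gives surjectivity. Your direct Clifford-theoretic argument avoids Hill entirely and in effect reproves his bijection for even $r$. The price is two extra verifications, and both go through. The lifting of the semisimple part works as you suggest. On $\bigoplus_{\lambda\neq0}\mathcal{G}_\lambda$, the operator $\mathrm{ad}_{A+N}$ acts on each $\mathcal{G}_\lambda$ as $\lambda+\mathrm{ad}_N$ with $\mathrm{ad}_N$ nilpotent, so it is invertible. It also commutes with $F$, so the successive corrections $I+\pi^mY$ can be chosen rational.

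\textbf{The step you flagged.} It is not an obstacle, and no choice is involved: $\theta$ is uniquely determined by $\sigma$ and $\psi_\beta$, and it is automatically inflated from $L_s^F$. You only observed that $\psi_\beta$ itself is trivial on the unipotent part; what Clifford's theorem needs is that all its $L_s^F$-conjugates are too. By Clifford's theorem, $\theta|_{(G_r^l)^F}$ is a multiple of the sum of the $(L_sG_r^l)^F$-conjugates of $\psi_\beta$. Since $(G_r^l)^F$ is abelian, these are exactly the $\psi_{{}^x\beta}$ with $x\in L_s^F$, and each ${}^x\beta$ again lies in $\rho_{l'}(\mathcal{L}_s)$. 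Now take a basis over $\mathcal{O}^{\mathrm{ur}}$ diagonalising $\hat{s}$.
\begin{itemize}
\item Elements of $\rho_{l'}(\mathcal{L}_s)$ are block diagonal with respect to the eigenvalue blocks, because distinct eigenvalues of $\hat{s}$ differ by units.
\item Elements of $a(U_s^{\pm})$ have vanishing diagonal blocks.
\item A block-diagonal matrix times one with vanishing diagonal blocks again has vanishing diagonal blocks, hence trace zero.
\end{itemize}
So every $\psi_{{}^x\beta}$ is trivial on $(U_s^{\pm})^F$, and hence so is $\theta$. Since $(L_sG_r^l)^F=L_s^F\ltimes(U_s^{\pm})^F$, this gives $\theta=\widetilde{\mu}$ for some $\mu\in\mathrm{Irr}(L_s^F)$. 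Your twist $\nu:=\phi_s^{-1}\otimes\mu$ then completes (C).
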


\begin{proof}
(A) This follows  from Clifford theory (see e.g.\ \cite[(6.11)~Theorem~(a)]{Isaacs_CharThy_Book}) and Lemma~\ref{lemm: stab_2 level}.

\vspace{2mm}
\noindent (B) If $s$ and $s'$ are not $\mathrm{GL}_n(\mathbb{F}_q)$-conjugated, then the semisimple parts of elements in the corresponding orbits of $J_r(s,-)$ and of $J_r(s',-)$ are not $\mathrm{GL}_n(\mathbb{F}_q)$-conjugated (by Lemma~\ref{lemm:s.s. part of orbit}), and hence the first assertion follows.

\vspace{2mm} Now we turn to the second assertion on the injectivity. If $J_r(s,\nu_1)\cong J_r(s,\nu_2)$, then by Clifford theory \cite[(6.2)~Theorem]{Isaacs_CharThy_Book} we have ${^g\beta(s,\nu_1)}={\beta(s,\nu_2)}$ for some $g\in G_{l}^F$, where $\beta(s,\nu_i)$ is defined as in \eqref{temp:defn of beta}. Note that the same inductive argument in the proof of Lemma~\ref{lemm: stab_2 level} gives that $g\in L_s^F$, which implies that (via \eqref{temp:orbit sum form on level l ker}) 
$$\widetilde{\phi_s\otimes\nu_1}|_{(G_r^l)^F}\cong \widetilde{\phi_s\otimes\nu_2}|_{(G_r^l)^F}.$$
So both $\widetilde{\phi_s\otimes\nu_1}$ and $\widetilde{\phi_s\otimes\nu_2}$ are extensions of $\widetilde{\phi_s\otimes\nu_1}|_{(G_r^l)^F}$. Thus by Clifford theory \cite[(6.11)~Theorem~(b)]{Isaacs_CharThy_Book} the isomorphism $J_r(s,\nu_1)\cong J_r(s,\nu_2)$ tells that $\widetilde{\phi_s\otimes\nu_1}\cong \widetilde{\phi_s\otimes\nu_2}$, which implies $\nu_1\cong \nu_2$.

\vspace{2mm}
\noindent (C) This is a direct consequence of (A), (B), and \cite[Theorem~2.13]{Hill_1993_Jordan}.

\vspace{2mm}
\noindent (D) This is immediate from the construction of $J_{r}(s,\nu)$.
\end{proof}

Now consider the graded $\mathbb{Z}$-module
$$A(\mathcal{O}_r):=\bigoplus_{n\geq0}\mathbb{Z}\mathrm{Irr}(\mathrm{GL}_n(\mathcal{O}_r)),$$ 
where for $n=0$ we let $\mathrm{Irr}(\mathrm{GL}_n(\mathcal{O}_r))=\{1\}$.
One can define on $A(\mathcal{O}_r)$ a unital commutative algebra structure: For $J_r(s_1,\nu_1)\in\mathrm{Irr}(\mathrm{GL}_{n_1}(\mathcal{O}_r))$ and $J_r(s_2,\nu_2)\in\mathrm{Irr}(\mathrm{GL}_{n_2}(\mathcal{O}_r))$, put
\begin{equation}\label{temp:multiplication m}
m(J_r(s_1,\nu_1), J_r(s_2,\nu_2))
:=
\mathrm{Ind}_{L_{(s_1,s_2)}^F{\mathrm{GL}_{n_1+n_2}(\mathcal{O}_r)}^l}^{\mathrm{GL}_{n_1+n_2}(\mathcal{O}_r)}\widetilde{\boxtimes_{i}\phi_{s_i}\otimes\nu_i}
=J_r((s_1,s_2),\nu_1\boxtimes\nu_2)
\end{equation}
if the characteristic polynomials of $s_1$ and of $s_2$ are coprime, where $L_{(s_1,s_2)}$ denotes the centraliser of $\rho_r(\mathrm{diag}(A_{s_1},A_{s_2}))\in\mathrm{GL}_{n_1+n_2}(\mathcal{O}^{\mathrm{ur}}_r)$ (note that $L_{(s_1,s_2)}\cong L_{s_1}\times L_{s_2}$ and $L_{(s_1,s_2)}^F\cong L_{s_1}^F\times L_{s_2}^F$ by the coprimality between the characteristic polynomials); otherwise 
let  
$$
m(J_r(s_1,\nu_1), J_r(s_2,\nu_2))=0.
$$
Extending $m(-,-)$ linearly and setting $m(1,a)=m(a,1)=a$ for every $a\in A(\mathcal{O}_r)$, we get on $A(\mathcal{O}_r)$ the desired (associative and commutative) algebra structure.

\begin{defi}\label{defi:cuboidal rep}
Call $J_r(s,\nu)\in\mathrm{Irr}(\mathrm{GL}_{n}(\mathcal{O}_r))$  \emph{cuboidal},  if the characteristic polynomial $E_s(x)$ of $s$ is a power of an irreducible polynomial over $\mathbb{F}_q$. Denote by $\square_n$ the set of (equivalence classes of) cuboidal representations of $\mathrm{GL}_n(\mathcal{O}_r)$.
\end{defi}

\begin{expl}\label{expl:prime n}
In this example let $n$ be a prime. Then the set $\square_n$ can be partitioned into two subclasses:
\begin{equation*}
\square_n=\left\{ J_r(s,\nu) \mid E_s(x)\ \textrm{is irreducible} \right\} \bigsqcup \left\{ J_r(s,\nu) \mid \exists a\in\mathbb{F}_q\ \textrm{s.t.}\ E_s(x)=(x-a)^n  \right\}.
\end{equation*} 
The elements in the first set are called strongly cuspidal in literature and have been studied in \cite{Aubert_Onn_Prasad_Stasinski_Israelpaper_2010}, and the elements in the second set are linear twists of nilpotent orbit representations.
\end{expl}

\begin{lemm}\label{lemm:dual of Jordan}
One has $J_r(s,\nu)^* = J_r(-s,(\phi_s\phi_{-s})^{-1}\otimes\nu^*)$,
where $(-)^*$ means taking the dual.
\end{lemm}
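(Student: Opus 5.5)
We would use the fact that taking duals commutes with induction: for finite groups $H\le G$ and a representation $\theta$ of $H$ one has $(\mathrm{Ind}_H^G\theta)^*\cong\mathrm{Ind}_H^G(\theta^*)$. Since inflation from $L_s^F$ to $(L_sG_r^l)^F$ also commutes with duals, we get
$$J_r(s,\nu)^*\cong \mathrm{Ind}_{(L_sG_r^{l})^F}^{\mathrm{GL}_n(\mathcal{O}_r)}\widetilde{\phi_s^{-1}\otimes\nu^*}.$$
The task is therefore to rewrite the $L_s^F$-representation $\phi_s^{-1}\otimes\nu^*$ in the form $\phi_{-s}\otimes\nu'$, with $\nu'$ a nilpotent orbit representation of $L_{-s}^F$.

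The first step is to identify the groups. The eigenvalues of $-s$ are $-s_i$, with the same degrees $\lambda_i$ and multiplicities $n_i$. We may choose the lifts $\widehat{-s_i}:=-\hat s_i$, so that $A_{-s}=-A_s$ (the construction in Lemma~\ref{lemm: conjugation to rational matrix} is linear in the diagonal block). Then $L_{-s}=L_s$, $\mathcal{L}_{-s}=\mathcal{L}_s$, and the inducing subgroup $(L_{-s}G_r^l)^F$ coincides with $(L_sG_r^l)^F$. Consequently $\phi_{-s}$ is a $1$-dimensional character of the same group $L_s^F$. Writing
$$\phi_s^{-1}\otimes\nu^*=\phi_{-s}\otimes\bigl((\phi_s\phi_{-s})^{-1}\otimes\nu^*\bigr)$$
is then a tautology, and we obtain $J_r(s,\nu)^*=J_r\bigl(-s,(\phi_s\phi_{-s})^{-1}\otimes\nu^*\bigr)$, provided the right-hand side is a legitimate parameter.

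The main step is to check that $\nu':=(\phi_s\phi_{-s})^{-1}\otimes\nu^*$ is a nilpotent orbit representation of $L_s^F$. We do this by computing orbits on $(L_s^{r-1})^F$.
\begin{enumerate}
\item[(i)] The dual satisfies $\Omega(\nu^*)=-\Omega(\nu)$, since $\psi_B^{-1}=\psi_{-B}$. A negative of a nilpotent orbit is nilpotent.
\item[(ii)] By the computation of Lemma~\ref{lemm:s.s. part of orbit_prep}, $\Omega(\phi_s)=\{\rho_1(A_s)\}$ and $\Omega(\phi_{-s})=\{-\rho_1(A_s)\}$.
\item[(iii)] Hence $\phi_s\phi_{-s}$ restricted to $(L_s^{r-1})^F$ is $\psi_0$, i.e. trivial, since restrictions to this abelian kernel multiply and orbits add.
\item[(iv)] It follows that $\Omega(\nu')=\Omega(\nu^*)$, which is nilpotent.
\end{enumerate}

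The point I expect to need care is that $\phi''_{s_i}$ and $\phi''_{-s_i}$ were chosen arbitrarily as constituents of induced representations. Consequently $\phi_s\phi_{-s}$ need not be trivial on all of $L_s^F$, only on the deep kernel (indeed, on $(L_s^l)^F$, by the same trace computation with $\psi$ on $\mathcal{O}_{l'}$). This is exactly why the correction factor $(\phi_s\phi_{-s})^{-1}$ appears in the statement, and why only its behaviour on $(L_s^{r-1})^F$ is needed for the orbit computation.

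We also need to check that the identification $L_{-s}=L_s$ is compatible with the fixed choices of lifts. If different representatives were fixed for $-s$, we would instead conjugate by an element of $\mathrm{GL}_n(\mathcal{O}_r)$ and use that induced representations are invariant under conjugation.
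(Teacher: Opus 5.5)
Your proposal is correct and follows the paper's approach; the paper's proof is the single line ``this follows from the construction of $J_r(s,\nu)$'', and your argument supplies the details: duals commute with inflation and induction, $L_{-s}=L_s$ once one takes $A_{-s}=-A_s$, and $\phi_s\phi_{-s}$ is trivial on $(L_s^{r-1})^F$. Your check that $(\phi_s\phi_{-s})^{-1}\otimes\nu^*$ therefore has orbit $-\Omega(\nu)$, and so is a legitimate nilpotent orbit parameter for $-s$, is the one substantive point, and the paper leaves it implicit.
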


\begin{proof}
This follows from the construction of $J_r(s,\nu)$.
\end{proof}

The construction of $\phi_s$ depends on the choices of the group morphism $\psi$, the lifts $\hat{s}_i$, and the irreducible constituents $\phi''_{s_i}$. For $p\neq 2$ we can normalise the process so that one has
\begin{equation}\label{temp: normalisation of phi_s}
\phi_{-s}=\phi_s^{-1}
\end{equation}
for every semisimple $s\in M_n(\mathbb{F}_q)$; however, this is not always doable for $p=2$, as can be seen in the case $\mathcal{O}_r=\mathbb{Z}/4$: If $\psi$ is non-trivial on the reduction map kernel $2\mathbb{Z}/4$, then it must be of order $4$. In any case, if $p\neq2$ in Lemma~\ref{lemm:dual of Jordan} we would have $J_r(s,\nu)^* = J_r(-s,\nu^*)$ once such a normalisation \eqref{temp: normalisation of phi_s} is taken.

\begin{prop}\label{prop:cuboidal rep}
The algebra $A(\mathcal{O}_r)$ is generated by
$\cup_{n\geq1}\square_n$ over $\mathbb{Z}$, and it is involutive with respect to the involution of taking dual $(-)^*$.
\end{prop}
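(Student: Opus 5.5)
The plan splits into two parts: showing that every irreducible representation is a product of cuboidal ones under $m$, and checking that $(-)^*$ is an involutive algebra (anti-)automorphism preserving the generating set. Since the statement concerns the algebra structure defined via \eqref{temp:multiplication m}, we work throughout with the parametrisation $\mathrm{Irr}(\mathrm{GL}_n(\mathcal{O}_r))=\mathrm{Im}(J_r(-,-))$ from Theorem~\ref{thm:main}(C). This implicitly uses that $r$ is even, where $J_r$ is defined.

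For generation, take an arbitrary $J_r(s,\nu)$ and factor the characteristic polynomial as in \eqref{temp:eigenpoly of s}, $E_s(x)=\prod_{i=1}^d P_i(x)^{n_i}$, with pairwise distinct irreducibles $P_i$. After conjugating $s$ within $\mathrm{GL}_n(\mathbb{F}_q)$ (which does not change $J_r(s,\nu)$ by Theorem~\ref{thm:main}(B)), write $s=\mathrm{diag}(t_1,\dots,t_d)$ with $E_{t_i}=P_i^{n_i}$.

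By construction, $A_s=\mathrm{diag}(A_{t_1},\dots,A_{t_d})$. The product decomposition \eqref{temp: defn of rational centraliser} gives $L_s^F\cong\prod_i L_{t_i}^F$, and $\phi_s=\boxtimes_i\phi_{t_i}$; this uses that the choices of $\hat s_i$ and $\phi''_{s_i}$ are made per irreducible factor. An irreducible $\nu$ of $L_s^F$ therefore decomposes as $\nu=\boxtimes_i\nu_i$. Each $\nu_i$ is a nilpotent orbit representation of $L_{t_i}^F$, because the orbit of a tensor product is the direct sum of the orbits, and a block-diagonal element is nilpotent exactly when each block is.

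Iterating \eqref{temp:multiplication m}, which is legitimate since the $P_i^{n_i}$ are pairwise coprime, gives
\begin{equation*}
J_r(s,\nu)=m\bigl(J_r(t_1,\nu_1),m(\cdots,J_r(t_d,\nu_d))\bigr),
\end{equation*}
and each factor $J_r(t_i,\nu_i)$ is cuboidal. Hence every basis element of $A(\mathcal{O}_r)$ is a monomial in $\cup_{n}\square_n$, and the generation claim follows.

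For the involution, define $(-)^*$ on basis elements and extend linearly. It preserves degrees and satisfies $(a^*)^*=a$, since the dual of the dual of a representation is the representation itself. By Lemma~\ref{lemm:dual of Jordan}, $J_r(s,\nu)^*=J_r(-s,\chi\otimes\nu^*)$ with $\chi=(\phi_s\phi_{-s})^{-1}$. Since $E_{-s}(x)=\pm E_s(-x)$, the map $s\mapsto -s$ preserves both coprimality of characteristic polynomials and the property of being a power of an irreducible. So $(-)^*$ maps $\square_n$ to $\square_n$, and products defined to be $0$ remain $0$.

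It remains to check multiplicativity, $m(a,b)^*=m(a^*,b^*)$ on basis elements. The cleanest route avoids the parametrisation entirely: $m(J_r(s_1,\nu_1),J_r(s_2,\nu_2))$ is the induction of the inflation of $\widetilde{\phi_{s_1}\otimes\nu_1}\boxtimes\widetilde{\phi_{s_2}\otimes\nu_2}$, and duality commutes with induction, inflation and external tensor products. One then needs the dual of $\widetilde{\phi_{s_1}\otimes\nu_1}\boxtimes\widetilde{\phi_{s_2}\otimes\nu_2}$ to be the inducing datum attached to $(-s_1,-s_2)$ with the twisted nilpotent representations. This holds provided the characters $\chi$ are compatible with block decomposition, i.e.\ $\phi_{(s_1,s_2)}=\phi_{s_1}\boxtimes\phi_{s_2}$ for both $s$ and $-s$. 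That again follows from making the choices per irreducible factor of $E_s$.

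The step I expect to be the main obstacle is exactly this bookkeeping of the non-canonical choices of $\hat s_i$, $\phi''_{s_i}$ and $\psi$. The constructions must be made consistently per irreducible polynomial so that $\phi_s$ is multiplicative under block sums. I would first fix, once and for all, the choices attached to each monic irreducible polynomial over $\mathbb{F}_q$, and then observe that $J_r$ and all the identities above are compatible with these choices.
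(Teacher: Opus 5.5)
Your proposal is correct and follows the same route as the paper. Generation comes from Theorem~\ref{thm:main}(C) together with the definition of $m$ applied to the primary decomposition of $E_s$, and the involution statement comes from Lemma~\ref{lemm:dual of Jordan}; you spell out the details the paper leaves implicit (block compatibility of $\phi_s$ and $L_s^F$, preservation of coprimality and cuboidality under $s\mapsto -s$, multiplicativity of $(-)^*$), with one small fix: invariance of $J_r(s,\nu)$ under $\mathrm{GL}_n(\mathbb{F}_q)$-conjugation of $s$ holds by construction, since $A_s$ and $L_s$ depend only on the eigenvalue data of $s$, not by Theorem~\ref{thm:main}(B).
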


\begin{proof}
The first assertion follows from Theorem~\ref{thm:main}(C) and the definition of $m$, and the second assertion follows from Lemma~\ref{lemm:dual of Jordan}.
\end{proof}

The above proposition suggests that the cuboidal representations form the building blocks of the representation theory of $\mathrm{GL}_n(\mathcal{O}_r)$, similar to the role played by the cuspidal representations for $\mathrm{GL}_n(\mathbb{F}_q)$; see also Proposition~\ref{prop:role of cuboidal in cohom rep} for its use in Lusztig inductions.

\vspace{2mm} Besides $m$, there is also the ``co-'' version
$$m^*\colon  A(\mathcal{O}_r)\longrightarrow 
A(\mathcal{O}_r)\otimes A(\mathcal{O}_r)
=
\bigoplus_{n\geq0}\bigoplus_{\substack{a,b\geq0;\\ a+b=n }}  \mathbb{Z}\mathrm{Irr}(\mathrm{GL}_a(\mathcal{O}_r)\times \mathrm{GL}_b(\mathcal{O}_r))$$
obtained by linearly extending
$$m^*\colon J_r(s,\nu)\longmapsto \sum_{E_{s_1}, E_{s_2}} J_r(s_1,\nu_1)\otimes J_r(s_2,\nu_2) $$
in which the sum in the RHS runs over all the \emph{coprime} factorisations $E_s(x)=E_{s_1}(x)E_{s_2}(x)$ over $\mathbb{F}_q$ (i.e.\ $E_{s_1}(x)$ and $E_{s_2}(x)$ are coprime polynomials in $\mathbb{F}_q[x]$); here we regard $E_{s_1}(x)E_{s_2}(x)$ and $E_{s_2}(x)E_{s_1}(x)$ as two different factorisations, and we require $m^*(1)=1\otimes 1$. Note that  each such factorisation gives rise, up to conjugation, a unique pair of pairs $((s_1,\nu_1),(s_2,\nu_2))$, and hence $m^*$ is co-associative and co-commutative.

\vspace{2mm} Let $\langle -, - \rangle_{A(\mathcal{O}_r)}$ (resp.\ $\langle -, - \rangle_{A(\mathcal{O}_r)\otimes A(\mathcal{O}_r)}$) be the $\mathbb{Z}$-valued bilinear form on $A(\mathcal{O}_r)$ (resp.\ $A(\mathcal{O}_r)\bigotimes A(\mathcal{O}_r)$) determined by the property that the $\mathbb{Z}$-basis $\bigsqcup_{n\geq0} \mathrm{Irr}(\mathrm{GL}_n(\mathcal{O}_r))$  (resp.\ $\bigsqcup_{a,b\in\mathbb{Z}_{\geq0}} \mathrm{Irr}(\mathrm{GL}_a(\mathcal{O}_r))\times \mathrm{Irr}(\mathrm{GL}_b(\mathcal{O}_r))$) is orthonormal. 

\begin{prop}\label{prop:adjunction and primitivity}
The pair $(m,m^*)$ is adjoint in the sense that
$$\langle m(\rho_1,\rho_2), \rho\rangle_{A(\mathcal{O}_r)}
= 
\langle \rho_1\otimes\rho_2, m^*(\rho) \rangle_{A(\mathcal{O}_r)\otimes A(\mathcal{O}_r)}$$
for all $\rho_1,\rho_2,\rho\in A(\mathcal{O}_r)$.
Moreover, an irreducible representation of $\mathrm{GL}_n(\mathcal{O}_r)$ is cuboidal if and only if $m^*(\rho)=\rho\otimes1+1\otimes\rho$.
\end{prop}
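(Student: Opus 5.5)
By bilinearity it suffices to prove the adjunction for basis elements. Take $\rho_1=J_r(s_1,\nu_1)$, $\rho_2=J_r(s_2,\nu_2)$ and $\rho=J_r(s,\nu)$, using Theorem~\ref{thm:main}(C) to write every irreducible representation this way. The cases involving the unit $1$ follow from $m(1,a)=a$ and $m^*(1)=1\otimes1$ together with the terms $E_s\cdot 1$ and $1\cdot E_s$ of $m^*$; here $1$ is treated as $J_r$ with empty $s$, and the factorisations with a trivial factor are included. In the main case we compare both sides directly.

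The left side is computed first. If $E_{s_1}$ and $E_{s_2}$ are not coprime, then $m(\rho_1,\rho_2)=0$. The right side also vanishes in this case, since every factorisation appearing in $m^*(\rho)$ is coprime, and by Theorem~\ref{thm:main}(B) the pair $(\rho_1,\rho_2)$ determines $(s_1,s_2)$ up to conjugacy. If they are coprime, then $m(\rho_1,\rho_2)=J_r((s_1,s_2),\nu_1\boxtimes\nu_2)$, which is irreducible by Theorem~\ref{thm:main}(A). So the left side equals $1$ if this representation is isomorphic to $\rho$ and $0$ otherwise.

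Next we show the two sides agree. By Theorem~\ref{thm:main}(B), $J_r((s_1,s_2),\nu_1\boxtimes\nu_2)\cong J_r(s,\nu)$ holds exactly when $(s_1,s_2)$ is conjugate to $s$ and, after this identification $L_{(s_1,s_2)}^F\cong L_{s_1}^F\times L_{s_2}^F\cong L_s^F$, $\nu\cong\nu_1\boxtimes\nu_2$. This is equivalent to $E_s=E_{s_1}E_{s_2}$ being an ordered coprime factorisation whose associated pair of pairs is $((s_1,\nu_1),(s_2,\nu_2))$. By the remark after the definition of $m^*$, each factorisation gives a unique such pair of pairs up to conjugation. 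Hence $\rho_1\otimes\rho_2$ occurs in $m^*(\rho)$ with multiplicity exactly $1$ in this situation and $0$ otherwise, which is the adjunction.

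The main obstacle is the bookkeeping. One must check that $\nu$ splits as $\nu_1\boxtimes\nu_2$ compatibly with the chosen characters, that is, $\phi_s=\phi_{s_1}\boxtimes\phi_{s_2}$ once the $\hat s_i$ and $\phi''_{s_i}$ are chosen blockwise. One must also check that distinct ordered factorisations never produce the same tensor $\rho_1\otimes\rho_2$; this follows because the degrees and the characteristic polynomials $E_{s_i}$ are recovered from $\rho_i$ via Lemma~\ref{lemm:s.s. part of orbit}.

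For the primitivity statement, the terms of $m^*(\rho)$ are indexed by the ordered coprime factorisations of $E_s$. The two trivial factorisations $E_s\cdot1$ and $1\cdot E_s$ contribute $\rho\otimes1+1\otimes\rho$. A nontrivial coprime factorisation exists if and only if $E_s$ has at least two distinct irreducible factors, that is, if and only if $\rho$ is not cuboidal (Definition~\ref{defi:cuboidal rep}). Each such factorisation contributes a nonzero basis tensor, and distinct ones cannot cancel since all coefficients are positive. Hence $m^*(\rho)=\rho\otimes1+1\otimes\rho$ exactly when $\rho$ is cuboidal.
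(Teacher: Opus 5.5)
Your proposal is correct and follows the same route as the paper: reduce by bilinearity to irreducible $\rho_1,\rho_2,\rho$, then verify both assertions by direct computation. The paper leaves that computation implicit; you write it out, using Theorem~\ref{thm:main}(A),(B) and the fact that $E_{s_i}$ is determined by $\rho_i$ (Lemma~\ref{lemm:s.s. part of orbit}) to show that each basis tensor occurs in $m^*(\rho)$ with multiplicity $0$ or $1$, matching the left side.
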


\begin{proof}
By linearity one can assume that $\rho_1,\rho_2,\rho$ are irreducible representations. Then both of the assertions follow from direct computations.
\end{proof}

\vspace{2mm} In \cite{Zelevinsky_1981_bk} Zelevinsky studied a delicate Hopf algebra structure on  $\bigoplus_{n\geq0}\mathbb{Z}\mathrm{Irr}(\mathrm{GL}_n(\mathbb{F}_q))$, whose multiplication $\tilde{m}$ and co-multiplication $\tilde{m}^*$ are defined using the Harish-Chandra induction/restriction, in which a basic observation (\cite[9.2]{Zelevinsky_1981_bk}) is that the cuspidal representations of $\mathrm{GL}_n(\mathbb{F}_q)$ are precisely the irreducible primitive elements of $\bigoplus_{n\geq0}\mathbb{Z}[\mathrm{Irr}(\mathrm{GL}_n(\mathbb{F}_q))]$, namely the irreducible elements $\rho$ satisfying that
$$\tilde{m}^*(\rho)=\rho\otimes1+1\otimes\rho,$$
a property similar to the second assertion of Proposition~\ref{prop:adjunction and primitivity} for cuboidal representations. Since then, several generalisations and variations have been made; see \cite{Crisp_Meir_Onn_varHarishChandra}, \cite{Crisp_Meir_Onn_inductive_approach_GLn}, \cite{Chen_2024_Hopfalg_duality}, and the references therein. 

\vspace{2mm} In our setting, if $u\colon \mathbb{Z}=\mathbb{Z}\mathrm{Irr}(\mathrm{GL_0}(\mathcal{O}_r))\hookrightarrow A(\mathcal{O}_r)$ denotes the natural embedding (the unit) and  $u^*\colon A(\mathcal{O}_r)\twoheadrightarrow \mathbb{Z}\mathrm{Irr}(\mathrm{GL_0}(\mathcal{O}_r))$ denotes the natural quotient (the co-unit), then one may expect the algebra/co-algebra $(A(\mathcal{O}_r),m,m^*,u,u^*)$ is a Hopf algebra. Unfortunately, this property is not true: For every $\rho\in\square_n$ one has 
$$m^*(m(\rho,\rho))=0\neq 2\rho\otimes\rho=(1\otimes\rho+\rho\otimes1)\cdot(1\otimes\rho+\rho\otimes1)=m^*(\rho)\cdot m^*(\rho),$$ 
so the Hopf axiom (i.e.\ the property asserting that $m^*$ is a ring morphism) fails in general. Nevertheless, it holds for the coprime pairs:

\begin{prop}\label{prop:Hopf axiom}
Let $\rho_i=J_r(s_i,\nu_i)\in A(\mathcal{O}_r)$ ($i=1,2$) be two irreducible representations such that $E_{s_1}$ and $E_{s_2}$ are coprime. Then
$$m^*(m(\rho_1,\rho_2))=m^*(\rho_1)\cdot m^*(\rho_2),$$ 
that is, the Hopf axiom holds for $(\rho_1, \rho_2)$.
\end{prop}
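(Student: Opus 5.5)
We compute both sides explicitly using the definitions of $m$ and $m^*$. Since $E_{s_1}$ and $E_{s_2}$ are coprime, \eqref{temp:multiplication m} gives $m(\rho_1,\rho_2)=J_r(s,\nu)$, where $s=(s_1,s_2)$ has characteristic polynomial $E_s=E_{s_1}E_{s_2}$, and $\nu=\nu_1\boxtimes\nu_2$ is a nilpotent orbit representation of $L_s^F\cong L_{s_1}^F\times L_{s_2}^F$.

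\textbf{Step 1: factor the irreducible decompositions.} Write $E_{s_1}=\prod_{f\in P_1}f^{e_f}$ and $E_{s_2}=\prod_{f\in P_2}f^{e_f}$, where $P_1$ and $P_2$ are disjoint sets of monic irreducible polynomials. A coprime factorisation $E_s=E_{t}E_{t'}$ is the same as a partition $P_1\sqcup P_2=Q\sqcup Q'$ with $E_t=\prod_{f\in Q}f^{e_f}$. Every such partition restricts uniquely to a pair of partitions $P_1=Q_1\sqcup Q_1'$ and $P_2=Q_2\sqcup Q_2'$, and conversely. So the index set of $m^*(J_r(s,\nu))$ is in bijection with pairs (coprime factorisation of $E_{s_1}$, coprime factorisation of $E_{s_2}$), which is exactly the index set obtained by expanding $m^*(\rho_1)\cdot m^*(\rho_2)$.

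\textbf{Step 2: match the summands.} Fix such a pair of factorisations. Let $(t_i,\mu_i)$ and $(t_i',\mu_i')$ be the parameters attached to the factorisation of $E_{s_i}$. Via the product decompositions \eqref{temp: defn of rational centraliser}, $L_{s_i}^F\cong L_{t_i}^F\times L_{t_i'}^F$, and $\nu_i=\mu_i\boxtimes\mu_i'$, since an irreducible representation of a product is an outer tensor product. Then the summand of $m^*(J_r(s,\nu))$ indexed by $(Q,Q')$ is
$$J_r\bigl((t_1,t_2),\mu_1\boxtimes\mu_2\bigr)\otimes J_r\bigl((t_1',t_2'),\mu_1'\boxtimes\mu_2'\bigr).$$
The product in $A\otimes A$ is componentwise:
$$(J_r(t_1,\mu_1)\otimes J_r(t_1',\mu_1'))\cdot(J_r(t_2,\mu_2)\otimes J_r(t_2',\mu_2'))=m(J_r(t_1,\mu_1),J_r(t_2,\mu_2))\otimes m(J_r(t_1',\mu_1'),J_r(t_2',\mu_2')).$$
Because $E_{t_1}\mid E_{s_1}$ and $E_{t_2}\mid E_{s_2}$ are coprime, this equals the same term by \eqref{temp:multiplication m}. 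All products in the expansion are therefore nonzero and correspond bijectively to the summands.

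\textbf{Main obstacle: well-definedness of the identifications.} The delicate point is that the $1$-dimensional characters agree: $\phi_{(t_1,t_2)}$ must equal $\phi_{t_1}\boxtimes\phi_{t_2}$, and $\phi_{s_i}$ must equal $\phi_{t_i}\boxtimes\phi_{t_i'}$. Both follow from the definition $\phi_s=\prod_i\phi''_{s_i}(\det(-)_i)$, which is a product over the irreducible factors, provided the choices of $\hat s_i$ and $\phi''_{s_i}$ are made once per irreducible polynomial and reused. One must also use that $J_r$ is well defined up to conjugation, so the parameters extracted in $m^*$ are determined only up to conjugacy and the reordering of the blocks. Given Theorem~\ref{thm:main}(B), which makes the parameters unique, this yields the equality term by term.
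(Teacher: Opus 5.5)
Your proposal is correct and takes essentially the same route as the paper. Both rest on the bijection between ordered $2$-partitions of the set of primary factors of $E_{s_1}E_{s_2}$ and pairs of such partitions for $E_{s_1}$ and $E_{s_2}$; the paper records this only as the count $2^{|P_{s_1}\cup P_{s_2}|}=2^{|P_{s_1}|}\cdot 2^{|P_{s_2}|}$, while your term-by-term matching of summands, and your remark that $\phi_{(t_1,t_2)}=\phi_{t_1}\boxtimes\phi_{t_2}$ requires the choices of $\hat s_i$ and $\phi''_{s_i}$ to be made once per irreducible factor, make explicit what the paper leaves implicit.
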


\begin{proof}
Let $E_s(x)=\prod_jp_j(x)$ be the primary decomposition over $\mathbb{F}_q$ (so the $p_j$'s are pairwise coprime and each of them is a power of an irreducible polynomial). Let $P_s$ be the set of all these $p_j$'s. Then by definition the components of $m^*(J_r(s,\nu))$ are in a natural bijection with the ordered $2$-partitions of $P_s$ (we allow the empty subset); in particular, the number of components of $m^*(J_r(s,\nu))$ is $2^{|P_s|}$. Since $E_{s_1}$ and $E_{s_2}$ are coprime, we have
$$2^{|P_{s_1}\cup P_{s_2}|}=2^{|P_{s_1}|}\cdot 2^{|P_{s_2}|},$$
from which the desired equality follows.
\end{proof}

\section{Representations from orbits}\label{sec:rep from orbits}

In this section we assume that $\mathcal{O}=\mathbb{F}_q[[\pi]]$ and $r=2l$ is even. In the below we construct a map
$$
\mathcal{J}_r \colon \mathfrak{gl}_n(\mathbb{F}_q)\longrightarrow \mathrm{Irr}(G_r^{F})
$$ 
satisfying that $\omega\in \Omega(\mathcal{J}_r(\omega))$ for every $\omega\in\mathfrak{gl}_n(\mathbb{F}_q)$. This is a simple application of the Jordan induction.

\begin{lemm}\label{lemm:nilp centraliser nilp}
Let $N$ be a nilpotent matrix in $M_n(\overline{\mathbb{F}}_q)\subseteq M_n(\mathcal{O}^{\mathrm{ur}}_m)$. Then $\mathrm{Tr}(NX)=0$ for any $X\in C_{M_n(\mathcal{O}^{\mathrm{ur}}_m)}(N)$ and $m\in\mathbb{Z}_{>0}$.
\end{lemm}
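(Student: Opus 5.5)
The plan is to reduce the statement to the classical fact over a field: a nilpotent matrix over $\overline{\mathbb{F}}_q$ has trace zero. We use the standing assumption of this section that $\mathcal{O}=\mathbb{F}_q[[\pi]]$. Then $\mathcal{O}^{\mathrm{ur}}_m=\overline{\mathbb{F}}_q[\pi]/\pi^m$, and $\overline{\mathbb{F}}_q$ sits inside it as a subring. So $M_n(\overline{\mathbb{F}}_q)\subseteq M_n(\mathcal{O}^{\mathrm{ur}}_m)$ is an inclusion of $\overline{\mathbb{F}}_q$-algebras, which is what gives meaning to $N$ as an element of $M_n(\mathcal{O}^{\mathrm{ur}}_m)$.

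One cannot simply argue that $NX$ is nilpotent and therefore traceless. Over the non-reduced ring $\mathcal{O}^{\mathrm{ur}}_m$ a nilpotent matrix can have non-zero (nilpotent) trace; for instance the $1\times1$ matrix $(\pi)$. So I would instead expand in powers of $\pi$:
$$X=X_0+\pi X_1+\cdots+\pi^{m-1}X_{m-1},\qquad X_i\in M_n(\overline{\mathbb{F}}_q).$$
This expansion is unique, because $\mathcal{O}^{\mathrm{ur}}_m$ is a free $\overline{\mathbb{F}}_q$-module with basis $1,\pi,\dots,\pi^{m-1}$.

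Since $N$ has entries in $\overline{\mathbb{F}}_q$, the products satisfy $NX=\sum_i\pi^i NX_i$ and $XN=\sum_i \pi^i X_iN$, with $NX_i, X_iN\in M_n(\overline{\mathbb{F}}_q)$. Comparing coefficients of $\pi^i$ in $NX=XN$ gives $NX_i=X_iN$ for every $i$. For each $i$, the matrices $N$ and $X_i$ commute over the field $\overline{\mathbb{F}}_q$, so $(NX_i)^n=N^nX_i^n=0$. Hence $NX_i$ is a nilpotent matrix over a field, and $\mathrm{Tr}(NX_i)=0$. By linearity of the trace, $\mathrm{Tr}(NX)=\sum_i\pi^i\,\mathrm{Tr}(NX_i)=0$.

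The only delicate point is the decomposition step: it uses the equal-characteristic hypothesis both to make sense of $M_n(\overline{\mathbb{F}}_q)$ as a subring of $M_n(\mathcal{O}^{\mathrm{ur}}_m)$ and to allow coefficient comparison. In mixed characteristic the Teichmüller lift is not additive, and the argument, and indeed the meaning of the statement, would need modification. Under the section's assumption, no further obstacle is expected.
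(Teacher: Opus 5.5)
Your proposal is correct and follows essentially the same route as the paper: expand $X=\sum_i \pi^i X_i$, compare coefficients to get $NX_i=X_iN$, conclude that each $NX_i$ is nilpotent over $\overline{\mathbb{F}}_q$ and hence traceless, and sum. Your added justification of the coefficient comparison via the equal-characteristic hypothesis $\mathcal{O}=\mathbb{F}_q[[\pi]]$ matches the section's standing assumption and the paper's later remark that the lemma can fail in characteristic $0$.
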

\begin{proof}
Write $X=\sum_{i=0}^{m-1}\pi^{i}X_i$ with $X_i\in M_n(\overline{\mathbb{F}}_q)$. Then $XN=NX$ implies that $X_iN=NX_i$ for $\forall i$. So $(NX_i)^n=N^nX_i^n=0$ for $\forall i$, which means that every $NX_i\in M_n(\overline{\mathbb{F}}_q)$ is a nilpotent matrix. Thus 
$$\mathrm{Tr}(NX)=\sum_{i=0}^{m-1}\pi^i\mathrm{Tr}(NX_i)=0,$$
as desired.
\end{proof}

From now on, for $N\in M_n(\mathbb{F}_q)$ we write $C(N)$ for its centraliser in $G_r^F$. Let $\psi_N$ be as defined in \eqref{temp:char of Lie alg} (by viewing $N$ as a matrix in $M_n(\mathcal{O}_l)=M_n(\mathbb{F}_q[[\pi]]/\pi^l)$). Since $\mathrm{Tr}(-)$ is invariant under conjugation, we have $\mathrm{Stab}_{G_r^F}(\psi_N)=C(N)(G_r^{l})^F$.

\begin{lemm}\label{lemm:nilp char trivial extension}
Let $N$ be a nilpotent matrix in $M_n({\mathbb{F}}_q)$. Then the assignment
$$\tilde{\psi}_N\colon x\cdot g \longmapsto \psi_N(g)$$
for $x\in C(N)$ and $g\in(G_r^{l})^F$ is a well-defined group morphism from $\mathrm{Stab}_{G_r^F}(\psi_N)$ to $\overline{\mathbb{Q}}_{\ell}^{\times}$, and hence is an extension of $\psi_N$.
\end{lemm}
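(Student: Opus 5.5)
To prove the statement I would check two things: that $\tilde{\psi}_N$ is well defined on $\mathrm{Stab}_{G_r^F}(\psi_N)=C(N)(G_r^{l})^F$, and that it is multiplicative. Once both hold, it is a character of the stabiliser restricting to $\psi_N$ on $(G_r^l)^F$.

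\emph{Well-definedness.} Suppose $xg=x'g'$ with $x,x'\in C(N)$ and $g,g'\in (G_r^l)^F$. Then $x'^{-1}x=g'g^{-1}$ lies in $C(N)\cap (G_r^l)^F$, so it suffices to show that $\psi_N$ is trivial on $C(N)\cap (G_r^l)^F$. Take $h=I+\pi^l X$ in this intersection, so that $X$ commutes with $N$ modulo $\pi^{l}$. Then $a(h)=\rho_{l}(X)$ lies in $C_{M_n(\mathcal{O}_l)}(N)\subseteq C_{M_n(\mathcal{O}^{\mathrm{ur}}_l)}(N)$. By Lemma~\ref{lemm:nilp centraliser nilp} with $m=l$, we get $\mathrm{Tr}(N\,a(h))=0$, hence $\psi_N(h)=\psi(0)=1$. (Here $l=l'$ since $r$ is even, so $a$ lands in $M_n(\mathcal{O}_l)$.)

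\emph{Multiplicativity.} For $x_1,x_2\in C(N)$ and $g_1,g_2\in(G_r^l)^F$ we have
$$x_1g_1x_2g_2=(x_1x_2)\,(x_2^{-1}g_1x_2)\,g_2,$$
with $x_2^{-1}g_1x_2\in (G_r^l)^F$ because the latter is normal in $G_r^F$. Therefore
$$\tilde{\psi}_N(x_1g_1x_2g_2)=\psi_N(x_2^{-1}g_1x_2)\,\psi_N(g_2).$$
It remains to show $\psi_N(x_2^{-1}g_1x_2)=\psi_N(g_1)$. We have $a(x^{-1}gx)=\rho_l(x)^{-1}a(g)\rho_l(x)$, so by cyclicity of the trace
$$\mathrm{Tr}\bigl(N\,a(x^{-1}gx)\bigr)=\mathrm{Tr}\bigl(\rho_l(x)N\rho_l(x)^{-1}a(g)\bigr)=\mathrm{Tr}(N\,a(g)),$$
since $x$ centralises $N$. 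This is just the statement that $C(N)$ stabilises $\psi_N$, already noted before the lemma. Multiplicativity follows, and the restriction to $(G_r^l)^F$ (taking $x=1$) is $\psi_N$.

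The only substantive step is the triviality of $\psi_N$ on $C(N)\cap(G_r^l)^F$, which is exactly what Lemma~\ref{lemm:nilp centraliser nilp} provides. The one point to check there is that centralising $N$ at level $r$ forces $a(h)$ to centralise $N$ in $M_n(\mathcal{O}_l)$. This holds because $(I+\pi^lX)N=N(I+\pi^lX)$ in $M_n(\mathcal{O}_r)$ gives $\pi^l(XN-NX)=0$, hence $XN\equiv NX \pmod{\pi^{l}}$ as $r=2l$. The equal-characteristic assumption is used so that $N\in M_n(\mathbb{F}_q)$ embeds as a constant matrix in $M_n(\mathcal{O}_l)$, as Lemma~\ref{lemm:nilp centraliser nilp} requires.
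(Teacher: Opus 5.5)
Your proposal is correct and follows essentially the same route as the paper. Well-definedness comes from the triviality of $\psi_N$ on $C(N)\cap(G_r^l)^F$, via Lemma~\ref{lemm:nilp centraliser nilp} with $m=l$, and multiplicativity comes from $C(N)$ centralising $N$. You also spell out two details the paper leaves implicit: reducing the commutation relation modulo $\pi^{l}$ using $r=2l$, and the trace computation $\mathrm{Tr}(N\,a(x^{-1}gx))=\mathrm{Tr}(N\,a(g))$.
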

\begin{proof}
If $x\in C(N)\cap G_r^{l}$, then $\psi_N(x)=1$ by Lemma~\ref{lemm:nilp centraliser nilp} (applied to $m=l$). So, as a map $\tilde{\psi}_{N}$ is well-defined; it is a group morphism because $C(N)$ centralises $N$.
\end{proof}

\begin{prop}\label{prop:nilp orbit rep construction}
Let $N$ be a nilpotent matrix in $M_n({\mathbb{F}}_q)$. Then $$\nu_N:=\mathrm{Ind}_{C(N)(G_r^{l})^F}^{G_r^F}\tilde{\psi}_N$$
is an irreducible representation of $G_r^F$ with $N\in \Omega(\nu_N)$.
\end{prop}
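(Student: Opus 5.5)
This is the standard Clifford-theory argument, using Lemma~\ref{lemm:nilp char trivial extension}. Since $r=2l$, we have $l=l'$, and $(G_r^l)^F$ is an abelian normal subgroup of $G_r^F$. Its irreducible characters are the $\psi_\beta$ with $\beta\in M_n(\mathcal{O}_l)$, as in \eqref{temp:char of Lie alg}. Conjugation acts by $^g\psi_\beta=\psi_{^g\beta}$, since the trace is conjugation invariant.

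The paper has already noted that $\mathrm{Stab}_{G_r^F}(\psi_N)=C(N)(G_r^l)^F$; I would verify this directly. The element $g$ stabilises $\psi_N$ iff $^gN=N$ in $M_n(\mathcal{O}_l)$, because the trace pairing on $M_n(\mathcal{O}_l)$ is non-degenerate. Here $N$ is viewed as a constant matrix in $M_n(\mathbb{F}_q[[\pi]]/\pi^l)$, using the equal characteristic hypothesis. This condition says $\rho_l(g)$ centralises $N$ in $\mathrm{GL}_n(\mathcal{O}_l)$, and every such element lifts to $C(N)\subseteq G_r^F$. Indeed, $N$ is a constant matrix, so the centraliser in $M_n(\mathcal{O}_r)$ is $C_{M_n(\mathbb{F}_q)}(N)\otimes\mathcal{O}_r$, which surjects onto the corresponding centraliser mod $\pi^l$ and preserves invertibility. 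Hence the stabiliser is $C(N)(G_r^l)^F$. By Lemma~\ref{lemm:nilp char trivial extension}, $\tilde\psi_N$ is a one-dimensional extension of $\psi_N$ to this inertia group. Clifford's correspondence \cite[(6.11)~Theorem]{Isaacs_CharThy_Book} then gives irreducibility of $\nu_N=\mathrm{Ind}_{C(N)(G_r^l)^F}^{G_r^F}\tilde\psi_N$.

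For $N\in\Omega(\nu_N)$: the restriction of $\nu_N$ to $(G_r^l)^F$ is, by Mackey's formula, a sum of the characters $\psi_{^gN}$, and in particular it contains $\psi_N$. Since $r\geq2$ we have $l\leq r-1$, so $(G_r^{r-1})^F\subseteq(G_r^l)^F$. Restricting further, $\psi_N|_{(G_r^{r-1})^F}=\psi_{\rho_1(N)}=\psi_N$ in the notation of the $\psi_B$ for $B\in M_n(\mathbb{F}_q)$. The lift of $N\in M_n(\mathbb{F}_q)$ to $M_n(\mathcal{O}_l)$ used to define $\psi_N$ is just $N$ itself, so the orbit $\Omega(\nu_N)$ contains $N$.

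The only step needing care is the stabiliser computation, namely lifting centralisers from level $l$ to level $r$. This is where the equal characteristic assumption $\mathcal{O}=\mathbb{F}_q[[\pi]]$ enters, since it lets us view $N$ as a constant matrix at every level. Everything else is formal once Lemma~\ref{lemm:nilp char trivial extension} supplies the extension.
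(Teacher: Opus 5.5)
Your proof is correct and follows the same route as the paper. Irreducibility comes from Lemma~\ref{lemm:nilp char trivial extension} together with Clifford's correspondence \cite[(6.11)~Theorem~(a)]{Isaacs_CharThy_Book}, and the orbit assertion comes from Mackey's formula after restricting to $(G_r^{r-1})^F$. Your explicit check that $\mathrm{Stab}_{G_r^F}(\psi_N)$ is no larger than $C(N)(G_r^l)^F$ is a useful addition: the paper justifies this only by conjugation invariance of the trace, which gives one inclusion, and you supply the reverse inclusion by lifting centralisers of the constant matrix $N$ from level $l$ to level $r$ in equal characteristic.
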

\begin{proof}
The irreducibility follows from Lemma~\ref{lemm:nilp char trivial extension} and Clifford theory \cite[(6.11)~Theorem~(a)]{Isaacs_CharThy_Book}. The orbit assertion is immediate from the Mackey intertwining formula (by taking restriction to $(G_r^{r-1})^F$).
\end{proof}

Now for general $\omega\in M_n(\mathbb{F}_q)$, let $\omega=s+N$ be the additive Jordan decomposition with $s$ the semisimple part and $N$ the nilpotent part. So, by \eqref{temp: defn of rational centraliser} some $\mathrm{GL}_n(\mathbb{F}_q)$-conjugate of $N$ can be written as $\mathrm{diag}(N_1,...,N_d)$, where each $N_i$ is a nilpotent matrix in $M_{n_i}(\mathbb{F}_{q^{\lambda_i}})$. 

\begin{coro}\label{coro:from orbit to irrep}
Let $\omega\in M_n(\mathbb{F}_q)$ and let the notation be as above. Then
$$\mathcal{J}_r(\omega):=J_r(s,\boxtimes_i\nu_{N_i})$$ 
is an irreducible representation of $G_r^F$ with $\omega\in\Omega(\mathcal{J}_r(\omega))$. 
\end{coro}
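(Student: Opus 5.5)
We need two things: that $\boxtimes_i\nu_{N_i}$ is a legitimate nilpotent orbit representation of $L_s^F$, so that Theorem~\ref{thm:main}(A) applies, and that $\omega\in\Omega(\mathcal{J}_r(\omega))$.

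\textbf{The input is a nilpotent orbit representation.} Under the isomorphism \eqref{temp: defn of rational centraliser},
$$L_s^F\cong\prod_{i=1}^d\mathrm{GL}_{n_i}(\mathcal{O}^{(\lambda_i)}/\pi^r).$$
Here $\mathcal{O}^{(\lambda_i)}=\mathbb{F}_{q^{\lambda_i}}[[\pi]]$, because we are in equal characteristic. Apply Proposition~\ref{prop:nilp orbit rep construction} with $\mathbb{F}_q$ replaced by $\mathbb{F}_{q^{\lambda_i}}$ and $n$ by $n_i$. The proof of that proposition only uses Lemma~\ref{lemm:nilp centraliser nilp}, Lemma~\ref{lemm:nilp char trivial extension} and Clifford theory, so it carries over verbatim. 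This shows that each $\nu_{N_i}$ is an irreducible representation of $\mathrm{GL}_{n_i}(\mathcal{O}^{(\lambda_i)}/\pi^r)$ with $N_i\in\Omega(\nu_{N_i})$.

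The orbits for $L_s^F$ are computed with respect to the trace form on each factor, matching the convention used in Lemma~\ref{lemm:s.s. part of orbit_prep}. Hence $\Omega(\boxtimes_i\nu_{N_i})$ contains $\mathrm{diag}(N_1,\dots,N_d)$, viewed inside $\rho_1(\mathcal{L}_s)$ via \eqref{temp:s.s. centraliser level m}. This element is nilpotent, so $\boxtimes_i\nu_{N_i}$ is a nilpotent orbit representation of $L_s^F$.

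\textbf{Irreducibility.} Since $r$ is even, Theorem~\ref{thm:main}(A) gives that $\mathcal{J}_r(\omega)=J_r(s,\boxtimes_i\nu_{N_i})$ is irreducible.

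\textbf{The orbit.} By Lemma~\ref{lemm:s.s. part of orbit},
$$\Omega(\phi_s\otimes\boxtimes_i\nu_{N_i})=\rho_1(A_s)+\Omega(\boxtimes_i\nu_{N_i})\ni\rho_1(A_s)+\mathrm{diag}(N_1,\dots,N_d).$$
This element is conjugate to $s+N=\omega$. The conjugation transporting $s$ to $\rho_1(A_s)$ also sends $N$ into the centraliser of $\rho_1(A_s)$, which is how the blocks $N_i$ were defined in the first place.

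It remains to pass from $(L_s^{r-1})^F$ to $(G_r^{r-1})^F$. The group $(G_r^{r-1})^F$ lies in $(G_r^l)^F\subseteq(L_sG_r^l)^F$. By \eqref{temp:orbit sum form on level l ker}, the restriction of $\widetilde{\phi_s\otimes\nu}$ to $(G_r^l)^F$ is a sum of characters $\psi_{{}^x\beta(s,\nu)}$. Restricting further to level $r-1$ gives characters $\psi_B$ with $B={}^x\rho_{l',1}(\beta(s,\nu))$, where $\beta(s,\nu)=\rho_{l'}(A_s)+y_\nu$ and $y_\nu$ lifts an element of $\Omega(\nu)$. Choosing $y_\nu$ to lift $\mathrm{diag}(N_1,\dots,N_d)$, as the proof of Lemma~\ref{lemm: stab_2 level} allows, the character $\psi_B$ with $B=\rho_1(A_s)+\mathrm{diag}(N_i)$ occurs in $\mathcal{J}_r(\omega)|_{(G_r^{r-1})^F}$. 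Thus $\Omega(\mathcal{J}_r(\omega))$ is the $\mathrm{GL}_n(\mathbb{F}_q)$-orbit of $B$, which contains $\omega$.

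\textbf{Main obstacle.} The delicate point is matching the identifications. The isomorphism \eqref{temp: defn of rational centraliser} and the orbit convention on $L_s^F$ are defined through $\mathrm{Tr}_{\mathbb{F}_{q^{\lambda_i}}/\mathbb{F}_q}$ composed with matrix trace, as in \eqref{temp: s.s. orbit of 1-dim char 2}. One must check that the element $N_i$ read off on the $L_s^F$ side corresponds, under the trace pairing, exactly to the block $\mathrm{diag}(N_1,\dots,N_d)$ of the original $N$, and not to a Frobenius twist or dual. I would handle this by redoing the computation of Lemma~\ref{lemm:s.s. part of orbit_prep} with $\psi_{N_i}$ in place of $\phi''_{s_i}$. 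Any Frobenius twist that does appear is $\mathrm{GL}_n(\mathbb{F}_q)$-conjugate to the original block, so the conclusion survives.
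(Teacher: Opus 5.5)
Your proposal is correct and takes the same route as the paper. The paper's proof is just a citation of Theorem~\ref{thm:main} and Proposition~\ref{prop:nilp orbit rep construction}: irreducibility comes from Theorem~\ref{thm:main}(A) once $\boxtimes_i\nu_{N_i}$ is recognised as a nilpotent orbit representation of $L_s^F$, and the orbit statement comes from Lemma~\ref{lemm:s.s. part of orbit}. You make explicit what the paper leaves implicit, namely applying Proposition~\ref{prop:nilp orbit rep construction} factor by factor over $\mathbb{F}_{q^{\lambda_i}}[[\pi]]$ and passing from $(L_s^{r-1})^F$ to $(G_r^{r-1})^F$ through the inflation; your Frobenius-twist worry is also harmless, since nilpotent classes in $\mathrm{GL}_{n_i}(\mathbb{F}_{q^{\lambda_i}})$ are determined by Jordan type.
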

\begin{proof}
This follows from Theorem~\ref{thm:main} and Proposition~\ref{prop:nilp orbit rep construction}.
\end{proof}

\begin{remark}\label{remark:from orbit to irrep: char=0 and r=2}
For $\mathrm{char}(\mathcal{O})=0$, Lemma~\ref{lemm:nilp centraliser nilp} is not true in general: Take $m=2$ and consider the nilpotent matrix
$N=
\begin{pmatrix}
    1 & 1 \\ 
    1 & 1
\end{pmatrix}
\in M_2(\mathbb{F}_2)$; we also write $N$ for the canonical lift in $M_2(\mathbb{Z}/4)$. Then 
$NI_2=I_2N$ but $\mathrm{Tr}(N\cdot I_2)=2\neq0\in\mathbb{Z}/4$. However, if $r=2$, then Lemma~\ref{lemm:nilp char trivial extension}, Proposition~\ref{prop:nilp orbit rep construction}, and Corollary~\ref{coro:from orbit to irrep} are still true for $\mathrm{char}(\mathcal{O})=0$, because for $r=2$ one only need the $m=1$ case in Lemma~\ref{lemm:nilp centraliser nilp}.
\end{remark}

\section{Relations with cohomology}\label{sec:cohom}

Let $T\subseteq L_s$ denote a conjugate of (the Greenberg functor image of) a maximal torus of ${\mathrm{GL}_n}_{/\mathcal{O}^{\mathrm{ur}_r}}$ such that $FT=T$. Choose a Borel subgroup and a parabolic subgroup of ${\mathrm{GL}_n}_{/\mathcal{O}^{\mathrm{ur}_r}}$, with Greenberg functor images denoted, respectively, by $B$ and $P_s$, satisfying that (i) $T\subseteq B\subseteq P_s$, and (ii) $L_s$ is the Greenberg functor image of a Levi factor of the parabolic subgroup. Let $U,U_s$ be the Greenberg functor images of the corresponding unipotent radicals. Note that if $\alpha$ is a root (with respect to $T$) such that $\alpha(s)\neq1$, then for  $U_{\alpha}$ (the Greenberg functor image of) the root subgroup   one has $U_{\alpha}\cap L_s=\{1\}$ (see e.g.\ \cite[Remark~3.11]{ChenStasinski_2023_algebraisation_II}). 

\vspace{2mm} Consider the subgroup
$$U_s^{\pm}:=\prod_{\alpha}U_{\alpha}^l$$
of $G_r$
where the product is taken over all roots $\alpha$ such that $\alpha(s)\neq1$.  Note that this construction works for any connected reductive group, not only $\mathrm{GL}_n$, and it generalises the notion of arithmetical radical $U^{\pm}$ in \cite{ChenStasinski_2016_algebraisation}.   By definition $U_s^{\pm}$ is normalised by $L_s$ and $F$-stable. In particular, the variety $L^{-1}(U_s^{\pm})$ admits a left $G_r^F$-action and a right $L_s^F$-action, where $L\colon g\mapsto g^{-1}F(g)$ denotes the Lang map. So
\begin{equation*}
    H_c^{i}(L^{-1}(U_s^{\pm}),\overline{\mathbb{Q}}_{\ell})
\end{equation*}
admits a natural structure of $G_r^F$-module-$L_s^F$ (see \cite[Definition~5.1.1]{DM_book_2nd_edition}), where $H_c^{i}(-,\overline{\mathbb{Q}}_{\ell})$ denotes the compactly supported $\ell$-adic cohomology with $\ell$  a prime not equal to $p$. There is a natural generalisation of \cite[Proposition~3.3]{ChenStasinski_2016_algebraisation}:

\begin{prop}
Suppose that $r$ is even. Then for any $\rho\in\mathrm{Irr}(L_s^F)$ one has
$$\mathrm{Ind}_{(L_sG_r^{l})^F}^{\mathrm{GL}_n(\mathcal{O}_r)}
\widetilde{\rho}
\cong 
\sum_{i} (-1)^i H_c^{i}(L^{-1}(U_s^{\pm}),\overline{\mathbb{Q}}_{\ell})\otimes_{\overline{\mathbb{Q}}_{\ell}[L_s^F]} \rho,$$
where  $\widetilde{\rho}$ denotes the trivial inflation along the natural projection $(L_sG_r^{l})^F\rightarrow L_s^F$.
\end{prop}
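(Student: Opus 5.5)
We follow the strategy of \cite[Proposition~3.3]{ChenStasinski_2016_algebraisation}: identify $L^{-1}(U_s^{\pm})$, up to an affine-space fibration, with a finite set, so that its compactly supported cohomology is concentrated in a single even degree and realises the permutation bimodule $\overline{\mathbb{Q}}_{\ell}[G_r^F/(U_s^{\pm})^F]$. We then recognise the tensor product with $\rho$ as the induction from $(L_sG_r^l)^F$.

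The first step is group-theoretic. Since $r=2l$ is even, $G_r^l$ is abelian, and by the Iwahori decomposition
$$G_r^l=(U_s^{-})^l\,L_s^l\,(U_s)^l,\qquad L_sG_r^l=L_s\ltimes U_s^{\pm}.$$
Here $U_s^{\pm}=\prod_{\alpha(s)\neq1}U_\alpha^l$ is a closed connected normal subgroup of $L_sG_r^l$, it is $F$-stable, it is isomorphic as a variety to an affine space, and $L_s\cap U_s^{\pm}=\{1\}$. Taking $F$-points gives $(L_sG_r^l)^F=L_s^F\ltimes (U_s^{\pm})^F$; this uses Lang's theorem on the connected group $U_s^{\pm}$, so that $H^1(F,U_s^{\pm})$ is trivial. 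Consequently $\widetilde{\rho}$ is exactly the inflation of $\rho$ through the quotient by $(U_s^{\pm})^F$.

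The second step is the geometric analysis of $X:=L^{-1}(U_s^{\pm})$. For $g\in X$ choose, by Lang's theorem, some $u\in U_s^{\pm}$ with $L(u)=L(g)$. Then $gu^{-1}\in G_r^F$, so
$$X=G_r^F\cdot U_s^{\pm}\cong G_r^F\times^{(U_s^{\pm})^F}U_s^{\pm}.$$
Thus $X$ is a disjoint union of translates of $U_s^{\pm}$, indexed by $G_r^F/(U_s^{\pm})^F$, and each translate is an affine space of some dimension $d$. Hence $H_c^i(X)=0$ for $i\neq 2d$, and
$$H_c^{2d}(X)\cong \overline{\mathbb{Q}}_{\ell}[G_r^F/(U_s^{\pm})^F](-d).$$
The left $G_r^F$-action is the obvious one. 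The right $L_s^F$-action comes from right multiplication, which is legitimate because $L_s^F$ normalises $U_s^{\pm}$ and fixes $X$. On the top cohomology of each component, $L_s^F$ acts trivially up to the permutation of components. This holds because the connected components are affine spaces, and any automorphism of an affine space acts on $H_c^{2d}$ by the identity after forgetting the Tate twist, i.e.\ via the fundamental class. The sign $(-1)^{2d}=1$ is harmless.

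The third step is pure algebra:
$$\overline{\mathbb{Q}}_{\ell}[G_r^F/(U_s^{\pm})^F]\otimes_{\overline{\mathbb{Q}}_{\ell}[L_s^F]}\rho\cong \overline{\mathbb{Q}}_{\ell}[G_r^F]\otimes_{\overline{\mathbb{Q}}_{\ell}[(L_sG_r^l)^F]}\widetilde{\rho}=\mathrm{Ind}_{(L_sG_r^{l})^F}^{G_r^F}\widetilde{\rho}.$$
This uses the semidirect decomposition from the first step, since $\overline{\mathbb{Q}}_{\ell}[G_r^F/(U_s^{\pm})^F]=\overline{\mathbb{Q}}_{\ell}[G_r^F]\otimes_{(U_s^{\pm})^F}\mathbf{1}$.

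The main obstacle is the first step: verifying that $\prod_{\alpha(s)\neq1}U_\alpha^l$, taken in a fixed order, is genuinely a group normalised by $L_s$, and that together with $L_s$ it fills out all of $L_sG_r^l$. This is where $r$ even, i.e.\ $l=l'$, enters. Commutators $[U_\alpha^l,U_\beta^l]$ land in $U_{\alpha+\beta}^{2l}=1$, and the missing root directions $\alpha(s)=1$ are absorbed by $L_s^l$. The paper already cites these facts (\cite[Remark~3.11]{ChenStasinski_2023_algebraisation_II} and the Iwahori decomposition \cite[Lemma~2.2]{Sta2009Unramified}), and I would invoke them directly. I would also double-check the equality $\dim$ of each $F$-point fibre, and the triviality of the $L_s^F$-action on the fundamental classes.
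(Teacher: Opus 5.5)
Your proof is correct and follows the same route as the paper, which reruns the argument of \cite[Proposition~3.3]{ChenStasinski_2016_algebraisation} with $U^{\pm}$ replaced by $U_s^{\pm}$. That argument has three steps: Lang's theorem gives $L^{-1}(U_s^{\pm})=G_r^F\,U_s^{\pm}$, a disjoint union of affine spaces indexed by $G_r^F/(U_s^{\pm})^F$; the cohomology is therefore the permutation bimodule, concentrated in degree $2\dim U_s^{\pm}$; and the decomposition $(L_sG_r^l)^F=L_s^F\ltimes(U_s^{\pm})^F$ turns the tensor product into the induction.

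Two small remarks. First, the $F$-points of $L_s\ltimes U_s^{\pm}$ already decompose by uniqueness of the factorisation, so Lang is not needed there. Second, $G_r^l$ is abelian for every $r$ since $2l\geq r$, so evenness of $r$ is not what makes $U_s^{\pm}$ a normal subgroup; it only matters later, for irreducibility of $J_r(s,\nu)$.
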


\begin{proof}
    The exact same argument as in \cite[Proposition~3.3]{ChenStasinski_2016_algebraisation} works here,  with $U^{\pm}$ formally replaced by $U_s^{\pm}$.
\end{proof}

In particular,   $J_r(s,\nu)$ admits a geometric realisation:

\begin{coro}\label{coro:geom of Jordan}
If $r$ is even, $J_r(s,\nu)\cong \sum_{i} (-1)^i H_c^{i}(L^{-1}(U_s^{\pm}),\overline{\mathbb{Q}}_{\ell})\otimes_{\overline{\mathbb{Q}}_{\ell}[L_s^F]}{(\phi_s\otimes\nu)}$.
\end{coro}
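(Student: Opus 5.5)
This is a direct specialisation of the preceding proposition. Take $\rho:=\phi_s\otimes\nu$. This is a legitimate choice: $\phi_s$ is a $1$-dimensional representation of $L_s^F$ and $\nu$ is an irreducible (nilpotent orbit) representation of $L_s^F$, so $\phi_s\otimes\nu\in\mathrm{Irr}(L_s^F)$.

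The next step is to compare $\widetilde{\rho}$ with $\widetilde{\phi_s\otimes\nu}$. In the proposition, $\widetilde{\rho}$ is the trivial inflation along the projection $(L_sG_r^l)^F\to L_s^F$. In Definition~\ref{defi:Jordan ind}, $\widetilde{\phi_s\otimes\nu}$ is the inflation along the projection coming from the Iwahori decomposition. The point to check is that these are the same projection: the map $(L_sG_r^l)^F\to L_s^F$ kills the normal subgroup $(U_s^{\pm})^F$ and is the identity on $L_s^F$. I expect this to follow from the Iwahori factorisation $(L_sG_r^l)^F=(U_s^{\pm})^F L_s^F$, with $(U_s^{\pm})^F$ normal, exactly as in \cite[Lemma~2.2]{Sta2009Unramified}.

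Granting that identification, the definition gives $J_r(s,\nu)=\mathrm{Ind}_{(L_sG_r^l)^F}^{\mathrm{GL}_n(\mathcal{O}_r)}\widetilde{\phi_s\otimes\nu}$. Substituting $\rho=\phi_s\otimes\nu$ into the preceding proposition, which applies since $r$ is even, then yields the stated alternating sum of $H_c^i(L^{-1}(U_s^{\pm}),\overline{\mathbb{Q}}_\ell)\otimes_{\overline{\mathbb{Q}}_\ell[L_s^F]}(\phi_s\otimes\nu)$.

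No genuine obstacle is expected. The only point requiring care is the identification of the two inflations, that is, confirming that the Iwahori decomposition used in Definition~\ref{defi:Jordan ind} has $(U_s^{\pm})^F$ as the kernel of the projection onto $L_s^F$, which I expect to be immediate.
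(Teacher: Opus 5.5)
Your proposal is correct and is the paper's argument: the corollary is the preceding proposition specialised to $\rho=\phi_s\otimes\nu$, which is irreducible because $\phi_s$ is $1$-dimensional. Your check that the two inflations agree is also sound (both are along the same projection $(L_sG_r^l)^F\to L_s^F$, with kernel $(U_s^{\pm})^F$ because $G_r^l$ is abelian for even $r$), while the paper simply treats this identification as holding by definition.
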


\vspace{2mm} Similar to $H_c^{i}(L^{-1}(U_s^{\pm}),\overline{\mathbb{Q}}_{\ell})$, one can consider the $G_r^F$-module-$L_s^F$
$$H_c^{i}(L^{-1}(F(U_s)),\overline{\mathbb{Q}}_{\ell});$$
it gives rise to the higher analogue of the  Lusztig induction functor  \cite{Lusztig_1976_finiteness_unipotent_classes}
$$R_{L_s,U_s}^{\mathrm{GL}_n}\colon \rho\longmapsto \sum_i (-1)^i H_c^{i}(L^{-1}(F(U_s)),\overline{\mathbb{Q}}_{\ell})\otimes_{\overline{\mathbb{Q}}_{\ell}[L_s^F]}\rho$$
which takes virtual $L_s^F$-modules to virtual $G_r^F$-modules. This construction was introduced by Charlotte Chan  \cite{Chan_2024_Scalar_Product}  in the general parahoric group setting, via a character-theoretic property instead of the above bimodule tensor product (the two constructions coincide by \cite[Proposition~5.1.5]{DM_book_2nd_edition}).

\vspace{2mm} The following conjecture is an exact generalisation (in the case of $\mathrm{GL}_n$) of the algebraisation theorem in \cite{ChenStasinski_2016_algebraisation,   ChenStasinski_2023_algebraisation_II}:

\begin{conj}\label{conj:Jordan=Lusztig}
Suppose that $r$ is even. Then for every $(s,\nu)$ one has
$$J_r(s,\nu)
\cong R_{L_s,U_s}^{\mathrm{GL}_n}(\phi_s\otimes\nu);$$
in particular, $R_{L_s,U_s}^{\mathrm{GL}_n}(\phi_s\otimes\nu)$ is irreducible.
\end{conj}

Note that, together with Theorem~\ref{thm:main}, this conjecture gives a perfect analogue of the relation between Lusztig induction and Lusztig's Jordan decomposition; see \cite[Theorem~11.4.3(ii)]{DM_book_2nd_edition}.

\vspace{2mm} When $\Omega(\nu)=\{ 0 \}$, the representation $J_r(s,\nu)$ is a semisimple orbit representation. In this case a similar conjecture (using Yu's representations \cite{Yu_2001_JAMS} and Fintzen--Kaletha--Spice's twists \cite{Fintzen_Kaletha_Spice_twistedYu_DMJ_2023}) has been proposed by Chan--Oi in \cite[Conjecture~5.12]{Chan_Oi_2025_GreenFunc} for parahoric groups, under some assumption on $p$. A smaller family of them, called  strongly semisimple representations, has been constructed previously by Hill \cite{Hill_1995_semisimple}; see also the discussion in \cite[Subsection~8.2]{Aubert_Onn_Prasad_Stasinski_Israelpaper_2010}. Recently, Monteiro \cite{monteiro2023stable} introduced the notion of stable matrices and used it to construct another class of irreducible representations of $\mathrm{GL}_n(\mathcal{O}_r)$; Monteiro's construction covers all the strongly semisimple orbit representations.

\begin{prop}\label{prop:simplest cases of J=L conj}
Conjecture~\ref{conj:Jordan=Lusztig} is true in the following situations:
\begin{enumerate}
\item[(i)] $s$ is regular (i.e., the characteristic polynomial of $s\in M_n(\mathbb{F}_q)$ is separable over $\overline{\mathbb{F}}_q$);
\item[(ii)] $s$ splits (i.e., the characteristic polynomial of $s\in M_n(\mathbb{F}_q)$ splits over ${\mathbb{F}}_q$);
\item[(iii)] $n=2$ or $3$.
\end{enumerate}
\end{prop}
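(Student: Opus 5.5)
The plan is to treat (i) and (ii) separately and then deduce (iii) from them by a case count. Case (i) should reduce to the algebraisation theorem of \cite{ChenStasinski_2016_algebraisation}. Case (ii) should reduce to the fact that Lusztig induction from a rational Levi is parabolic induction, followed by a dimension count.

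For (i), suppose $s$ is regular. Then all $n_i=1$, so $L_s=T$ is a maximal torus, $F$-stable, with $T^F\cong\prod_i(\mathcal{O}^{(\lambda_i)}/\pi^r)^\times$. The centraliser of $s$ in the Lie algebra is toral, so the only nilpotent element of $\rho_1(\mathcal{L}_s)$ is $0$. Hence $\nu$ is a character of $T^F$ trivial on $(T^{r-1})^F$, and by Lemma~\ref{lemm:s.s. part of orbit_prep} the character $\theta:=\phi_s\otimes\nu$ satisfies $\Omega(\theta)=\{\rho_1(A_s)\}$. This is a regular semisimple element, so $\theta$ is regular in the sense of \cite{ChenStasinski_2016_algebraisation}. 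In this situation $U_s^{\pm}$ is exactly the arithmetic radical $U^{\pm}$ used there. Their algebraisation theorem (for even $r$) then gives $R_{T,U}^{\mathrm{GL}_n}(\theta)\cong\mathrm{Ind}_{(TG_r^l)^F}^{G_r^F}\tilde\theta=J_r(s,\nu)$. I will need to check that their notion of regularity and their normalisation of $\psi$ agree with ours; I expect this to be a routine check.

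For (ii), suppose $s$ splits. Then every $\lambda_i=1$, so $L_s$ is an $F$-stable Levi subgroup of an $F$-stable parabolic subgroup $P_s=L_sU_s$, and $F(U_s)=U_s$. Thus $L^{-1}(U_s)=G_r^FU_s$, and the standard computation \cite[Proposition~5.1.5]{DM_book_2nd_edition} identifies $R_{L_s,U_s}^{\mathrm{GL}_n}(\rho)$ with $\mathrm{Ind}_{P_s^F}^{G_r^F}\mathrm{Infl}(\rho)$. I then compare this with $J_r(s,\nu)$ in three steps.
\begin{enumerate}
\item[(a)] \emph{Dimensions agree.} We have $[G_r^F:P_s^F]=|G_r^F|/(|L_s^F||U_s^F|)$ and $[G_r^F:(L_sG_r^l)^F]=|G_r^F|/(|L_s^F||(U_s^l)^F||(U_s^{-,l})^F|)$, using the Iwahori decomposition. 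Since $r=2l$, each root group satisfies $|U_\alpha^F|=q^{r}=|(U_\alpha^l)^F|\cdot|(U_{-\alpha}^l)^F|$ after pairing $\alpha$ with $-\alpha$. Hence the two indices coincide.
\item[(b)] \emph{A common piece on the intersection.} The group $P_s^F\cap(L_sG_r^l)^F=L_s^F(U_s^l)^F$ is where both inducing data live. On it, $\mathrm{Infl}(\phi_s\otimes\nu)$ and $\widetilde{\phi_s\otimes\nu}$ coincide: both restrict to $\phi_s\otimes\nu$ on $L_s^F$ and are trivial on $(U_s^l)^F$. The second is trivial there by construction of the inflation along the Iwahori decomposition.
\item[(c)] \emph{Conclusion.} By Mackey's formula applied to the identity double coset, step (b) gives a nonzero intertwiner between $\mathrm{Ind}_{P_s^F}^{G_r^F}\mathrm{Infl}(\phi_s\otimes\nu)$ and $J_r(s,\nu)$. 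Since $J_r(s,\nu)$ is irreducible by Theorem~\ref{thm:main}(A), and the two representations have equal dimensions by (a), the intertwiner is an isomorphism.
\end{enumerate}

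For (iii), I run through the possible characteristic polynomials $E_s$ for $n\le 3$. If $E_s$ has no repeated irreducible factor, then $s$ is regular and (i) applies. Otherwise some irreducible factor $f$ occurs with multiplicity at least $2$, so $2\deg f\le 3$ forces $\deg f=1$. In that case the remaining factor has degree at most $1$ and $E_s$ splits, so (ii) applies.

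I expect the main obstacle to be (i). The hypotheses and conventions of the cited algebraisation theorem (regularity of $\theta$, and no restriction on $p$ for even $r$) must be matched exactly with the present definitions of $\phi_s$ and $U_s^{\pm}$.
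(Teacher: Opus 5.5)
Your proposal is correct and follows essentially the same route as the paper. Part (i) is reduced to the Chen--Stasinski algebraisation theorem; the paper cites \cite[Theorem~4.4]{ChenStasinski_2023_algebraisation_II} together with Proposition~2.2 and Remark~4.6 there, which is where the matching of regularity conventions you flag is handled. Part (ii) identifies $R_{L_s,U_s}^{\mathrm{GL}_n}$ with induction from an $F$-stable parabolic, then uses equality of dimensions, the nonzero identity-double-coset term in Mackey's formula, and irreducibility of $J_r(s,\nu)$, and part (iii) is the same case analysis on $E_s$; your dimension count and intersection computation simply spell out what the paper calls ``by construction''.
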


Note that if $n$ is a prime, then (i) and (ii) cover all the cuboidal representations of $\mathrm{GL}_n(\mathcal{O}_r)$;  see Example~\ref{expl:prime n} and Proposition~\ref{prop:role of cuboidal in cohom rep}.

\begin{proof} (i) This is a special case of \cite[Theorem~4.4]{ChenStasinski_2023_algebraisation_II} (combined with \cite[Proposition~2.2 and Remark~4.6]{ChenStasinski_2023_algebraisation_II}); see also \cite{ChenStasinski_2016_algebraisation}.

\vspace{2mm} (ii) In this case, up to conjugation we can assume that $P_s$ (and hence $U_s$) is $F$-stable.
Then the same argument as in \cite[Page~139]{DM_book_2nd_edition} gives that
    $$R_{L_s,U_s}^{\mathrm{GL}_n}(\phi_s\otimes\nu)
    \cong{\mathrm{Ind}}_{P_s^F}^{G_r^F}\widetilde{\phi_s\otimes\nu}.$$
    By construction, both $J_r(s,\nu)$ and ${\mathrm{Ind}}_{P_s^F}^{G_r^F}\widetilde{\phi_s\otimes\nu}$ have the same dimension, so, as $J_r(s,\nu)$ is irreducible (Theorem~\ref{thm:main}~(A)), it suffices to show that their inner product is non-zero.

\vspace{2mm} By Frobenius reciprocity and Mackey's intertwining formula we see that
\begin{equation*}
    \begin{split}
   \langle J_r(s,\nu),\ & {\mathrm{Ind}}_{P_s^F}^{G_r^F} \widetilde{\phi_s\otimes\nu}  \rangle_{G_r^F} \\
    & = \langle \mathrm{Res}_{P_s^F}^{G_r^F} J_r(s,\nu), \widetilde{\phi_s\otimes\nu}\rangle_{P_s^F}\\
    & = \sum_{x\in (L_sG_r^l)^F\backslash G_r^F/P_s^F} 
    \langle \mathrm{Ind}^{P_s^F}_{{{}^x(L_sG_r^l)}^F\cap P_s^F} \mathrm{Res}^{{{}^x(L_sG_r^l)}^F}_{{{}^x(L_sG_r^l)}^F\cap P_s^F} {^x(\widetilde{\phi_s\otimes\nu})}, \widetilde{\phi_s\otimes\nu}    \rangle_{P_s^F}\\
    & = \sum_{x\in (L_sG_r^l)^F\backslash G_r^F/P_s^F} 
    \langle  \mathrm{Res}^{{{}^x(L_sG_r^l)}^F}_{{{}^x(L_sG_r^l)}^F\cap P_s^F} {^x(\widetilde{\phi_s\otimes\nu})},  \mathrm{Res}^{P_s^F}_{{{}^x(L_sG_r^l)}^F\cap P_s^F} \widetilde{\phi_s\otimes\nu}    \rangle_{{{}^x(L_sG_r^l)}^F\cap P_s^F};
    \end{split}
\end{equation*}
clearly the summand corresponding to $x=1$ is non-zero, and so the whole inner product is non-zero, as desired.

\vspace{2mm} (iii) This case is covered by (i) and (ii).
\end{proof}

\begin{prop}\label{prop:role of cuboidal in cohom rep}
If Conjecture~\ref{conj:Jordan=Lusztig} holds true for all the cuboidal $J_r(s,\nu)\in\mathrm{Irr}(\mathrm{GL}_m(\mathcal{O}_r))$ for all $m>0$, then it holds true in full.
\end{prop}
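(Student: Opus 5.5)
The plan is to reduce a general pair $(s,\nu)$ to its primary pieces using the block decomposition of $L_s$ and the transitivity of Lusztig induction. Take the primary decomposition $E_s(x)=\prod_{j=1}^k p_j(x)$ over $\mathbb{F}_q$, with pairwise coprime $p_j$ each a power of an irreducible polynomial. Conjugating within $\mathrm{GL}_n(\mathcal{O}_r)$, we may assume $s=\mathrm{diag}(s_1,\dots,s_k)$ with $s_j\in M_{m_j}(\mathbb{F}_q)$ and $E_{s_j}=p_j$. Let $M:=\prod_j \mathrm{GL}_{m_j}$ be the corresponding Levi subgroup, with Greenberg image $M_r$; since the $p_j$ are coprime, $M_r$ is $F$-stable and equals $L_{(s_1,\dots,s_k)}$ in the notation of \eqref{temp:multiplication m}. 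We have $L_s=\prod_j L_{s_j}\subseteq M_r$, $\phi_s=\boxtimes_j\phi_{s_j}$ (by construction, after compatible choices of lifts and constituents), and $\nu=\boxtimes_j\nu_j$ with $\nu_j$ a nilpotent orbit representation of $L_{s_j}^F$. Each $J_r(s_j,\nu_j)$ is cuboidal, so by hypothesis
$$J_r(s_j,\nu_j)\cong R^{\mathrm{GL}_{m_j}}_{L_{s_j},U_{s_j}}(\phi_{s_j}\otimes\nu_j).$$

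The first step is to choose the parabolic $P_s$ inside a parabolic $Q\supseteq P_s$ with Levi $M$ and unipotent radical $V$, so that $U_s=V\cdot(U_s\cap M)$. Transitivity of Lusztig induction (the cohomological argument of \cite[Proposition~5.1.7]{DM_book_2nd_edition}, which transfers to the Greenberg setting as in Chan's work) then gives
$$R^{\mathrm{GL}_n}_{L_s,U_s}=R^{\mathrm{GL}_n}_{M,V}\circ R^{M}_{L_s,U_s\cap M}.$$
Since $M$ is a product, $R^{M}_{L_s,U_s\cap M}(\phi_s\otimes\nu)=\boxtimes_j R^{\mathrm{GL}_{m_j}}_{L_{s_j},U_{s_j}}(\phi_{s_j}\otimes\nu_j)$, and by the hypothesis this equals $\boxtimes_j J_r(s_j,\nu_j)$.

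It therefore remains to show
$$R^{\mathrm{GL}_n}_{M,V}\Bigl(\boxtimes_j J_r(s_j,\nu_j)\Bigr)\cong J_r(s,\nu)=m\bigl(J_r(s_1,\nu_1),\dots\bigr).$$
This is the main obstacle, because $Q$ need not be $F$-stable, so the argument of Proposition~\ref{prop:simplest cases of J=L conj}(ii) does not apply directly. My approach is to first treat $R^{\mathrm{GL}_n}_{M,V}$ applied to a representation of the form $\mathrm{Ind}_{(L_sM^l)^F}^{M^F}\widetilde{\rho}$. Using Corollary~\ref{coro:geom of Jordan} inside $M$, write $\boxtimes_j J_r(s_j,\nu_j)$ as a virtual bimodule tensor $H_c^*(L_M^{-1}(U^{\pm}_{s,M}))\otimes_{L_s^F}(\phi_s\otimes\nu)$. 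Then compose varieties: $L^{-1}(F V)\times^{M^F}L_M^{-1}(U^{\pm}_{s,M})$ should be identified, via the standard fibration and Künneth argument of \cite[Proposition~5.1.7]{DM_book_2nd_edition}, with $L^{-1}(FV\cdot U^{\pm}_{s,M})$. The key step is then to deform $FV\cdot U^{\pm}_{s,M}$ to $U_s^{\pm}$ without changing the alternating sum of cohomology. Because every root $\alpha$ occurring in $V$ satisfies $\alpha(s)\neq1$, I would run the argument of \cite[Proposition~3.3]{ChenStasinski_2016_algebraisation} and \cite[Theorem~4.4]{ChenStasinski_2023_algebraisation_II}. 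That argument shows that the top part $V/V^l$ contributes only after twisting by a character whose orbit is that of $\phi_s\otimes\nu$. It then lets us replace $FV$ by $V^l\cdot (F V^-)^l$, i.e. by the root subgroups $U_\alpha^l$ for $\alpha(s)\ne1$ outside $M$.

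If this identification is too delicate, there is a character-theoretic fallback. Both sides have the same dimension up to sign (by a degree formula for $R_{M,V}$ and the dimension computation behind Theorem~\ref{thm:main}(D)), and $J_r(s,\nu)$ is irreducible. It would then suffice to show that $\langle R^{\mathrm{GL}_n}_{M,V}(\boxtimes_j J_r(s_j,\nu_j)),J_r(s,\nu)\rangle=\pm1$. I would compute this pairing by restricting to $(G_r^{r-1})^F$ and using Lemma~\ref{lemm:s.s. part of orbit}: the orbits of both sides have semisimple part $s$, and the coprimality of the $p_j$ forces all intertwining to factor through $M$, as in the proof of Lemma~\ref{lemm: stab_2 level}.
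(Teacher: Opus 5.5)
Your reduction up to
\[
R^{\mathrm{GL}_n}_{L_s,U_s}(\phi_s\otimes\nu)=R^{\mathrm{GL}_n}_{M,V}\Bigl(\boxtimes_j J_r(s_j,\nu_j)\Bigr)
\]
is sound and agrees with the paper in substance. The paper splits off two coprime blocks and inducts on $n$; you split into all primary components at once, which makes the induction unnecessary. The gap is in the last step, and the obstacle you name there is not real.

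After conjugating so that $s=\mathrm{diag}(s_1,\dots,s_k)$ with $s_j\in M_{m_j}(\mathbb{F}_q)$, the Levi $M=\prod_j\mathrm{GL}_{m_j}$ is block diagonal. You may therefore take $Q$ to be the standard block upper-triangular parabolic, which is $F$-stable. Then set $P_s:=P^M_s\cdot V$ for any parabolic $P^M_s$ of $M$ with Levi $L_s$; only $P^M_s$, not $Q$, can fail to be $F$-stable.

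With this choice, $R^{\mathrm{GL}_n}_{M,V}$ is Harish-Chandra induction, $\mathrm{Ind}_{Q^F}^{G_r^F}$ composed with inflation, by the argument of \cite[Page~139]{DM_book_2nd_edition}. So the right-hand side is the honest representation $\mathrm{Ind}_{Q^F}^{G_r^F}\widetilde{\boxtimes_j J_r(s_j,\nu_j)}$.

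Its dimension equals $\dim J_r(s,\nu)$, since $|V^F|\cdot|(M^l)^F|=|(G_r^l)^F|$ when $r$ is even. The $x=1$ Mackey term of the pairing with $J_r(s,\nu)$ is non-zero, for three reasons:
\begin{enumerate}
\item $(L_sG_r^l)^F\cap Q^F=(L_sM^lV^l)^F$;
\item $V^l\subseteq U_s^{\pm}$ lies in the kernel of $\widetilde{\phi_s\otimes\nu}$;
\item $\boxtimes_j J_r(s_j,\nu_j)$ contains $\widetilde{\phi_s\otimes\nu}|_{(L_sM^l)^F}$ by Frobenius reciprocity.
\end{enumerate}
Irreducibility of $J_r(s,\nu)$ then finishes the proof. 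This is the paper's argument.

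Neither of your workarounds closes the step as written. In the geometric route, the identification of $L^{-1}(FV)\times^{M^F}L_M^{-1}(U^{\pm}_{s,M})$ with $L^{-1}(FV\cdot U^{\pm}_{s,M})$ and the subsequent deformation of $FV\cdot U^{\pm}_{s,M}$ to $U^{\pm}_s$ carry the whole content. You give no mechanism for either.

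The character-theoretic fallback is logically insufficient. For a virtual character $X$ and an irreducible $J$, having $\dim X=\pm\dim J$ and $\langle X,J\rangle=\pm1$ does not force $X=\pm J$; take $X=J+A-B$ with $\dim A=\dim B$. You would also need $\langle X,X\rangle=1$, which you do not address. In addition, no degree formula for $R_{M,V}$ with non-$F$-stable $V$ is available here. Restricting to $(G_r^{r-1})^F$ also does not compute an inner product over $G_r^F$. The dimension-plus-Mackey argument works only because $F$-stability of $Q$ makes the induced object an actual representation.

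A minor slip: in \eqref{temp:multiplication m}, $L_{(s_1,\dots,s_k)}$ is $\prod_jL_{s_j}=L_s$, not $M_r$.
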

\begin{proof}
We prove this by an induction on $n$ in $\mathrm{GL}_n$. The initial step is covered by Proposition~\ref{prop:simplest cases of J=L conj}(iii). Suppose now that the conjecture is true for any $k<n$. If $J_r(s,\nu)\in\mathrm{Irr}(\mathrm{GL}_n(\mathcal{O}_r))$ is not cuboidal, then in the same way as \eqref{temp:multiplication m} we can rewrite $\phi_s\otimes\nu$ to be $(\phi_{s_1}\otimes\nu_1)\boxtimes(\phi_{s_2}\otimes\nu_2)$, where $n_1+n_2=n$ and each $\phi_{s_i}\otimes\nu_i$ is an irreducible representation of $L_{s_i}^F\subseteq \mathrm{GL}_{n_i}(\mathcal{O}_r)$. Let $P_{n_1,n_2}\subseteq \mathrm{GL}_n(\mathcal{O}^{\mathrm{ur}}_r)$ be the corresponding standard blocked upper triangular subgroup, and denote its blocked upper uni-triangular part by $U_{n_1,n_2}$. Then by the transitive property of Lusztig inductions (see \cite[Proposition~3.3]{Chan_2024_Scalar_Product}) we have
\begin{equation*}
\begin{split}
    R_{L_s,U_s}^{\mathrm{GL}_n}(\phi_s\otimes\nu)
    &=R_{\mathrm{GL}_{n_1}\times \mathrm{GL}_{n_2},U_{n_1,n_2}}^{\mathrm{GL}_n}
    R^{\mathrm{GL}_{n_1}\times \mathrm{GL}_{n_2}}_{L_s,U_s\cap(\mathrm{GL}_{n_1}\times \mathrm{GL}_{n_2})}
    (\phi_{s_1}\otimes\nu_1)\boxtimes(\phi_{s_2}\otimes\nu_2)\\
    &=\mathrm{Ind}_{P_{n_1,n_2}(\mathcal{O}_r)}^{\mathrm{GL}_n(\mathcal{O}_r)} 
    \widetilde{ J_r(s_1,\nu_1)\boxtimes J_r(s_2,\nu_2) },
\end{split}
\end{equation*}
in which the second equality follows from the induction principle. Now the conclusion follows from the same use of a dimension comparison and of the Mackey formula as in the argument of Proposition~\ref{prop:simplest cases of J=L conj}(ii).
\end{proof}

\section{On odd levels}\label{sec:odd levels}

In this section we propose a conjectural construction of  Jordan induction $J_r(s,\nu)$ in the odd level case.

\begin{lemm}
Suppose that $r=2l-1$ is odd. Let  $\nu$ be a nilpotent orbit irreducible representation of $L_s^F$. Then all the irreducible constituents of 
$$\mathrm{Ind}_{\left(L_s^1G_{r}^{l}\right)^F}^{\left(L_s^1G_{r}^{l'}\right)^F}\widetilde{(\phi_s\otimes\nu)}|_{\left(L_s^1G_{r}^l\right)^F}$$
have the same dimension and the same multiplicity. 
\end{lemm}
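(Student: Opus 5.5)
The plan is to use Clifford theory for the pair $H:=\left(L_s^1G_{r}^{l}\right)^F\subseteq K:=\left(L_s^1G_{r}^{l'}\right)^F$. The symmetry needed to compare different constituents will come from the larger group $(L_sG_r^{l})^F$, on which $\widetilde{\phi_s\otimes\nu}$ is irreducible. Write $\theta:=\widetilde{(\phi_s\otimes\nu)}|_{H}$.

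\emph{Step 1 (structure).} Here $r=2l-1$, so $l'=l-1$ and $2l'\geq l$; I will assume $r>1$, so that $l'\geq 1$. Then $[G_r^{l'},G_r^{l'}]\subseteq G_r^{2l'}\subseteq G_r^{l}$ and $[G_r^{l'},L_s^1]\subseteq G_r^{l'+1}=G_r^{l}$. Hence $H$ is normal in $K$, and $K/H$ is a quotient of $G_r^{l'}/G_r^{l}\cong M_n(\overline{\mathbb{F}}_q)$ (taking $F$-points), so $K/H$ is an elementary abelian $p$-group. Moreover $L_s^F$ normalises both $H$ and $K$, because $L_s$ normalises $L_s^1$, $G_r^{l}$ and $G_r^{l'}$.

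\emph{Step 2 (one constituent at a time).} Let $\eta$ be an irreducible constituent of $\theta$, and let $S:=\mathrm{Stab}_K(\eta)$. Then $S/H$ is abelian and $\eta$ is $S$-invariant. I will invoke the standard fact for an invariant character over a normal subgroup with abelian quotient (Gallagher's theorem together with \cite[(6.11)]{Isaacs_CharThy_Book}). By this fact, the irreducible constituents $\chi$ of $\mathrm{Ind}_H^S\eta$ form one orbit under tensoring with $\mathrm{Irr}(S/H)$, because all the $\chi\otimes\lambda$ have the same restriction $e\,\eta$ to $H$. Consequently all such $\chi$ satisfy $\chi|_H=e\,\eta$ with a common ramification index $e$. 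Frobenius reciprocity then shows they all occur with multiplicity $e$ and have dimension $e\dim\eta$. Inducing from $S$ to $K$ is a bijection on these constituents and preserves the uniformity, by Clifford correspondence \cite[(6.11)(a)]{Isaacs_CharThy_Book}.

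\emph{Step 3 (all constituents together).} This is the main obstacle: a constituent of $\mathrm{Ind}_H^K\theta$ can come from several $\eta$'s, and a priori different $\eta$'s could behave differently. To handle this, I will use that $\widetilde{\phi_s\otimes\nu}$ is irreducible on $(L_sG_r^{l})^F$, which contains $H$ as a normal subgroup. By Clifford theory, the constituents $\eta$ of $\theta$ form a single $(L_sG_r^{l})^F$-orbit, and they all occur with the same multiplicity. Since $G_r^{l}\subseteq H$ acts trivially on $\mathrm{Irr}(H)$, this orbit is in fact a single $L_s^F$-orbit. Conjugation by $x\in L_s^F$ is an automorphism of the pair $(K,H)$, and it carries $\mathrm{Ind}_H^K\eta$ to $\mathrm{Ind}_H^K({}^x\eta)$. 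Therefore the group $L_s^F$ acts transitively on the set of irreducible constituents of $\mathrm{Ind}_H^K\theta$, and this action preserves both dimensions and multiplicities in $\mathrm{Ind}_H^K\theta$, because $\theta$ itself is $L_s^F$-stable. Transitivity requires that every constituent arise from some $\eta$, and this holds since $\theta$ is a sum of $\eta$'s. It follows that all constituents have the same dimension and the same multiplicity. In this step I expect only to need to check carefully that ${}^x\theta\cong\theta$ for $x\in L_s^F$, which holds because $\theta$ is the restriction of a representation of a group containing $x$. Step 2 is then really only needed to describe the constituents explicitly, not for the uniformity itself.
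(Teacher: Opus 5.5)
Your Steps~1 and~2 are correct and are essentially the paper's proof. There one takes the stabiliser $S$ and uses the Clifford correspondence. One then writes every constituent as $\mathrm{Ind}_S^K(\lambda\rho_S)$ with $\lambda$ a character of the abelian group $S/H$, and reads off multiplicities by Frobenius reciprocity. The middle step is \cite[Corollary~1.24(b)]{Navarro_2018_book_McKayConj}, which is the right reference for your ``standard fact''. Gallagher's theorem needs $\eta$ to extend, and ``all $\lambda\chi$ restrict to $e\eta$'' only shows that the $\lambda\chi$ \emph{are} constituents, not that every constituent is one.

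The paper applies this argument directly to $\theta$, tacitly as if $\theta$ were irreducible. You apply it to an irreducible constituent $\eta$, which is what Clifford theory requires, so you rightly need Step~3 to pass from $\eta$ to $\theta$. That is where the gap is. Transitivity of $L_s^F$ on the $\eta$'s only gives the following: if $\chi_1$ lies over $\eta_1$, $\chi_2$ lies over $\eta_2$, and ${}^x\eta_1=\eta_2$, then ${}^x\chi_1$ and $\chi_2$ are both constituents of $\mathrm{Ind}_H^K\eta_2$. It does not make them equal. By your own Step~2, $\mathrm{Ind}_H^K\eta_2$ may a priori have several distinct constituents $\mathrm{Ind}_S^K(\lambda\chi)$. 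Nothing in your argument shows that the elements of $L_s^F$ preserving the $K$-orbit of $\eta_2$ permute these transitively. So the claimed transitivity on the constituents of $\mathrm{Ind}_H^K\theta$ is unproved, and your closing remark that Step~2 is not needed for the uniformity is wrong.

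The repair uses both steps together.
By Step~2, all constituents $\chi$ lying over a fixed $\eta$ have the same restriction $\chi|_H=e\sum_{k\in K/S}{}^k\eta$. Hence they have the same dimension and, by Frobenius reciprocity, the same multiplicity $\langle\chi|_H,\theta\rangle_H$ in $\mathrm{Ind}_H^K\theta$.
Conjugation by $x\in L_s^F$ maps the constituents over $\eta$ bijectively onto those over ${}^x\eta$. It preserves dimension and, since ${}^x\theta\cong\theta$, also multiplicity.
Every constituent lies over some $\eta$, and the $\eta$'s form one $L_s^F$-orbit, so the lemma follows.

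Your transitivity statement is in fact true in this situation. A commutator computation in the spirit of Lemma~\ref{lemm: stab_2 level} shows that each $\eta$ is $K$-stable and fully ramified over $H$, so $\mathrm{Ind}_H^K\eta$ has a single constituent. But that is an additional argument your proposal does not contain, and without it the inference in Step~3 does not go through.
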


\begin{proof}
Let $\rho$ be an irreducible constituent of $\mathrm{Ind}_{\left(L_s^1G_{r}^{l}\right)^F}^{\left(L_s^1G_{r}^{l'}\right)^F}\widetilde{(\phi_s\otimes\nu)}|_{\left(L_s^1G_{r}^l\right)^F}$. By Clifford theory (see \cite[(6.11)~Theorem~(b)]{Isaacs_CharThy_Book}) we know that for
$$S:=\mathrm{Stab}_{\left(L_s^1G_{r}^{l'}\right)^F}\left(   \widetilde{(\phi_s\otimes\nu)}|_{\left(L_s^1G_{r}^l\right)^F}  \right)$$
the induced representation $\mathrm{Ind}^S_{\left(L_s^1G_{r}^{l}\right)^F}   \widetilde{(\phi_s\otimes\nu)}|_{\left(L_s^1G_{r}^l\right)^F}$ has an irreducible constituent $\rho_S$ satisfying that
$$\rho\cong \mathrm{Ind}_S^{\left(L_s^1G_{r}^{l'}\right)^F}\rho_S.$$
Meanwhile, note that
$$S/(L_s^1G^l_r)^F\subseteq (L_s^1G^{l'}_r)^F/(L_s^1G^l_r)^F=(G^{l'}_r)^F/(L_s^{l'}G^l_r)^F$$
is a quotient of $(G^{l'}_r)^F/(G^l_r)^F=(G_l^{l'})^F$, and hence it  
is abelian. So by \cite[Corollary~1.24(b)]{Navarro_2018_book_McKayConj} every irreducible constituent of $\mathrm{Ind}^S_{\left(L_s^1G_{r}^{l}\right)^F}   \widetilde{(\phi_s\otimes\nu)}|_{\left(L_s^1G_{r}^l\right)^F}$  is of the form 
$\lambda\rho_S$
where $\lambda$ is (the trivial lift of) a $1$-dimensional representation of the abelian group $S/(L_s^1G^l_r)^F$.

\vspace{2mm} In particular, every irreducible constituent of $\mathrm{Ind}_{\left(L_s^1G_{r}^{l}\right)^F}^{\left(L_s^1G_{r}^{l'}\right)^F}\widetilde{(\phi_s\otimes\nu)}|_{\left(L_s^1G_{r}^l\right)^F}$ is 
$$\mathrm{Ind}_S^{\left(L_s^1G_{r}^{l'}\right)^F}\lambda \rho_S,$$
for some $1$-dimensional $\lambda$. This proves the dimension assertion.

\vspace{2mm} Now, since $\lambda$ is by definition trivial on $(L_s^1G_r^l)^F$, the multiplicity assertion   follows immediately  from Clifford theory (\cite[(6.11)~Theorem~(d)]{Isaacs_CharThy_Book}) and the Frobenius reciprocity.
\end{proof}

We shall fix for each pair $(s,\nu)$ an irreducible constituent $\phi_{s,\nu}$   of the  induced representation $\mathrm{Ind}_{\left(L_s^1G_{r}^{l}\right)^F}^{\left(L_s^1G_{r}^{l'}\right)^F}\widetilde{(\phi_s\otimes\nu)}|_{\left(L_s^1G_{r}^l\right)^F}$ in the above. We make the following conjecture concerning its construction and extension:

\begin{conj}\label{conj:odd levels}
Suppose that $r=2l-1$ is odd. Then:
\begin{enumerate}
\item[(i)] One has
$$\mathrm{Ind}_{\left(L_s^1G_{r}^{l}\right)^F}^{\left(L_s^1G_{r}^{l'}\right)^F}\widetilde{(\phi_s\otimes\nu)}|_{\left(L_s^1G_{r}^l\right)^F}
=
\sqrt{\frac{|\left(L_s^1G_{r}^{l'}\right)^F|}{|\left(L_s^1G_{r}^{l}\right)^F|}} \cdot \phi_{s,\nu};$$
    
\item[(ii)] $\phi_{s,\nu}$ admits a canonical extension $\widehat{\phi_{s,\nu}}$ to $(L_sG_r^{l'})^F$, in a way such that, for a given $s$, non-isomorphic $\nu$ yields non-isomorphic $\widehat{\phi_{s,\nu}}$.
\end{enumerate}
\end{conj}

One may compare this conjecture with the constructions in \cite[Section~4]{ChenStasinski_2023_algebraisation_II}, in which it is established for regular semisimple $s$ (after specialising it to $\mathrm{GL}_n$); in particular, the $\phi_{s,\nu}$ might be viewed as a non-abelian version of the Heisenberg lift in \cite[Lemma~4.1]{ChenStasinski_2023_algebraisation_II} (see also \cite[Corollary~3.3]{Stasinski_Stevens_2016_regularRep} and \cite[(8.3.3)~Proposition]{Bushnell_Froelich_book_GaussSum_1983}).

\begin{prop}\label{prop:main-odd}
Suppose that $r=2l-1$ is odd, and assume that Conjecture~\ref{conj:odd levels} holds. Let
$$J_r(s,\nu):=\mathrm{Ind}_{(L_sG_r^{l'})^F}^{G_r^F}\widehat{\phi_{s,\nu}}.$$
Then the same assertions of Theorem~\ref{thm:main} hold, namely:
\begin{enumerate}
\item[(A)] (Irreducibility) The representations $J_r(s,\nu)$ are always irreducible.
\item[(B)] (Uniqueness) If $s,s'\in M_n(\mathbb{F}_q)$ are not $\mathrm{GL}_n(\mathbb{F}_q)$-conjugated, then 
$$\mathrm{Im}(J_r(s,-))\cap \mathrm{Im}(J_r(s',-))=\emptyset;$$
moreover, $J_r(s,-)$ is an injection for every semisimple $s \in M_n(\mathbb{F}_q)$.
\item[(C)] (Exhaustion) One has 
$$\mathrm{Irr}(G_r^F)=\mathrm{Im}(J_r(-,-)).$$
\item[(D)] (Dimension) The ratio $\frac{\dim J_r(s,\nu)}{\dim\nu}$ is a constant depending only on $s$.
\end{enumerate}
\end{prop}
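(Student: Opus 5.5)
The plan is to run the proof of Theorem~\ref{thm:main} again, with the group $(L_sG_r^{l})^F$ replaced by $(L_sG_r^{l'})^F$, and with $\widetilde{\phi_s\otimes\nu}$ replaced by $\widehat{\phi_{s,\nu}}$. Note that now $l'=l-1$.

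For (A), I would first prove an odd-level analogue of Lemma~\ref{lemm: stab_2 level}. The representation $\widetilde{(\phi_s\otimes\nu)}|_{(G_r^l)^F}$ is again an orbit sum $\lambda\sum_{x\in L_s^F}\psi_{{}^x\beta(s,\nu)}$ as in \eqref{temp:orbit sum form on level l ker}, with $\beta(s,\nu)=\rho_{l'}(A_s)+y_\nu$. The inductive argument of Lemma~\ref{lemm: stab_2 level} uses only the Jordan decomposition at level $1$ and the $\mathrm{ad}_A$-eigenspace argument. So it still shows that the stabiliser of this restriction in $G_r^F$ is contained in $(L_sG_r^{l'})^F$; this is the reduction $\rho_{l'}$ of the stabiliser landing in $\rho_{l'}(L_s)$. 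Conversely, $(L_sG_r^{l'})^F$ stabilises it, since $(G_r^{l'})^F$ centralises $(G_r^l)^F$ modulo the kernel of $\psi$.

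Next I need Clifford theory for $\widehat{\phi_{s,\nu}}$. By Conjecture~\ref{conj:odd levels}(i), $\phi_{s,\nu}$ is the unique constituent lying over $\theta:=\widetilde{(\phi_s\otimes\nu)}|_{(L_s^1G_r^l)^F}$, so it is fully ramified over $\theta$. Hence every irreducible of $G_r^F$ containing $\widehat{\phi_{s,\nu}}$ restricts on $(G_r^l)^F$ to characters $\psi_{{}^g\beta}$ whose stabiliser is the $G_r^F$-conjugate of $(L_sG_r^{l'})^F$. Isaacs (6.11)(a), applied with the normal subgroup $(G_r^l)^F$ and an irreducible constituent of its restriction (combined with the orbit-sum description), then gives that $\mathrm{Ind}_{(L_sG_r^{l'})^F}^{G_r^F}\widehat{\phi_{s,\nu}}$ is irreducible. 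The key point is that $\widehat{\phi_{s,\nu}}|_{(G_r^l)^F}$ consists of characters in a single $L_s^F$-orbit whose common stabiliser, modulo $L_s$-conjugation, is exactly $(L_sG_r^{l'})^F$. I would organise this via the intermediate normal subgroup $(L_s^1G_r^{l'})^F$, which is normal in $(L_sG_r^{l'})^F$.

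For (B), the disjointness for non-conjugate $s,s'$ follows as before from Lemma~\ref{lemm:s.s. part of orbit}. Restricting to $(G_r^{r-1})^F\subseteq(G_r^l)^F$, the semisimple parts of the orbits of $J_r(s,\nu)$ are conjugates of $\rho_1(A_s)$. For injectivity, suppose $J_r(s,\nu_1)\cong J_r(s,\nu_2)$. Clifford theory gives $g$ with ${}^g\beta(s,\nu_1)=\beta(s,\nu_2)$, and the inductive argument forces $g\in L_s^F$. After conjugating we get $\widehat{\phi_{s,\nu_1}}$ and $\widehat{\phi_{s,\nu_2}}$ lying over the same orbit data. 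By the uniqueness part of (6.11)(b), they are isomorphic. Conjecture~\ref{conj:odd levels}(ii) then yields $\nu_1\cong\nu_2$.

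For (C), I would count. By (A), (B) and the dimension formula, each fibre $\mathrm{Irr}_s(G_r^F)$ receives injectively the nilpotent orbit representations of $L_s^F$. By Hill's bijection \cite[Theorem~2.13]{Hill_1993_Jordan}, these sets have the same cardinality, so the map is onto. Alternatively, one can argue directly: any $\sigma\in\mathrm{Irr}_s$ lies over some $\psi_{\beta}$ of the above shape, and the Clifford correspondence forces it to come from the unique fully ramified $\phi_{s,\nu}$.

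For (D), the dimension is $[G_r^F:(L_sG_r^{l'})^F]\cdot\dim\phi_{s,\nu}$. By Conjecture~\ref{conj:odd levels}(i), $\dim\phi_{s,\nu}=\dim\nu\cdot\sqrt{|(L_s^1G_r^{l'})^F|/|(L_s^1G_r^{l})^F|}$, and this factor depends only on $s$.

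The main obstacle is the irreducibility step in (A). There one must check that no element outside $(L_sG_r^{l'})^F$ fixes $\phi_{s,\nu}$ up to conjugacy; the stabiliser lemma handles the level-$(G_r^l)^F$ characters, and full ramification transfers this to $\phi_{s,\nu}$.
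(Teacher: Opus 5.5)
Your proposal is correct and follows the paper's own route, which is simply to rerun the proof of Theorem~\ref{thm:main} with $(L_sG_r^{l'})^F$ and $\widehat{\phi_{s,\nu}}$ in place of $(L_sG_r^{l})^F$ and $\widetilde{\phi_s\otimes\nu}$: Conjecture~\ref{conj:odd levels}(ii) gives injectivity, Hill's bijection gives exhaustion, and Conjecture~\ref{conj:odd levels}(i) gives the dimension. One remark: (A) is easier than you suggest and needs no full ramification, since Lemma~\ref{lemm: stab_2 level} is already stated for all $r$ and $\widehat{\phi_{s,\nu}}$ lies over some $\psi_{{}^x\beta(s,\nu)}$ whose stabiliser is contained in $(L_sG_r^{l'})^F$, so Isaacs (6.11)(a) applies directly.
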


\begin{proof}
The exactly same argument of Theorem~\ref{thm:main} works here.
\end{proof}

\section{An example}\label{sec:GL3 r=2}

The irreducible representations of $\mathrm{GL}_2(\mathcal{O}_r)$ have been studied by many authors, and their full classification and construction are known for decades, in various contexts  \cite{kutzko1973characters,Nagornyj_1976_GL2Zpn,Nobs_1977_GL2,Onn_AdvMath_2008,Sta2009smooth}; see \cite[Introduction]{Sta2009smooth} for the history. For $\mathrm{GL}_3$, to exhaust all the irreducible representations is a difficult problem, and the challenging part is to construct the subregular nilpotent ones; when $p>3$ this is solved by Onn--Prasad--Singla \cite[Section~7]{OnnPrasadSingla_2025_zetaA2poschar}.

\vspace{2mm} In the remaining part of this section we assume that $n=3$ and $r=2$ (we make no assumption on $p$). So we are considering
$$G_2^F=\mathrm{GL}_3(\mathcal{O}_2).$$
We want to describe the construction of its irreducible representations in the framework of Jordan induction; we shall be concerned with only the primitive representations in the following sense.

\begin{defi}\label{defi:primitive}
An irreducible representation $\sigma$ of $G_r^F$ is called \emph{primitive} if $\Omega(\sigma)$ does not consist of a central element. Otherwise it is called imprimitive.
\end{defi}

By Theorem~\ref{thm:main}(C) every irreducible representation of $G_r^F$ is of the form $J_r(s,\nu)$, and by Lemma~\ref{lemm:s.s. part of orbit} the semisimple part of every element of $\Omega(J_r(s,\nu))$ is conjugate to $s$. Hence $J_r(s,\nu)$ is primitive if and only if either $s$ is non-central, or $s$ is central and $\Omega(\nu)\neq\{0\}$. Note that a semisimple $s\in M_3(\mathbb{F}_q)$ is conjugate to exactly one of the following:
\begin{itemize}
\item[(i)] a regular semisimple element (three distinct eigenvalues in $\overline{\mathbb{F}}_q$);
\item[(ii)] an element of type $(2,1)$, i.e.\ $\mathrm{diag}(a,a,b)$ with $a\neq b\in\mathbb{F}_q$;
\item[(iii)] a central element $s=c\cdot I_3$, $c\in\mathbb{F}_q$.
\end{itemize}

\vspace{2mm} \noindent {\bf (i) The regular semisimple case}

\vspace{2mm} Suppose that $s$ is regular and semisimple. Then 
$$L_s^F\cong T(\mathcal{O}_2)$$
for a maximal torus $T$ of $\mathrm{GL}_3$ over $\mathcal{O}_2$;  in this case the nilpotent orbit representations of $L_s^F$ are precisely the trivial inflations of the irreducible representations of $(L_s)_{1}^F$. Thus:

\begin{prop}\label{prop:reg ss case}
Let $s\in M_3(\mathbb{F}_q)$ be a regular semisimple element, and let $\nu$ be an imprimitive $1$-dimensional representation of $L_s^F$. Then $J_2(s,\nu)$ is a regular semisimple orbit irreducible representation of $G_2^F$, and every regular semisimple orbit irreducible representation of $G_2^F$ is of this form. So $\dim J_2(s,\nu)=\frac{|\mathrm{GL}_3(\mathbb{F}_q)|}{|(L_s)_{1}^F|}$, and 
\begingroup \renewcommand{\arraystretch}{1.5}
\begin{center}
\begin{tabular}{l|l}
\hline
$\dim J_2(s,\nu)$ & $\#$  \\
\hline
$q^3(q^2+q+1)(q+1)$ & $\frac{1}{6}q(q-1)(q-2)\cdot (q-1)^3$  \\
$q^3(q^3-1)$ & $\frac{1}{2}q(q^2-q)\cdot(q-1)(q^2-1)$  \\
$q^3(q-1)^2(q+1)$ & $\frac{1}{3}(q^3-q)\cdot(q^3-1)$. \\
\hline
\end{tabular}
\end{center}
\endgroup
\end{prop}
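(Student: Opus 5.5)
We specialise Theorem~\ref{thm:main} and Lemma~\ref{lemm:s.s. part of orbit} to $n=3$, $r=2$; then $l=l'=1$, and the counting reduces to the classification of $\mathbb{F}_q$-rational maximal tori of $\mathrm{GL}_3$.

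\textbf{Step 1: the nilpotent orbit representations of $L_s^F$.} Since $s$ is regular semisimple, all $n_i=1$, so by \eqref{temp: defn of rational centraliser}
$$L_s^F\cong\prod_i(\mathcal{O}^{(\lambda_i)}/\pi^2)^\times$$
with $\sum\lambda_i=3$. This group is abelian, so its irreducible representations are characters. The Lie algebra $\rho_1(\mathcal{L}_s)$ is a product of fields $\prod\mathbb{F}_{q^{\lambda_i}}$, whose only nilpotent element is $0$. Hence $\nu$ is a nilpotent orbit representation if and only if $\Omega(\nu)=\{0\}$, i.e.\ $\nu$ is trivial on $(L_s^1)^F$. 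So $\nu$ is the inflation of a character of $(L_s)_1^F$, and $\nu$ is automatically imprimitive and $1$-dimensional.

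\textbf{Step 2: orbits, exhaustion and dimension.} By Lemma~\ref{lemm:s.s. part of orbit}, $\Omega(J_2(s,\nu))$ contains $\rho_1(A_s)+0$, a regular semisimple element, so $J_2(s,\nu)$ is a regular semisimple orbit representation; it is irreducible by Theorem~\ref{thm:main}(A). Conversely, let $\sigma$ have a regular semisimple orbit. By Theorem~\ref{thm:main}(C), $\sigma=J_2(s',\nu')$ for some $(s',\nu')$, and Lemma~\ref{lemm:s.s. part of orbit} shows that $s'$ is the semisimple part of a regular semisimple element, so $s'$ is regular semisimple. For the dimension, since $\dim\nu=1$ we get
$$\dim J_2(s,\nu)=[G_2^F:(L_sG_2^1)^F]=\frac{|G_2^F|}{|L_s^F|\,|(G_2^1)^F|/|(L_s^1)^F|}=\frac{|\mathrm{GL}_3(\mathbb{F}_q)|}{|(L_s)_1^F|},$$
using $|(G_2^1)^F|=q^9$ and $|(L_s^1)^F|=q^3$.

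\textbf{Step 3: the table.} Split according to the type of the torus: $\lambda=(1,1,1)$, $(2,1)$, or $(3)$, for which $|(L_s)_1^F|$ equals $(q-1)^3$, $(q^2-1)(q-1)$, $q^3-1$ respectively. This gives the dimensions $q^3(q^2+q+1)(q+1)$, $q^3(q^3-1)$ and $q^3(q-1)^2(q+1)$.

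The count of representations of each dimension is
$$\#\{\text{conjugacy classes of } s \text{ of that type}\}\times\#\{\nu\}=\#\{s\}\cdot|(L_s)_1^F|.$$
This uses the injectivity and disjointness in Theorem~\ref{thm:main}(B), and the fact that the parameter $\nu$ ranges over all characters of $(L_s)_1^F$, with no further quotient by the Weyl group. The last point is justified because Theorem~\ref{thm:main}(B) says $J_r(s,-)$ is injective for a fixed representative $s$.

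The numbers of classes of $s$ are:
\begin{itemize}
\item three distinct elements of $\mathbb{F}_q$: $\binom{q}{3}$;
\item an element of $\mathbb{F}_q$ together with a degree-$2$ irreducible polynomial: $q\cdot\frac{q^2-q}{2}$;
\item a degree-$3$ irreducible polynomial: $\frac{q^3-q}{3}$.
\end{itemize}
Multiplying these by the orders of the tori above reproduces the three rows of the table.

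\textbf{Main obstacle.} The only delicate point is this counting convention: one must confirm that the injectivity statement is over all $\nu$ for a fixed lift $A_s$, with $s$ ranging over classes. This is consistent with how Theorem~\ref{thm:main}(B) is phrased, since $\nu$ is determined up to isomorphism rather than up to the normaliser of $L_s$. Everything else is routine computation of group orders.
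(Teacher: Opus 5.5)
Your proposal is correct and takes the same route as the paper. The paper's justification is the brief remark preceding the proposition: $L_s^F\cong T(\mathcal{O}_2)$ is a torus, so its nilpotent orbit representations are exactly the inflations of characters of $(L_s)_1^F$. The statement then follows from Theorem~\ref{thm:main} together with a count of conjugacy classes of $s$ by the factorisation type of the characteristic polynomial. Your index computation $\dim J_2(s,\nu)=|\mathrm{GL}_3(\mathbb{F}_q)|/|(L_s)_1^F|$ and your check that no Weyl-group quotient on $\nu$ is needed (since $J_2(s,-)$ is injective for a fixed representative $s$) are precisely the details the paper leaves implicit.
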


\vspace{2mm}\noindent {\bf (ii) The type $(2,1)$ case}

\vspace{2mm} Suppose that $s$ is conjugate to $\mathrm{diag}(a,a,b)$ with $a\neq b\in\mathbb{F}_q$. Then 
$$L_s^F\cong \mathrm{GL}_2(\mathcal{O}_2)\times \mathcal{O}_2^{\times}.$$
An irreducible representation $\nu$ of $L_s^F$ has a nilpotent orbit if and only if it is of the form
$$\nu=\nu_2\boxtimes\chi,$$
where $\nu_2$ is a nilpotent orbit representation of $\mathrm{GL}_2(\mathcal{O}_2)$ and $\chi$ is a character of $\mathcal{O}_2^{\times}$ trivially   inflated from $\mathbb{F}_q^{\times}$. If the orbit of $\nu_2$ is non-zero, then $\nu_2$ is a regular nilpotent orbit representation of $\mathrm{GL}_2(\mathcal{O}_2)$, whose construction is given in \cite[Section~3]{Sta2009smooth}; we recall it here: Let 
$N={\begin{pmatrix}
        0 & 1\\
        0 & 0
    \end{pmatrix}}$.
Then these representations are of the form 
\begin{equation}\label{temp:regular nilp of GL_2}
\nu_2=\mathrm{Ind}_{C(N)\cdot(G_2^1)^F}^{\mathrm{GL}_2(\mathcal{O}_2)}\tilde{\psi},
\end{equation}
where 
$$C(N)=
\left\{ 
    \begin{pmatrix}
        a & b\\
        0 & a
    \end{pmatrix},
    a\in\mathcal{O}_2^{\times}, b\in \mathcal{O}_2
\right\}$$ 
and $\tilde{\psi}$ is an extension of $\psi_N(-)=\psi(\mathrm{Tr}(N\cdot a(-)))$ from $(G_2^1)^F$ to $C(N)\cdot(G_2^1)^F$; the extensions $\tilde{\psi}$, by Clifford theory, are in bijection with the linear characters of the abelian group $C(N)/(C(N)\cap G_2^1)$. In particular, those $\nu_2$ all have the dimension $$\dim\nu_2=q^2-1$$
and the number of their isomorphism classes  is
$(q-1)q$. On the other hand, if $\Omega(\nu_2)=\{ 0 \}$, then $\nu_2$ is the trivial inflation of an element in $\mathrm{Irr}(\mathrm{GL}_2(\mathbb{F}_q))$, which can be of dimension $1$, $q$, $q+1$, or $q-1$; the numbers of their isomorphism classes are $q-1$, $q-1$, $\frac{1}{2}(q-1)(q-2)$, and $\frac{1}{2}(q^2-q)$, respectively. Therefore we have:

\begin{prop}\label{prop:(2,1) case}
Every irreducible representation of $\mathrm{GL}_3(\mathcal{O}_2)$ whose orbit has a type $(2,1)$ semisimple part is of the form
$$J_2(s,\nu)=\mathrm{Ind}_{(L_sG_2^1)^F}^{\mathrm{GL}_3(\mathcal{O}_2)}
\widetilde{\phi_{s}\otimes(\nu_2\boxtimes\chi)},$$
where $s$ be of type $(2,1)$ and $\nu=\nu_2\boxtimes\chi$ is a nilpotent orbit representation of $L_s^F$ as above. 
Moreover, $\nu_2$ is either a regular nilpotent orbit irreducible representation of $\mathrm{GL}_2(\mathcal{O}_2)$, or the trivial inflation of an irreducible representation of $\mathrm{GL}_2(\mathbb{F}_q)$ (whose orbit is $\{0\}$). In the first case $\nu_2$ is given by \eqref{temp:regular nilp of GL_2} and is of dimension $q^2-1$, and
\begingroup \renewcommand{\arraystretch}{1.5}
\begin{center}
\begin{tabular}{l|l}
\hline
 $\dim J_2(s,\nu)$ & $\#$ \\
\hline
$q^{2}(q^2+q+1)\cdot(q^2-1)$ & $(q-1)q\cdot (q-1) \cdot (q-1)q$. \\
\hline
\end{tabular}
\end{center}
\endgroup
On the other hand, if $\Omega(\nu_2)=\{ 0 \}$, then
\begingroup \renewcommand{\arraystretch}{1.5}
\begin{center}
\begin{tabular}{l|l}
\hline
$\dim J_2(s,\nu)$ & $\#$ \\
\hline
$q^{2}(q^2+q+1)$ & $(q-1)\cdot (q-1) \cdot (q-1)q$ \\
$q^{2}(q^2+q+1)\cdot q$ & $(q-1)\cdot (q-1) \cdot (q-1)q$ \\
$q^{2}(q^2+q+1)\cdot (q+1)$ & $\frac{1}{2}(q-1)(q-2) \cdot (q-1) \cdot (q-1)q$ \\
$q^{2}(q^2+q+1)\cdot (q-1)$ & $\frac{1}{2}(q^2-q) \cdot (q-1) \cdot (q-1)q$. \\
\hline
\end{tabular}
\end{center}
\endgroup
\end{prop}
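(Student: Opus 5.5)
The first claim follows from the exhaustion part of Theorem~\ref{thm:main}. By part (C), every $\sigma\in\mathrm{Irr}(\mathrm{GL}_3(\mathcal{O}_2))$ equals some $J_2(s,\nu)$. By Lemma~\ref{lemm:s.s. part of orbit}, the semisimple parts of elements of $\Omega(\sigma)$ are conjugate to $s$. So if the orbit of $\sigma$ has a type $(2,1)$ semisimple part, then $s$ is conjugate to $\mathrm{diag}(a,a,b)$ with $a\neq b\in\mathbb{F}_q$. In that case \eqref{temp: defn of rational centraliser} gives $L_s^F\cong\mathrm{GL}_2(\mathcal{O}_2)\times\mathcal{O}_2^\times$, and Definition~\ref{defi:Jordan ind} with $l=1$ produces the displayed induction from $(L_sG_2^1)^F$.

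Next I classify the nilpotent orbit representations $\nu$ of $L_s^F$. Every irreducible $\nu$ is an outer tensor product $\nu_2\boxtimes\chi$. Its orbit, the block-diagonal orbit in $\mathfrak{gl}_2(\mathbb{F}_q)\times\mathbb{F}_q$, is nilpotent exactly when both components are nilpotent. For $\chi$ this means the orbit is $\{0\}$, so $\chi$ is trivial on $1+\pi\mathcal{O}_2$, i.e.\ inflated from $\mathbb{F}_q^\times$, which gives $q-1$ choices. For $\nu_2$, the nilpotent orbits in $\mathfrak{gl}_2(\mathbb{F}_q)$ are $\{0\}$ and the regular nilpotent orbit of $N$.
\begin{itemize}
\item If the orbit is $\{0\}$, then $\nu_2$ is trivial on the kernel $(G_2^1)^F$, so it is an inflation from $\mathrm{GL}_2(\mathbb{F}_q)$. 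The classical dimensions and counts of $\mathrm{Irr}(\mathrm{GL}_2(\mathbb{F}_q))$ are as listed before the proposition.
\item If the orbit is that of $N$, then $\mathrm{Stab}(\psi_N)=C(N)(G_2^1)^F$. Lemma~\ref{lemm:nilp char trivial extension} (valid for $r=2$ by Remark~\ref{remark:from orbit to irrep: char=0 and r=2}) shows $\psi_N$ extends, and Clifford theory gives \eqref{temp:regular nilp of GL_2}.
\end{itemize}
The extensions of $\psi_N$ are parametrised by characters of $C(N)/(C(N)\cap G_2^1)\cong\{(a,b)\}$ over $\mathbb{F}_q$, a group of order $(q-1)q$. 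Distinct extensions induce to non-isomorphic representations by Clifford correspondence (Isaacs 6.11(b)). The dimension is $[\mathrm{GL}_2(\mathcal{O}_2):C(N)G_2^1]=|\mathrm{GL}_2(\mathbb{F}_q)|/(q(q-1))=q^2-1$.

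For the dimensions of $J_2(s,\nu)$, I use
$$\dim J_2(s,\nu)=[G_2^F:(L_sG_2^1)^F]\cdot\dim\nu.$$
The index equals $|\mathrm{GL}_3(\mathbb{F}_q)|/|\mathrm{GL}_2(\mathbb{F}_q)\times\mathbb{F}_q^\times|=q^2(q^2+q+1)$. Multiplying by $\dim\nu_2\in\{q^2-1,1,q,q+1,q-1\}$ gives the tables.

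For the counts, Theorem~\ref{thm:main}(B) makes $J_2(s,-)$ injective, and different $s$ give disjoint images. So the count is the product of three factors:
\begin{enumerate}
\item the number of choices of $\nu_2$;
\item the $q-1$ choices of $\chi$;
\item the number of conjugacy classes of type $(2,1)$ semisimple $s$, namely ordered pairs $a\neq b$ in $\mathbb{F}_q$, giving $q(q-1)$.
\end{enumerate}
This yields the stated numbers.

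The main care point is that the parameter is the conjugacy class of $s$ together with $\nu$ up to isomorphism. One must check that no nontrivial identification arises: the normaliser of $L_s^F$ acts trivially on isomorphism classes here, since the two blocks have different sizes and cannot be swapped. The other point to be careful about is justifying the count of $q(q-1)$ distinct extensions $\tilde\psi$; Clifford theory with an abelian quotient handles this.
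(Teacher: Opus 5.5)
Your proposal is correct and follows essentially the same route as the paper. It uses exhaustion via Theorem~\ref{thm:main}(C) with Lemma~\ref{lemm:s.s. part of orbit}, the Clifford-theoretic classification of the nilpotent orbit representations $\nu_2\boxtimes\chi$ of $L_s^F\cong\mathrm{GL}_2(\mathcal{O}_2)\times\mathcal{O}_2^{\times}$, the index $[G_2^F:(L_sG_2^1)^F]=q^2(q^2+q+1)$, and counting via Theorem~\ref{thm:main}(B); you simply make explicit the counting step (the $q(q-1)$ classes of $s$ and the appeal to injectivity and disjointness) that the paper leaves implicit in its ``Therefore we have''.
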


\vspace{2mm}\noindent {\bf (iii) The central case}

\vspace{2mm} Suppose that $s=c\cdot I_3$  ($c\in\mathbb{F}_q$). Then $L_s=G_2^F$   and  
$$J_2(s,\nu)=\phi_{c\cdot I_3}\otimes\nu,$$
where $\nu$ is a nilpotent orbit representation of $G_2^F$. Thus $J_2(s,\nu)$ is primitive if and only if $\nu$ is either regular nilpotent or subregular nilpotent.

\vspace{2mm}\noindent {\bf (iii-1) The regular nilpotent orbit}

\vspace{2mm}
Let $N:=
\begin{pmatrix}
    0 & 1 & 0 \\
    0 & 0 & 1 \\
    0 & 0 & 0
\end{pmatrix}$.
Then for 
$C(N)
:=
\left\{
    \begin{pmatrix}
    a & b & c \\
    0 & a & b \\
    0 & 0 & a
    \end{pmatrix} \mid a\in\mathcal{O}_2^{\times},\ b,c\in\mathcal{O}_2
\right\}$,
one has $\mathrm{Stab}_{G_2^F}(\psi_{N})=C(N)(G_2^1)^F$. Since $C(N)(\cong (\mathcal{O}_2[x]/x^3)^{\times})$ is abelian, by Clifford theory \cite[(6.17)~Corollary]{Isaacs_CharThy_Book} all the extensions of $\psi_N$ to $C(N)(G_2^1)^F$ are of the form $\chi \tilde{\psi}_N$, where $\tilde{\psi}_N$ is defined as in Lemma~\ref{lemm:nilp char trivial extension} (see also Remark~\ref{remark:from orbit to irrep: char=0 and r=2}) and $\chi$ is a $1$-dimensional representation $\chi$ of $C_{G_1^F}(N)\cong (\mathbb{F}_q[x]/x^3)^{\times}$. Thus, similar to \eqref{temp:regular nilp of GL_2}, the regular nilpotent orbit representations of $\mathrm{GL}_3(\mathcal{O}_2)$ are
\begin{equation}\label{temp:regular nilp of GL_3}
\nu=\mathrm{Ind}_{C(N)\cdot(G_2^1)^F}^{\mathrm{GL}_3(\mathcal{O}_2)}\chi\tilde{\psi}_N,
\end{equation}
where $\chi$ runs over the $1$-dimensional representations of $C_{G_1^F}(N)$. In summary:

\begin{prop}\label{prop:reg nilp}
Every irreducible representation of $\mathrm{GL}_3(\mathcal{O}_2)$ whose orbit has a central semisimple part and a regular nilpotent part is of the form
$$J_2(c\cdot I_3,\nu)=\phi_{c\cdot I_3}\otimes \mathrm{Ind}_{C(N)\cdot(G_2^1)^F}^{\mathrm{GL}_3(\mathcal{O}_2)}\chi\tilde{\psi}_N,$$
where $c\in\mathbb{F}_q$ and $\chi\in\mathrm{Irr}(C_{G_1^F}(N))$. We have
\begingroup \renewcommand{\arraystretch}{1.5}
\begin{center}
\begin{tabular}{l|l}
\hline
$\dim J_2(s,\nu)$ & $\#$ \\
\hline
$q(q^2-1)(q^3-1)$ & $q \cdot q^2(q-1)$. \\
\hline
\end{tabular}
\end{center}
\endgroup
\end{prop}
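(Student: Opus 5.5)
The statement bundles three claims: the regular nilpotent orbit representations of $\mathrm{GL}_3(\mathcal{O}_2)$ with central semisimple part are exactly the $\phi_{c\cdot I_3}\otimes\nu$ with $\nu$ as in \eqref{temp:regular nilp of GL_3}; each has dimension $q(q^2-1)(q^3-1)$; and there are $q\cdot q^2(q-1)$ of them. I would prove them in that order.

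\emph{Parametrisation.} By Theorem~\ref{thm:main}(C) and Lemma~\ref{lemm:s.s. part of orbit}, every such representation is $J_2(c\cdot I_3,\nu)=\phi_{c\cdot I_3}\otimes\nu$, where $c\in\mathbb{F}_q$ and $\nu$ is a nilpotent orbit representation with $\Omega(\nu)$ the regular nilpotent class. By Theorem~\ref{thm:main}(B) distinct $c$ give disjoint families, and for fixed $c$ distinct $\nu$ give non-isomorphic representations. It therefore suffices to classify the $\nu$.

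\emph{Classification of $\nu$.} By Clifford theory with respect to $(G_2^1)^F\cong M_3(\mathbb{F}_q)$, every such $\nu$ lies over $\psi_{N}$ for $N$ the standard regular nilpotent, since all regular nilpotents are conjugate. Its stabiliser is $C(N)(G_2^1)^F$, because $\mathrm{Stab}$ of $\psi_N$ is the preimage of $C_{G_1^F}(N)$, and $C(N)$ surjects onto it. By Lemma~\ref{lemm:nilp char trivial extension} (valid for $r=2$ in all characteristics by Remark~\ref{remark:from orbit to irrep: char=0 and r=2}), $\psi_N$ extends to $\tilde\psi_N$. The quotient $C(N)(G_2^1)^F/(G_2^1)^F\cong C_{G_1^F}(N)\cong(\mathbb{F}_q[x]/x^3)^\times$ is abelian, so by Gallagher (\cite[(6.17)]{Isaacs_CharThy_Book}) the irreducible representations over $\psi_N$ are exactly the $\chi\tilde\psi_N$. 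Each $\chi\tilde\psi_N$ is one-dimensional, and Clifford correspondence (\cite[(6.11)]{Isaacs_CharThy_Book}) makes the $\nu$ irreducible and pairwise non-isomorphic.

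\emph{Count and dimension.} The count is $|\mathbb{F}_q|\cdot|(\mathbb{F}_q[x]/x^3)^\times|=q\cdot q^2(q-1)$. The dimension is the index
$$[\mathrm{GL}_3(\mathbb{F}_q):C_{G_1^F}(N)]=\frac{q^3(q-1)(q^2-1)(q^3-1)}{q^2(q-1)}=q(q^2-1)(q^3-1).$$

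\emph{Main obstacle.} The only delicate point is the stabiliser description $\mathrm{Stab}=C(N)(G_2^1)^F$. Concretely, one must check that $\mathrm{Stab}$ reduces onto $C_{G_1^F}(N)$, which uses conjugation-invariance of the trace together with non-degeneracy of the trace pairing (as in \eqref{temp:char of Lie alg}). One must also check that $C(N)$ surjects onto $C_{G_1^F}(N)$, which holds because the lifted polynomial matrices commute with $N$. Both verifications are elementary.
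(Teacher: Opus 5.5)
Your proposal is correct and takes essentially the same route as the paper. You reduce to $\phi_{c\cdot I_3}\otimes\nu$ via Theorem~\ref{thm:main}, identify $\mathrm{Stab}_{G_2^F}(\psi_N)=C(N)(G_2^1)^F$, extend $\psi_N$ to $\tilde\psi_N$ (Lemma~\ref{lemm:nilp char trivial extension} together with Remark~\ref{remark:from orbit to irrep: char=0 and r=2}), apply Gallagher \cite[(6.17)]{Isaacs_CharThy_Book} and the Clifford correspondence, and then count the $\chi$ and compute the index. Your wording is slightly more careful than the paper's in one respect: you correctly attribute the linearity of all constituents to the abelianness of the quotient $C_{G_1^F}(N)$, rather than of $C(N)$.
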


\vspace{2mm}\noindent {\bf (iii-2) The subregular nilpotent orbit}

\vspace{2mm} Let 
$N=\begin{pmatrix}
    0 & 1 & 0 \\
    0 & 0 & 0 \\
    0 & 0 & 0
\end{pmatrix}$.
Note that its centraliser in ${\mathrm{GL}_3(\mathcal{O}_2)}$ is
$$C(N)=
\left\{
    \begin{pmatrix}
        a & b & c\\
        0 & a & 0\\
        0 & d & e
    \end{pmatrix} \mid  a,e \in \mathcal{O}_2^{\times},\ b,c,d \in \mathcal{O}_2
\right\},$$
and its centraliser in ${\mathrm{GL}_3(\mathbb{F}_q)}$ is $\rho_{2,1}(C(N))$; so $\mathrm{Stab}_{G_2^F}(\psi_N)=C(N)\cdot(G_2^1)^F$. As in the case of (iii-1), all the subregular nilpotent orbit representations of $G_2^F$ are of the form
\begin{equation}\label{temp:subregular nilp of GL_3}
\nu=\mathrm{Ind}_{C(N)\cdot(G_2^1)^F}^{\mathrm{GL}_3(\mathcal{O}_2)}\chi\tilde{\psi}_N,
\end{equation}
where $\chi$ runs over $\mathrm{Irr}(C_{G_1^F}(N))$. So we only need to find out the irreducible representations of $C_{G_1^F}(N)$.

\vspace{2mm} Note that
$$C_{G_1^F}(N)=\rho_{2,1}(C(N))=D\ltimes H$$
where 
$D:=\{ \mathrm{diag}(a,a,e)\mid a,e\in\mathbb{F}_q^{\times} \}$
and 
$$H:=\left\{
    \begin{pmatrix}
        1 & b & c\\
        0 & 1 & 0\\
        0 & d & 1
    \end{pmatrix} \mid b,c,d \in \mathbb{F}_q
    \right\}
\cong 
\left\{
    \begin{pmatrix}
        1 & c & b\\
        0 & 1 & d\\
        0 & 0 & 1
    \end{pmatrix} \mid  b,c,d \in \mathbb{F}_q
\right\}.$$
Being isomorphic to the Heisenberg group over $\mathbb{F}_q$, the construction of irreducible representations of $H$ is well-known (details can be found in \cite[Section~1.2 and Section~1.3]{Gerardin_1977_Weil}): There are $q^2$ irreducible representations of dimension $1$ (the ones trivial on the centre) and $q-1$ irreducible representations of dimension $q$ (the ones determined by the non-trivial central characters). 

\vspace{2mm} Every $q$-dimensional $\sigma_q\in\mathrm{Irr}(H)$ is invariant under the action of $D$, as $D$ does not affect the centre of $H$. So, since $|D|$ is coprime to $|H|$, by \cite[Corollary~6.2]{Navarro_2018_book_McKayConj} we see that $\sigma_q$ extends to $C_{G_1}(N)$ in $|D|=(q-1)^2$ different ways. The trivial representation $1_H$ clearly also extends to $C_{G_1}(N)$ in $(q-1)^2$ different ways. It remains to consider the non-trivial $1$-dimensional representations $\sigma_1$ of $H$.

\vspace{2mm} Note that $\mathrm{diag}(a,a,e)\in D$ acts on $H$ by taking the entry triple $(b,c,d)$ to $(b,ae^{-1}c,ea^{-1}d)$. So the stabiliser of every $\sigma_1$ in $C_{G_1^F}(N)$ is $Z_{G_1^F}H$, and the $D$-orbit of $\sigma_1$ consists of $q-1$ elements. For the $D$-orbit of $\sigma_1$ we choose a representative $\sigma_1^o$. Since $Z_{G_1^F}$ is the centre, we can extend $\sigma_1^o$ to $Z_{G_1^F}H$, in $|Z_{G_1^F}|=q-1$ different ways. Thus by Clifford theory,
$$\mathrm{Ind}_{Z_{G_1^F}H}^{C_{G_1^F}(N)}z\sigma_1^o$$
for $z\in\mathrm{Irr}(Z_{G_1^F})$, produces $q-1$ non-isomorphic irreducible representations of dimension $q-1$. As there are totally $q+1$ orbits of non-trivial $1$-dimensional representations of $H$, this process gives $q^2-1$ irreducible representations of dimension $q-1$ of $C_{G_1^F}(N)$.

\vspace{2mm} Summing the squares of the dimensions of the irreducible representations obtained above, one sees that all irreducible representations of $C_{G_1^F}(N)$ are exhausted. So we get:

\begin{prop}\label{prop:subreg nilp}
Every irreducible representation of $\mathrm{GL}_3(\mathcal{O}_2)$ whose orbit has a central semisimple part and a subregular nilpotent part is of the form
$$J_2(c\cdot I_3,\nu)=\phi_{c\cdot I_3}\otimes \mathrm{Ind}_{C(N)\cdot(G_2^1)^F}^{\mathrm{GL}_3(\mathcal{O}_2)}\chi\tilde{\psi}_N,$$
where $c\in\mathbb{F}_q$ and $\chi\in\mathrm{Irr}(C_{G_1^F}(N))$. These $\chi$'s are in three classes: 
\begin{itemize}
\item $(q-1)^2$ extensions of $q-1$ irreducible representations $\sigma_q$ of dimension $q$ of $H$,
\item $(q-1)^2$ extensions of the trivial representation $1_H$ of $H$, and
\item $q^2-1$ induced representations $\mathrm{Ind}_{Z_{G_1^F}H}^{C_{G_1^F}(N)}z\sigma_1^o$ of dimension $q-1$.
\end{itemize}
Thus
\begingroup \renewcommand{\arraystretch}{1.5}
\begin{center}
\begin{tabular}{l|l}
\hline
$\dim J_2(s,\nu)$ & $\#$ \\
\hline
$(q+1)(q^3-1)\cdot q$ & $q(q-1)^3$ \\
$(q+1)(q^3-1)$ & $q(q-1)^2$ \\
$(q+1)(q^3-1)\cdot (q-1)$ & $q(q^2-1)$. \\
\hline
\end{tabular}
\end{center}
\endgroup
\end{prop}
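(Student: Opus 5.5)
The proof treats the Clifford-theoretic part and the numerical part separately. First I would show that every irreducible representation with orbit meeting $c I_3+(\text{subregular nilpotent})$ is of the stated form. By Theorem~\ref{thm:main}(C) and Lemma~\ref{lemm:s.s. part of orbit}, such a representation is $J_2(s,\nu)$ with $s$ conjugate to $c I_3$. Since $s$ is central, $L_s=G_2$ and $(L_sG_2^1)^F=G_2^F$, so $J_2(cI_3,\nu)=\phi_{cI_3}\otimes\nu$. Here $\nu$ is a nilpotent orbit representation whose orbit is the subregular class, because twisting by the $1$-dimensional $\phi_{cI_3}$ shifts the orbit by $cI_3$.

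Next I would determine $\nu$ itself.
\begin{enumerate}
\item After conjugating, $\nu|_{(G_2^1)^F}$ contains $\psi_N$ for the fixed $N$.
\item The stabiliser of $\psi_N$ is $C(N)(G_2^1)^F$, since the centraliser of $N$ in $\mathrm{GL}_3(\mathbb{F}_q)$ is $\rho_{2,1}(C(N))$.
\item $\psi_N$ extends to $\tilde{\psi}_N$ by Lemma~\ref{lemm:nilp char trivial extension}. For $r=2$ this holds in any characteristic by Remark~\ref{remark:from orbit to irrep: char=0 and r=2}.
\item By Gallagher's theorem \cite[(6.17)~Corollary]{Isaacs_CharThy_Book}, the irreducible representations of the stabiliser lying over $\psi_N$ are exactly the $\chi\tilde{\psi}_N$, with $\chi\in\mathrm{Irr}(C(N)(G_2^1)^F/(G_2^1)^F)=\mathrm{Irr}(C_{G_1^F}(N))$, and these are pairwise non-isomorphic.
\item By the Clifford correspondence \cite[(6.11)~Theorem]{Isaacs_CharThy_Book}, each $\mathrm{Ind}\,\chi\tilde{\psi}_N$ is irreducible, every $\nu$ arises this way, and distinct $\chi$ give distinct $\nu$.
\end{enumerate}
Distinct $c$ give distinct representations because their orbits have different semisimple parts.

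It then remains to classify $\mathrm{Irr}(C_{G_1^F}(N))$ using $C_{G_1^F}(N)=D\ltimes H$, along the lines sketched before the statement.
\begin{itemize}
\item First I would check that $\mathrm{diag}(a,a,e)$ acts on $H$ by $(b,c,d)\mapsto(b,ae^{-1}c,ea^{-1}d)$. It therefore fixes the centre $\{(b,0,0)\}$ of $H$ pointwise.
\item Each $q$-dimensional $\sigma_q$ is determined by its central character, so it is $D$-invariant. Since $\gcd(|D|,|H|)=1$, it extends, and Gallagher gives $(q-1)^2$ extensions.
\item The trivial character $1_H$ likewise gives $(q-1)^2$ extensions.
\item A non-trivial linear character of $H/Z(H)\cong\mathbb{F}_q^2$ has stabiliser $Z_{G_1^F}H$, hence a $D$-orbit of size $q-1$. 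There are $(q^2-1)/(q-1)=q+1$ such orbits, each contributing $q-1$ induced characters of degree $q-1$, for $q^2-1$ in total.
\end{itemize}
The main point needing care is completeness of this list. I would verify it by the sum of squares:
$$(q-1)^3q^2+(q-1)^2+(q^2-1)(q-1)^2=(q-1)^2q^3=|C_{G_1^F}(N)|.$$

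Finally, for the table, $\dim J_2(cI_3,\nu)=[G_2^F:C(N)(G_2^1)^F]\cdot\dim\chi$. The index equals
$$\frac{|\mathrm{GL}_3(\mathbb{F}_q)|}{|C_{G_1^F}(N)|}=\frac{q^3(q-1)^3(q+1)(q^2+q+1)}{q^3(q-1)^2}=(q+1)(q^3-1).$$
Multiplying by $\dim\chi\in\{q,1,q-1\}$ gives the three dimensions. The counts are $q$ (choices of $c$) times $(q-1)^3$, $(q-1)^2$ and $q^2-1$ respectively, which matches the table.
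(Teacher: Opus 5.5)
Your proposal is correct and follows essentially the same route as the paper. You reduce via Theorem~\ref{thm:main}(C) and Clifford--Gallagher theory to $\chi\in\mathrm{Irr}(C_{G_1^F}(N))$, then classify these characters through $C_{G_1^F}(N)=D\ltimes H$ (the $D$-invariant $q$-dimensional Heisenberg characters extending by coprimality, the trivial character, and the $q+1$ orbits of size $q-1$ of non-trivial linear characters), and confirm completeness by the sum of squares. Along the way you also make explicit points the paper leaves tacit: the index $(q+1)(q^3-1)$, the distinctness of the representations for different $c$, and the validity of $\tilde{\psi}_N$ for $r=2$ in any characteristic.
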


\begin{remark}
Summing the squares of the dimensions (with the multiplicities) in the tables obtained in Proposition~\ref{prop:reg ss case}, Proposition~\ref{prop:(2,1) case}, Proposition~\ref{prop:reg nilp}, Proposition~\ref{prop:subreg nilp}, we get a polynomial $\delta(q):=q(q^8-1)(q^3-1)(q^3-q)(q^3-q^2)$. Thus the equality
$$
\delta(q)+q\cdot|\mathrm{GL}_3(\mathbb{F})_q|=q^9(q^3-1)(q^3-q)(q^3-q^2)=|\mathrm{GL}_3(\mathcal{O}_2)|
$$
verifies that we obtained all the primitive irreducible representations.
\end{remark}

\bibliographystyle{alpha}
\bibliography{zchenrefs}

\begin{thebibliography}{AKOV16}

\bibitem[AKOV16]{AvniKlopschOnnVoll_2016_similarity}
Nir Avni, Benjamin Klopsch, Uri Onn, and Christopher Voll.
\newblock Similarity classes of integral {$p$}-adic matrices and representation
  zeta functions of groups of type {$A_2$}.
\newblock {\em Proc. Lond. Math. Soc. (3)}, 112(2):267--350, 2016.

\bibitem[AOPS10]{Aubert_Onn_Prasad_Stasinski_Israelpaper_2010}
Anne-Marie Aubert, Uri Onn, Amritanshu Prasad, and Alexander Stasinski.
\newblock On cuspidal representations of general linear groups over discrete
  valuation rings.
\newblock {\em Israel J. Math.}, 175:391--420, 2010.

\bibitem[BF83]{Bushnell_Froelich_book_GaussSum_1983}
Colin~J. Bushnell and Albrecht Fr\"{o}hlich.
\newblock {\em Gauss sums and {$p$}-adic division algebras}, volume 987 of {\em
  Lecture Notes in Mathematics}.
\newblock Springer-Verlag, Berlin-New York, 1983.

\bibitem[Car93]{Carter1993FiGrLieTy}
Roger~W. Carter.
\newblock {\em Finite groups of {L}ie type}.
\newblock Wiley Classics Library. John Wiley \& Sons Ltd., Chichester, 1993.
\newblock Conjugacy classes and complex characters, Reprint of the 1985
  original, A Wiley-Interscience Publication.

\bibitem[CF25]{Chen_Feng_2025_classnumber_InvariantCharacters}
Zhe Chen and Yongqi Feng.
\newblock Class numbers and invariant characters of
  $\mathfrak{sl}_2(\mathbb{F}_p)$.
\newblock {\em arXiv preprint arXiv:2508.17214}, 2025.

\bibitem[Cha24]{Chan_2024_Scalar_Product}
Charlotte Chan.
\newblock The scalar product formula for parahoric {D}eligne--{L}usztig
  induction.
\newblock {\em arXiv preprint arXiv:2405.00671}, 2024.

\bibitem[Che20]{Chen_2019_flag_orbit}
Zhe Chen.
\newblock Flags and orbits of connected reductive groups over local rings.
\newblock {\em Math. Ann.}, 376(3-4):1449--1466, 2020.

\bibitem[Che21]{Chen_2016_GenericCharSh}
Zhe Chen.
\newblock Generic character sheaves on reductive groups over a finite ring.
\newblock {\em J. Pure Appl. Algebra}, 225(3):Paper No. 106521, 14, 2021.

\bibitem[Che24]{Chen_2024_Hopfalg_duality}
Zhe Chen.
\newblock Hopf algebra and the duality operation for
  $\mathfrak{gl}_n(\mathbb{F}_q)$.
\newblock {\em arXiv preprint arXiv:2408.06191}, 2024.

\bibitem[Che25]{Chen_2025_stability_higher_Coxeter_unipotent}
Zhe Chen.
\newblock On a stability of higher level {C}oxeter unipotent representations.
\newblock {\em manuscripta math.}, 176(5), 2025.

\bibitem[CMO17]{Crisp_Meir_Onn_varHarishChandra}
Tyrone Crisp, Ehud Meir, and Uri Onn.
\newblock A variant of {H}arish-{C}handra functors.
\newblock {\em Journal of the {I}nstitute of {M}athematics of {J}ussieu}, pages
  1--57, 2017.

\bibitem[CMO24]{Crisp_Meir_Onn_inductive_approach_GLn}
Tyrone Crisp, Ehud Meir, and Uri Onn.
\newblock An inductive approach to representations of general linear groups
  over compact discrete valuation rings.
\newblock {\em Advances in Mathematics}, 440, 2024.

\bibitem[CO25]{Chan_Oi_2025_GreenFunc}
Charlotte Chan and Masao Oi.
\newblock Green functions for positive-depth {D}eligne--{L}usztig induction.
\newblock {\em arXiv preprint arXiv:2506.04449}, 2025.

\bibitem[CS17]{ChenStasinski_2016_algebraisation}
Zhe Chen and Alexander Stasinski.
\newblock The algebraisation of higher {D}eligne--{L}usztig representations.
\newblock {\em Selecta Math. (N.S.)}, 23(4):2907--2926, 2017.

\bibitem[CS26]{ChenStasinski_2023_algebraisation_II}
Zhe Chen and Alexander Stasinski.
\newblock The algebraisation of higher level {D}eligne--{L}usztig
  representations {II}: odd levels.
\newblock {\em Advances in Mathematics}, 498:111031, 2026.

\bibitem[DL76]{DL1976}
Pierre Deligne and George Lusztig.
\newblock Representations of reductive groups over finite fields.
\newblock {\em Ann. of Math. (2)}, 103(1):103--161, 1976.

\bibitem[DM20]{DM_book_2nd_edition}
Fran\c{c}ois Digne and Jean Michel.
\newblock {\em Representations of finite groups of {L}ie type}, volume~95 of
  {\em London Mathematical Society Student Texts}.
\newblock Cambridge University Press, Cambridge, second edition, 2020.

\bibitem[FKS23]{Fintzen_Kaletha_Spice_twistedYu_DMJ_2023}
Jessica Fintzen, Tasho Kaletha, and Loren Spice.
\newblock A twisted {Yu} construction, {Harish-Chandra} characters, and
  endoscopy.
\newblock {\em Duke Mathematical Journal}, 172(12):2241--2301, 2023.

\bibitem[G{\'e}r73]{Gerardin1973GL_n}
Paul G{\'e}rardin.
\newblock Sur les repr\'esentations du groupe lin\'eaire g\'en\'eral sur un
  corps {$p$}-adique.
\newblock In {\em S\'eminaire {D}elange-{P}isot-{P}oitou (14e ann\'ee:
  1972/73), {T}h\'eorie des nombres, {F}asc. 1, {E}xp. {N}o. 12}, page~24.
  Secr\'etariat Math\'ematique, Paris, 1973.

\bibitem[G{\'e}r77]{Gerardin_1977_Weil}
Paul G{\'e}rardin.
\newblock Weil representations associated with finite fields.
\newblock {\em Journal of Algebra}, 46:54--101, 1977.

\bibitem[GM20]{Geck_Malle_2020book}
Meinolf Geck and Gunter Malle.
\newblock {\em The character theory of finite groups of {L}ie type}, volume 187
  of {\em Cambridge Studies in Advanced Mathematics}.
\newblock Cambridge University Press, Cambridge, 2020.

\bibitem[Gre55]{Green_1955}
James~A. Green.
\newblock The characters of the finite general linear groups.
\newblock {\em Trans. Amer. Math. Soc.}, 80:402--447, 1955.

\bibitem[Gre61]{Greenberg19611}
Marvin~J. Greenberg.
\newblock Schemata over local rings.
\newblock {\em Ann. of Math. (2)}, 73:624--648, 1961.

\bibitem[Gre63]{Greenberg19632}
Marvin~J. Greenberg.
\newblock Schemata over local rings. {II}.
\newblock {\em Ann. of Math. (2)}, 78:256--266, 1963.

\bibitem[Hil93]{Hill_1993_Jordan}
Gregory Hill.
\newblock A {J}ordan decomposition of representations for
  {$\mathrm{GL}_n(\mathcal{O})$}.
\newblock {\em Comm. Algebra}, 21(10):3529--3543, 1993.

\bibitem[Hil94]{Hill_1994_nilpotent}
Gregory Hill.
\newblock On the nilpotent representations of
  {${\mathrm{GL}}_n({\mathcal{O}})$}.
\newblock {\em manuscripta math.}, 82(3-4):293--311, 1994.

\bibitem[Hil95a]{Hill_1995_Regular}
Gregory Hill.
\newblock Regular elements and regular characters of
  {${\mathrm{GL}}_n({\mathcal{O}})$}.
\newblock {\em J. Algebra}, 174(2):610--635, 1995.

\bibitem[Hil95b]{Hill_1995_semisimple}
Gregory Hill.
\newblock Semisimple and cuspidal characters of
  {${\mathrm{GL}}_n({\mathcal{O}})$}.
\newblock {\em Comm. Algebra}, 23(1):7--25, 1995.

\bibitem[HS19]{Hasa_Stasinski_2019_trans_AMS}
Jokke H\"as\"a and Alexander Stasinski.
\newblock Representation growth of compact linear groups.
\newblock {\em Trans. Amer. Math. Soc.}, 372(2):925--980, 2019.

\bibitem[HS22]{Hassain_Singla_2022_ADV}
M.~Hassain and Pooja Singla.
\newblock Representation growth of compact special linear groups of degree two.
\newblock {\em Adv. Math.}, 396:Paper No. 108164, 61, 2022.

\bibitem[Hum78]{Humphreys_intro_Lie_RepThy}
James~E. Humphreys.
\newblock {\em Introduction to {L}ie algebras and representation theory},
  volume~9 of {\em Graduate Texts in Mathematics}.
\newblock Springer-Verlag, New York-Berlin, 1978.
\newblock Second printing, revised.

\bibitem[Isa06]{Isaacs_CharThy_Book}
I.~Martin Isaacs.
\newblock {\em Character theory of finite groups}.
\newblock AMS Chelsea Publishing, Providence, RI, 2006.

\bibitem[JW24]{Jing_Wu_2024_J_Alg}
Naihuan Jing and Yu~Wu.
\newblock Characters of $\mathrm{GL}_n(\mathbb{F}_q)$ and vertex operators.
\newblock {\em Journal of Algebra}, 653:109--132, 2024.

\bibitem[KOS18]{Krakovski_Onn_Singla_regularchar_2018}
Roi Krakovski, Uri Onn, and Pooja Singla.
\newblock Regular characters of groups of type {$A_n$} over discrete valuation
  rings.
\newblock {\em J. Algebra}, 496:116--137, 2018.

\bibitem[Kut73]{kutzko1973characters}
Philip~C. Kutzko.
\newblock The characters of the binary modular congruence group.
\newblock {\em Bull. Amer. Math. Soc.}, 79(4):702--704, 1973.

\bibitem[LS77]{Lusztig_Srinivasan_char_finite_unitary_gp_1977}
George Lusztig and Bhama Srinivasan.
\newblock The characters of the finite unitary groups.
\newblock {\em J. Algebra}, 49(1):167--171, 1977.

\bibitem[Lus76]{Lusztig_1976_finiteness_unipotent_classes}
G.~Lusztig.
\newblock On the finiteness of the number of unipotent classes.
\newblock {\em Invent. Math.}, 34(3):201--213, 1976.

\bibitem[Lus84]{Lusztig_84_OrangeBook}
George Lusztig.
\newblock {\em Characters of reductive groups over a finite field}, volume 107
  of {\em Annals of Mathematics Studies}.
\newblock Princeton University Press, Princeton, NJ, 1984.

\bibitem[Lus88]{Lusztig_1988_Rep_red_gp_disconnectedcentre}
George Lusztig.
\newblock On the representations of reductive groups with disconnected centre.
\newblock In {\em {O}rbites unipotentes et repr\'esentations - I. {G}roupes
  finis et {A}lg\`ebres de {H}ecke}, number 168 in {A}st\'erisque, pages
  157--166. {S}oci\'et\'e math\'ematique de {F}rance, 1988.

\bibitem[Lus04]{Lusztig2004RepsFinRings}
George Lusztig.
\newblock Representations of reductive groups over finite rings.
\newblock {\em Represent. Theory}, 8:1--14, 2004.

\bibitem[Mon23]{monteiro2023stable}
Nariel Monteiro.
\newblock The stable representations of $\mathrm{GL}_{N}$ over finite local
  principal ideal rings.
\newblock {\em Communications in Algebra}, 51(6):2628--2643, 2023.

\bibitem[Nag76]{Nagornyj_1976_GL2Zpn}
S.~V. Nagornyj.
\newblock Complex representations of the group
  {$GL(2,\mathbb{Z}/p^n\mathbb{Z})$}.
\newblock {\em Zap. Nau\v{c}n. Sem. Leningrad. Otdel. Mat. Inst. Steklov.
  (LOMI)}, 64:95--103, 161, 1976.
\newblock English transl.: J. Soviet Math. \textbf{17} (1981), 1777--1783.

\bibitem[Nag78]{Nagornyi_1978_GL3}
S.~V. Nagornyj.
\newblock Complex representations of the general linear group of degree three
  modulo a power of a prime.
\newblock {\em Zap. Nau\v{c}n. Sem. Leningrad. Otdel. Mat. Inst. Steklov.
  (LOMI)}, 75:143--150, 197--198, 1978.

\bibitem[Nav18]{Navarro_2018_book_McKayConj}
Gabriel Navarro.
\newblock {\em Character theory and the {M}c{K}ay conjecture}, volume 175 of
  {\em Cambridge Studies in Advanced Mathematics}.
\newblock Cambridge University Press, Cambridge, 2018.

\bibitem[Nob77]{Nobs_1977_GL2}
Alexandre Nobs.
\newblock {Die irreduziblen Darstellungen von $GL_2(\mathbb{Z}_p)$,
  insbesondere $GL_2(\mathbb{Z}_2)$}.
\newblock {\em Math. Ann.}, 229(2):113--133, 1977.

\bibitem[NW74]{Nobs_Wolfart_theta_I_1974}
Alexandre Nobs and J\"urgen Wolfart.
\newblock Darstellungen von
  {${\mathrm{SL}}(2,{\mathbf{Z}}/p^{\lambda}{\mathbf{Z}})$} und
  {T}hetafunktionen. {I}.
\newblock {\em Math. Z.}, 138:239--254, 1974.

\bibitem[Onn08]{Onn_AdvMath_2008}
Uri Onn.
\newblock Representations of automorphism groups of finite
  {$\mathfrak{o}$}-modules of rank two.
\newblock {\em Adv. Math.}, 219(6):2058--2085, 2008.

\bibitem[OPS25]{OnnPrasadSingla_2025_zetaA2poschar}
Uri Onn, Amritanshu Prasad, and Pooja Singla.
\newblock Representation zeta functions of groups of type {$A_2$} in positive
  characteristic.
\newblock {\em Int. Math. Res. Not. IMRN}, 2025(2):Paper No. rnae270, 2025.

\bibitem[SS17]{Stasinski_Stevens_2016_regularRep}
Alexander Stasinski and Shaun Stevens.
\newblock The regular representations of {${\mathrm GL}_N$} over finite local
  principal ideal rings.
\newblock {\em Bull. Lond. Math. Soc.}, 49(6):1066--1084, 2017.

\bibitem[Sta09a]{Sta2009smooth}
Alexander Stasinski.
\newblock The smooth representations of {${\mathrm{GL}}_2(\mathfrak{o})$}.
\newblock {\em Comm. Algebra}, 37(12):4416--4430, 2009.

\bibitem[Sta09b]{Sta2009Unramified}
Alexander Stasinski.
\newblock Unramified representations of reductive groups over finite rings.
\newblock {\em Represent. Theory}, 13:636--656, 2009.

\bibitem[Sta11]{Sta2011ExtendedDL}
Alexander Stasinski.
\newblock Extended {D}eligne-{L}usztig varieties for general and special linear
  groups.
\newblock {\em Adv. Math.}, 226(3):2825--2853, 2011.

\bibitem[Sta17]{Stasinski_2016_survey}
Alexander Stasinski.
\newblock Representations of {$\mathrm{GL}_N$} over finite local principal
  ideal rings: an overview.
\newblock In {\em Around {L}anglands correspondences}, volume 691 of {\em
  Contemp. Math.}, pages 337--358. Amer. Math. Soc., Providence, RI, 2017.

\bibitem[Wol75]{Wolfart_theta_II_1975}
J\"urgen Wolfart.
\newblock Darstellungen von
  {${\mathrm{SL}}(2,{\mathbf{Z}}/p^{\lambda}{\mathbf{Z}})$} und
  {T}hetafunktionen. {II}.
\newblock {\em Manuscripta Math.}, 17(4):339--362, 1975.

\bibitem[Yu01]{Yu_2001_JAMS}
Jiu-Kang Yu.
\newblock Construction of tame supercuspidal representations.
\newblock {\em J. Amer. Math. Soc.}, 14(3):579--622, 2001.

\bibitem[Zel81]{Zelevinsky_1981_bk}
Andrey~V. Zelevinsky.
\newblock {\em Representations of finite classical groups}, volume 869 of {\em
  Lecture Notes in Mathematics}.
\newblock Springer-Verlag, Berlin-New York, 1981.
\newblock A Hopf algebra approach.

\end{thebibliography}

\end{document}